\newtheorem{thm}{Theorem}[section]
\newtheorem{lemma}[thm]{Lemma}
\newtheorem{prop}[thm]{Proposition}
\newtheorem{cor}[thm]{Corollary}
\theoremstyle{definition}
\renewcommand{\subsubsection}[1] {\smallskip \noindent {\bf #1.}}
\newcommand{\reffig}[1] {\textsc{\ref{#1}}}
\newcommand{\cA}{\mathcal{A}}
\newcommand{\cU}{\mathcal{U}}
\newcommand{\cW}{\mathcal{W}}
\newcommand{\cP}{\mathcal{P}}
\newcommand{\cC}{\mathcal{C}}
\newcommand{\cO}{\mathcal{O}}
\newcommand{\cB}{\mathcal{B}}
\newcommand{\N}{\mathbb{N}}
\newcommand{\Z}{\mathbb{Z}}
\newcommand{\E}{\mathbb{E}}
\newcommand{\1}{\mathbbm{1}}
\newcommand{\intB}{\partial_{\bullet}}
\newcommand{\extB}{\partial_{\circ}}
\newcommand{\intextB}{\partial_{\ins \out}}
\newcommand{\ins}{\bullet}
\newcommand{\out}{\circ}
\newcommand{\up}{\,\uparrow\,}
\def\zero{\bold{0}}
\DeclareMathOperator\rev{rev}
\newcommand{\Even}{\mathrm{Even}}
\newcommand{\Odd}{\mathrm{Odd}}
\newcommand{\MC}{\mathrm{MC}}
\DeclareMathOperator\dist{dist}
\DeclareMathOperator\cut{\mathcal{S}}
\DeclareMathOperator{\Ocut}{\mathcal{S}}
\newcommand{\OC}[1]{{\Ocut}_{#1}}
\newcommand{\OCC}[2]{\mathrm{OddCut}_{#1}(#2)}
\newcommand*\rel@kern[1]{\kern#1\dimexpr\macc@kerna}
\newcommand*\widebar[1]{%
  \begingroup
  \def\mathaccent##1##2{%
    \rel@kern{0.8}%
    \overline{\rel@kern{-0.8}\macc@nucleus\rel@kern{0.2}}%
    \rel@kern{-0.2}%
  }%
  \macc@depth\@ne
  \let\math@bgroup\@empty \let\math@egroup\macc@set@skewchar
  \mathsurround\z@ \frozen@everymath{\mathgroup\macc@group\relax}%
  \macc@set@skewchar\relax
  \let\mathaccentV\macc@nested@a
  \macc@nested@a\relax111{#1}%
  \endgroup
}
\title{The growth constant of odd cutsets in high dimensions}
\date{\today}
\author{Ohad Noy Feldheim\textsuperscript{\dag}}
\address{\dag\, Stanford University.
    Department of Mathematics.
    Stanford, CA 94305, U.S.A.}
\author{Yinon Spinka\textsuperscript{\ddag}}
\address{\ddag\, Tel Aviv University.
    School of Mathematical Sciences.
    Tel Aviv, 69978, Israel.}
    \email{ohadf@netvision.net.il, yinonspi@post.tau.ac.il}
\thanks{Research of Y.S. supported by Israeli Science Foundation grant 861/15, the European Research Council starting grant 678520 (LocalOrder), and the Adams Fellowship Program of the Israel Academy of Sciences and Humanities}
\begin{document}

\begin{abstract}
A cutset is a non-empty finite subset of $\Z^d$ which is both connected and co-connected. A cutset is odd if its vertex boundary lies in the odd bipartition class of $\Z^d$.
Peled~\cite{peled2010high} suggested that the number of odd cutsets which contain the origin and have $n$ boundary edges may be of order $e^{\Theta(n/d)}$ as $d \to \infty$, much smaller than the number of general cutsets, which was shown by Lebowitz and Mazel~\cite{lebowitz1998improved} to be of order $d^{\Theta(n/d)}$.
In this paper, we verify this by showing that the number of such odd cutsets is $(2+o(1))^{n/2d}$.
\end{abstract}

\maketitle

\section{Introduction and results}

We consider the integer lattice $\Z^d$ as a graph with nearest-neighbor adjacency, i.e., the edge set is the set of $\{u,v\}$ such that $u$ and $v$ differ by one in exactly one coordinate.
The \emph{edge-boundary} of a subset $U$ of $\Z^d$ is the set of edges having exactly one endpoint in $U$, and the \emph{internal vertex-boundary} of $U$ is the set of vertices in $U$ which are adjacent to some vertex outside $U$.

A \emph{cutset} is a non-empty finite subset of $\Z^d$ which is both connected and co-connected (i.e., both it and its complement span connected subgraphs).
The edge-boundaries of cutsets are exactly the finite minimal edge-cuts of $\Z^d$, i.e., finite minimal sets of edges whose removal disconnects $\Z^d$.
A vertex of $\Z^d$ is called {\em odd (even)} if it is at odd (even) graph-distance from the origin, and a subset of $\Z^d$ is called odd (even) if its internal vertex-boundary consists solely of odd (even) vertices.
In this work, we study $\OCC{n}{d}$, the number of odd cutsets in $\Z^d$ with edge-boundary size $n$ which contain the origin. Random samples of such sets are depicted in Figure~\ref{fig:oddcut-sample}. Our main result is the following.

\begin{thm}\label{thm:main}
	There exists a constant $C>0$ such that for any integer $d \ge 2$ and any sufficiently large multiple $n$ of $2d$, we have
	\[ 2^{\frac{n}{2d} \big(1 + 2^{-2d} \big)} \le \OCC{n}{d} \le 2^{\frac{n}{2d} \big(1 + \frac{C \log^{3/2} d}{\sqrt{d}} \big)} .\]
\end{thm}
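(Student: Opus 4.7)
My plan is to first reformulate the problem using the rigidity of the odd-cutset condition. If $v \in U \cap \Even$, then $v \notin \partial_\bullet U$, forcing all $2d$ (odd) neighbors of $v$ into $U$; dually, if $u \in U^c \cap \Odd$, then all $2d$ (even) neighbors of $u$ lie in $U^c$. Consequently $U$ is entirely determined by its even part $E := U \cap \Even$ via $U = E \cup \cN(E)$, where $\cN(E) \subseteq \Odd$ is the set of odd neighbors of $E$. Introducing the graph $G$ on $\Even$ in which two vertices are adjacent iff they share an odd neighbor, one checks that $U$ is connected iff $E$ is $G$-connected, and a direct count gives $|\partial U| = 2d(|\cN(E)| - |E|)$. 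Writing $m := n/(2d)$, the theorem reduces to counting $G$-connected subsets $E \ni 0$ with $|\cN(E)| - |E| = m$ (up to the low-order correction of enforcing co-connectedness of $U^c$).

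For the lower bound I would use a baseline-plus-perturbations construction. Fix a highly structured baseline $E_0$, for instance a long ``even staircase'' along one coordinate, with $m_0 := |\cN(E_0)| - |E_0|$ slightly below $m$, chosen so that there is a large family $V$ of candidate attachments $v \notin E_0$ each satisfying $|\cN(v) \setminus \cN(E_0)| = 2$ and whose ``new'' odd neighbors are pairwise disjoint across $V$. Then for every subset $S \subseteq V$ of size $m - m_0$, the set $E_0 \cup S$ is $G$-connected with $|\cN(E_0 \cup S)| - |E_0 \cup S| = m$, producing $\binom{|V|}{m-m_0}$ distinct cutsets. Tuning $E_0$ so that $|V| \approx m(1 + 2^{-2d})$ and choosing $|S| \approx |V|/2$ gives the target $2^{m(1 + 2^{-2d})}$ after absorbing a polynomial Stirling factor into the $2^{-2d}$ slack.

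The upper bound is the technical heart of the theorem. I plan to encode each $E$ canonically via an exploration $v_1 = 0, v_2, \ldots, v_{|E|}$ along a $G$-spanning tree rooted at $0$. Setting $\delta_i := 2d - |\cN(v_i) \cap \cN(E_{i-1})|$, the constraint reads $\sum_i \delta_i = 2d + m$, so typical $\delta_i$ must be tiny. In the \emph{generic} case $\delta_i \in \{0,1\}$, the vertex $v_i$ is almost determined by the preceding data (essentially by choosing a reflection through the single ``new'' odd neighbor), giving roughly two choices per step and producing the base $2$ in the exponent. The main obstacle, where I expect to spend the bulk of the technical effort, is controlling the \emph{defect} steps with $\delta_i \ge 2$: a naive per-defect bound loses a factor of order $d$, which would ruin the exponent. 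I would prove an isoperimetric-type estimate in $G$, exploiting the concentration of Hamming-ball sizes in high dimensions, to bound the total number of defects by $O(m\sqrt{\log d/d})$, and to show that each defect costs at most $O(\log d)$ bits in the encoding. Combining these yields the claimed $O(\log^{3/2} d / \sqrt{d})$ correction to the exponent.
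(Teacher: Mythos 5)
Your reduction to counting connected even sets $E$ with $|N(E)|-|E| = m := n/2d$ is sound (it is the paper's Lemma~\ref{lem:odd-set-boundary-size} plus the identity $U=(\Even\cap U)^+$ for regular odd sets), but both halves of the argument built on it have genuine gaps. For the lower bound, the perturbation scheme cannot reach even $2^{m}$. If every attachment $v\in V$ has $|N(v)\setminus N(E_0)|=2$, then $v$ has $2d-2$ odd neighbours inside $N(E_0)$, and counting edges between $\Even$ and $N(E_0)$ gives $2d|E_0|+(2d-2)|V|\le 2d|N(E_0)|$, i.e.\ $|V|\le \tfrac{d}{d-1}\,m_0$. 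Since each attachment raises the parameter by $1$, taking $|S|=|V|/2$ forces $m=m_0+|V|/2\ge \tfrac{3(d-1)}{2d}|V|$, so $\binom{|V|}{|V|/2}\le 2^{|V|}\le 2^{\frac{2d}{3(d-1)}m}$ --- strictly below the main term. (Your description is also internally inconsistent: $m_0$ ``slightly below $m$'' contradicts $|S|\approx|V|/2\approx m/2$.) The construction that works uses attachments with exactly \emph{one} new odd neighbour, so that the boundary size is unchanged and \emph{all} $2^{|V|}$ subsets are admissible; even then $|V|$ is only about $m$, and the $2^{-2d}$ gain must come from a second layer of perturbations whose availability depends on the first layer, handled by a first-moment/Jensen argument over a random first layer. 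It cannot be obtained by ``tuning $|V|$'' above $m$.

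For the upper bound, the exploration encoding does not control the number of binary decisions. First, $\sum_i\delta_i=|N(E)|=m+|E|$, not $2d+m$; the constraint is $\sum_i(\delta_i-1)=m$, and the number of steps is $|E|$, which is \emph{not} bounded in terms of $m$ (a ball of radius $R$ has $|E|\sim R^d$ while $m\sim R^{d-1}$). So ``roughly two choices per generic step'' yields $2^{|E|}$, not $2^{m}$. The essential missing point is that all but roughly $m$ steps must be shown to be \emph{forced}: a vertex with $\delta_i=0$ is forced into $E$ by regularity, but a vertex with $\delta_i=1$ is a genuine binary choice, and you have no bound on how many such vertices occur. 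The paper's resolution is exactly this accounting: after fixing a $t$-approximation, the undetermined vertices on the boundary form a minimal vertex cover of the unknown region $A_*$, each such vertex carries at least $2d-t$ boundary edges, so there are at most $n/(2d-t)$ binary choices (Lemmas~\ref{lem:D-properties}--\ref{lem:tightness}), and a weighted count over minimal covers gives the base $2$. Your proposed defect bound $O(m\sqrt{\log d/d})$ with $O(\log d)$ bits per defect is the right order of magnitude, but you give no mechanism for it; in the paper this is the content of the two-stage approximation construction (Lemmas~\ref{lem:family-of-rough-approx} and~\ref{lem:family-of-full-approx}), which relies on the four-cycle property of odd sets, a double-counting bound on $N_t$, Lov\'asz's fractional cover theorem, and connectivity of $\intextB S$ --- the last of which uses co-connectedness, a hypothesis you discard as a ``low-order correction'' but which is actually needed to enumerate the candidate separating sets as connected subgraphs.
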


We further prove the existence of a growth constant for the number of odd cutsets.

\begin{thm}\label{thm:limit-exists}
	For any integer $d \ge 2$, the limit $\mu(d) := \displaystyle\lim_{n \to \infty} \OCC{2dn}{d}^{1/n}$ exists.
\end{thm}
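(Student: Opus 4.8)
The plan is to derive Theorem~\ref{thm:limit-exists} from an approximate supermultiplicativity of $n\mapsto\OCC{2dn}{d}$ together with a Fekete-type lemma. Write $b_n:=\log\OCC{2dn}{d}$, which is finite and positive for all $n\ge n_0(d)$ by Theorem~\ref{thm:main}. I would aim to prove that there are constants $k_0=k_0(d)\in\N$ and $K=K(d)<\infty$ with
\[
b_{m+n+k_0}\ \ge\ b_m+b_n-K\log(m+n)\qquad\text{for all }m,n\ge n_0(d).
\]
Granting this, $\lim_{n\to\infty}b_n/n$ exists by a standard approximate subadditivity (de Bruijn--Erd\"os--Hammersley type) lemma: the error term $K\log(m+n)$ is nondecreasing and summable against $(m+n)^{-2}$, and the bounded shift $k_0$ is readily absorbed by reindexing. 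Theorem~\ref{thm:main} confines $\OCC{2dn}{d}^{1/n}$ to a bounded interval of positive reals, so the limit, once it exists, is finite and positive; this is exactly the assertion that $\mu(d)=e^{\lim_n b_n/n}$ exists.

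The heart of the matter is thus a gluing construction: a finite-to-one map $\Phi$ sending a pair of odd cutsets $(C_1,C_2)$ through the origin, with edge-boundaries $2dm$ and $2dn$, to a single odd cutset through the origin with edge-boundary $2d(m+n+k_0)$. Fix the coordinate direction $e_1$, and let $p$ be the vertex of $C_1$ of largest first coordinate (ties broken lexicographically) and $q$ the vertex of $C_2$ of smallest first coordinate; by the definition of an odd set, $p$ and $q$ are odd vertices on the respective internal vertex-boundaries. Translate $C_2$ by an \emph{even} vector $v$—so $C_2+v$ is again an odd cutset—chosen so that $q+v$ lies a bounded distance (depending only on $d$) in front of $p$ in the $e_1$ direction and agrees with $p$ in each remaining coordinate up to an error of at most one. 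By the choice of $p$ and $q$, the sets $C_1$ and $C_2+v$ then lie on opposite sides of a slab $\{s_1\le x_1\le s_2\}$ that they do not meet. Inside this empty slab I join $C_1$ to $C_2+v$ by a \emph{corridor}: a width-one path will not do, since its vertices alternate parity and hence it is not an odd set, so the corridor is instead a chain of overlapping "plus shapes" $\{u\}\cup\{\text{neighbours of }u\}$ with $u$ even (completed by any even vertex all of whose neighbours lie in the chain), which \emph{is} a connected odd set whose internal vertex-boundary consists only of odd vertices; its first plus shape is arranged to meet $C_1$ exactly in $p$, its last to meet $C_2+v$ exactly in $q+v$, and a short bend in the slab absorbs the unit discrepancies in the remaining coordinates. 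One then verifies that $C:=C_1\cup(\text{corridor})\cup(C_2+v)$ is connected, that every even vertex of $C$ is surrounded by $C$ (so $C$ is odd), and that $\Z^d\setminus C$ is connected—removing a thin tube from the complement of two finite co-connected sets preserves connectivity when $d\ge2$. Since the number of boundary edges of an odd cutset $C$ equals $2d\bigl(|C\cap\Odd|-|C\cap\Even|\bigr)$, and the corridor meets each of $C_1,C_2+v$ in a single vertex, the edge-boundary of $C$ is $2dm+2dn$ plus a contribution depending only on the finitely many possible local configurations of $C_1$ near $p$ and of $C_2$ near $q$; padding the corridor by a bounded, configuration-dependent number of suitable gadgets brings this contribution to the single value $2dk_0$.

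It remains to control the multiplicity of $\Phi$. From $C=\Phi(C_1,C_2)$ one recovers $(C_1,C_2,v)$ once one locates the inserted corridor inside $C$; as the corridor is one of finitely many fixed finite patterns, it occurs in $C$ in at most $\mathrm{poly}(|C|)$ positions, and $|C|\le\mathrm{poly}(m+n)$ by the edge-isoperimetric inequality in $\Z^d$, so $\Phi$ is at most $\mathrm{poly}(m+n)$-to-one. This yields $\OCC{2d(m+n+k_0)}{d}\ge\mathrm{poly}(m+n)^{-1}\,\OCC{2dm}{d}\,\OCC{2dn}{d}$, i.e.\ the displayed inequality for $b_n$. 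I expect the genuinely delicate point to be this multiplicity control: a cutset may itself contain long thin corridor-like regions that could be confused with the one inserted by $\Phi$, so the corridor has to be pinned down canonically—e.g.\ by exploiting that it was attached at coordinate-extremal vertices and that it joins its two neighbouring pieces each through a single cut vertex, or by decorating it with a small distinguished gadget. The remaining, more mechanical, work is the bookkeeping of the previous paragraph: checking that connectivity, co-connectivity, and the oddness of the internal vertex-boundary survive each step of the surgery, that no uncontrolled "buried" even vertices are created at the junctions, and that the configuration-dependent padding can indeed realise the single increment $2dk_0$.
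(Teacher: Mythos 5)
Your high-level strategy is the same as the paper's: prove an approximate supermultiplicativity of $\OCC{2dn}{d}$ by a gluing map with polynomially controlled multiplicity, then invoke a Fekete-type lemma. Where you differ from the paper is in the gluing gadget and in where the polynomial loss is paid. You attach $C_1$ and the translate $C_2+v$ across a separating hyperplane $\{x_1 = c\}$ using an explicit ``plus-shape'' corridor, and you bound the multiplicity by searching $C$ for the inserted pattern. The paper's Proposition~\ref{prop:almost-super-mult} instead first modifies each cutset to have a \emph{peak}, a unique maximal even vertex $u$ in the order induced by $x_1+x_2$. A peaked set lies entirely in $\{x_1+x_2\le u_1+u_2+1\}$, so two such sets (one reflected) can be abutted directly along that hyperplane with no corridor at all; since the peak is canonically recoverable, that merge step is exactly injective. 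The paper's polynomial loss is then paid in an earlier, cleaner step (translating so the peak sits at the origin costs a factor $|S|\le (n/2d)^{d/(d-1)}$ by Lemma~\ref{lem:isoperimetry}), localizing the translation ambiguity away from the gluing, whereas in your scheme the same poly factor arises from enumerating candidate corridor positions and translation vectors inside $C$. Both are adequate, and you correctly identify multiplicity control as the delicate point; note that your own observation that the corridor occurs in at most $\mathrm{poly}(|C|)$ positions already suffices, so you do not actually need the corridor to be ``pinned down canonically''.

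Two simplifications worth pointing out. First, with $p$ and $q$ the coordinate-extremal odd vertices you chose, one may always take $v := p+2e_1-q$, which is automatically even and places $q+v=p+2e_1$; then $C_1\subset\{x_1\le p_1\}$ and $C_2+v\subset\{x_1\ge p_1+2\}$, and the single plus-shape $(p+e_1)^+$ already meets each of $C_1$ and $C_2+v$ in exactly one (odd) vertex. This removes the need for bends or ``error of at most one'' and makes the boundary increment constant: by Lemma~\ref{lem:odd-set-boundary-size}, $|\partial C|=|\partial C_1|+|\partial C_2|+2d(2d-3)$, i.e.\ $k_0=2d-3$, so the configuration-dependent padding can be dispensed with entirely. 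Second, the co-connectivity of $C$ does require an argument (the paper checks this explicitly in the merge step), but since $C_1$ and $C_2+v$ are finite, co-connected, and strictly separated by the layer $\{x_1=p_1+1\}$, every vertex of $C^c$ reaches the unbounded component by first exiting to that layer inside $C_1^c$ (or $(C_2+v)^c$) and then routing around the finite set, so the claim holds for all $d\ge 2$.
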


The existence of the above limit is proven via a super-multiplicitivity argument. It follows from Theorem~\ref{thm:main} that $\mu(d)$ satisfies the following bounds:
\[ 1+2^{-2d} \le \log_2 \mu(d) \le 1+\frac{C \log^{3/2}d}{\sqrt{d}} .\]
We remark that the divisibility condition on $n$ in the theorems is essential as the size of the edge-boundary of an odd set in $\Z^d$ is always a multiple of $2d$ (see Lemma~\ref{lem:odd-set-boundary-size} below).

The lower bound in Theorem~\ref{thm:main} is obtained with relative ease, by estimating the number of odd cutsets which are obtained as local fluctuations of a single set, bearing resemblance to a $(d-1)$-dimensional cube. Note that some restriction on the minimum value of $n$ is necessary, since any odd cutset $S$ in $\Z^d$ which contains the origin has at least $2d(2d-1)$ boundary edges (see Corollary~\ref{cor:odd-set-min-boundary-size} below).

The upper bound in Theorem~\ref{thm:main}, which is the main result of this paper, is obtained by a more involved method.
It is based on the intuition that the primary phenomenon which accounts for the number of odd cutsets is the great variety of possible local structures near the boundary. In other words, every odd cutset can be obtained as a perturbation of one of a relatively small number of global shapes. Thus, the proof of the upper bound is based on a classification of odd cutsets according to their approximate global structure, which we call an approximation. We first show that the number of different approximations is small and then provide tight bounds on the number of regular odd sets corresponding to each approximation and use it to bound the total number of odd cutsets.
This general method of approximations goes back to Sapozhenko~\cite{sapozhenko1987onthen}. Similar methods were used also by Peled~\cite{peled2010high}, by Galvin and Kahn~\cite{galvin2004phase} and by the authors~\cite{feldheimspinka}. The proof given here relies on ideas from~\cite{sapozhenko1987onthen} and follows the approach of~\cite{galvin2004phase} with simplifications introduced in~\cite{feldheimspinka} (in a more complex setting).
As it requires no additional effort, we prove the upper bound under weaker connectivity assumptions than those used in the definition of a cutset (see Theorem~\ref{thm:strong-upper}).

We remark that although Theorem~\ref{thm:main} is stated (and has meaningful content) for all $d \ge 2$, the bounds become crude when $d$ is small, in which case a similar upper bound could be obtained from the bound in~\cite{lebowitz1998improved} for general cutsets.

\begin{figure}
	\centering
	\begin{subfigure}[t]{.5\textwidth}
		\centering
		\includegraphics[scale=0.036]{oddcut-sample1.pdf}

		\caption{$n=3000$}
	\end{subfigure}%
	\begin{subfigure}[t]{.5\textwidth}
		\centering
		\includegraphics[scale=0.12]{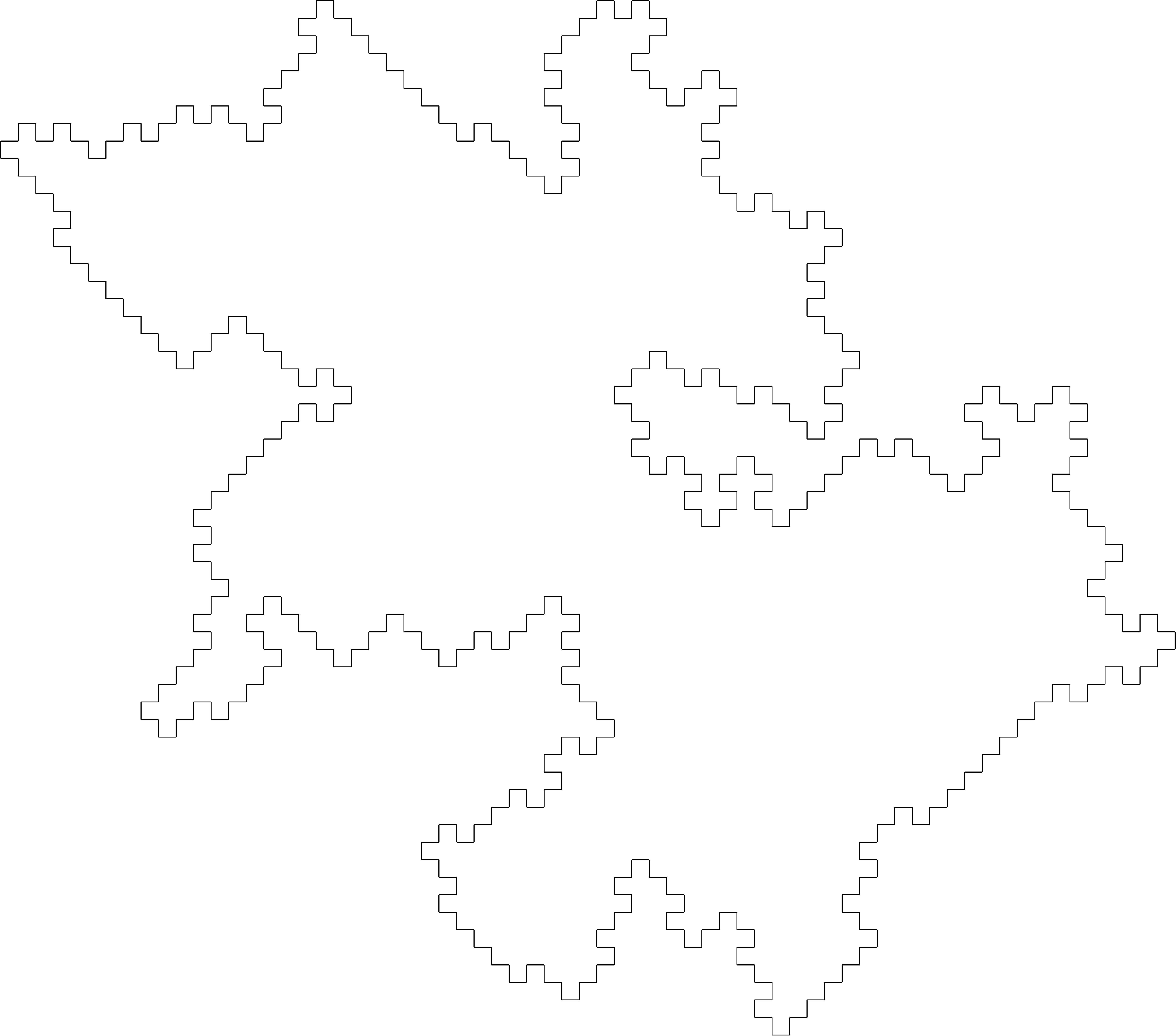}

		\caption{$n=600$}
	\end{subfigure}
	\caption{Samples of random odd cutsets in $\Z^2$ with $n$ boundary edges.}
	\label{fig:oddcut-sample}
\end{figure}

\subsection{Discussion}

In 1988, Lebowitz and Mazel~\cite{lebowitz1998improved} investigated general
cutsets in $\Z^d$ (which they refer to as \emph{primitive Peierls contours}).
They showed that the number of cutsets with boundary size $n$ which contain the origin is at most $d^{64n/d}$ when $d \ge 2$, and used this to show that the low-temperature expansion for the $d$-dimensional Ising model, written in terms of Peierls contours, converges when the inverse-temperature is at least $64(\log d)/d$.
Ten years later, Balister and Bollob\'as~\cite{balister2007counting} improved this result by reducing the aforementioned bound on the number of cutsets to $(8d)^{2n/d}$. They also proved that the number of such sets is bounded from below by $(cd)^{n/2d}$.

Odd cutsets have been used in various probabilistic models to obtain phase transition and torpid mixing results. Some of these include works on the hard-core model by Galvin~\cite{galvin2008sampling}, Galvin--Kahn~\cite{galvin2004phase}, Galvin--Tetali~\cite{galvin2004slow,galvin2006slow} and Peled--Samotij~\cite{peled2014odd}, on homomorphism height functions by Galvin~\cite{Galvin2003hammingcube} and on 3-colorings by Galvin~\cite{galvin2007sampling}, Galvin--Randall~\cite{galvin2007torpid}, Galvin--Kahn--Randall--Sorkin~\cite{galvin2012phase} and Peled~\cite{peled2010high} (who also treated discrete Lipschitz functions).
Recently, using a generalization of odd cutsets, the authors showed that the 3-state antiferromagnetic Potts model in high dimensions undergoes a phase transition at positive temperature.

Peled~\cite{peled2010high} raised the question of whether or not the number of odd cutsets is of smaller order of magnitude than the total number of cutsets. Namely, he asked whether this quantity is of order $d^{\Theta(n/d)}$ or only of order $e^{\Theta(n/d)}$. Theorem~\ref{thm:main} resolves this question by showing that it is indeed the latter and pinpointing the constant in the exponent, i.e., $(2+o(1))^{n/2d}$.

It is worthwhile to mention that the method of approximations, which we use to obtain our upper bound, played a role in many of the aforementioned works. This method goes back to Sapozhenko who studied enumeration problems on bipartite graphs and posets~\cite{sapozhenko1987onthen,sapozhenko1989number,sapozhenko1991number} motivated by previous results of Korshunov on antichains~\cite{korshunov1981number} and of Korshunov--Sapozhenko on binary codes~\cite{Korshunov1983Th}.

In addition to cutsets, other types of connected subgraphs of $\Z^d$ have also been investigated. In this context, we mention the recent work of Miranda--Slade~\cite{miranda2010growth} who obtained estimates for the growth constant of lattice trees and lattice animals in high dimensions.

\subsection{Open problems}

The bounds obtained in Theorem~\ref{thm:main} on the number of odd cutsets match in the first order term at the exponent. The next order term is determined by $\lambda(d) := \log_2 \mu(d) - 1$.
We have shown that $2^{-2d} \le \lambda(d) \le \tfrac{C \log^{3/2}d}{\sqrt{d}}$ and it is natural to ask what the correct asymptotics of $\lambda(d)$ is. Namely, is it exponential as in the lower bound? Is it polynomial as in the upper bound?

In~\cite{peled2010high}, Peled also raised the question of determining the scaling limit of odd cutsets.
He suggested that in contrast to the case of ordinary cutsets (without the oddness condition), where it is plausible that the scaling limit is super Brownian motion, it may be the case that a random odd cutset typically contains a macroscopic cube in its interior.

\subsection{Notation}\label{sec:notation}

\begin{figure}
	\centering
	\begin{subfigure}[t]{.27\textwidth}
		\centering
		\includegraphics[scale=0.45]{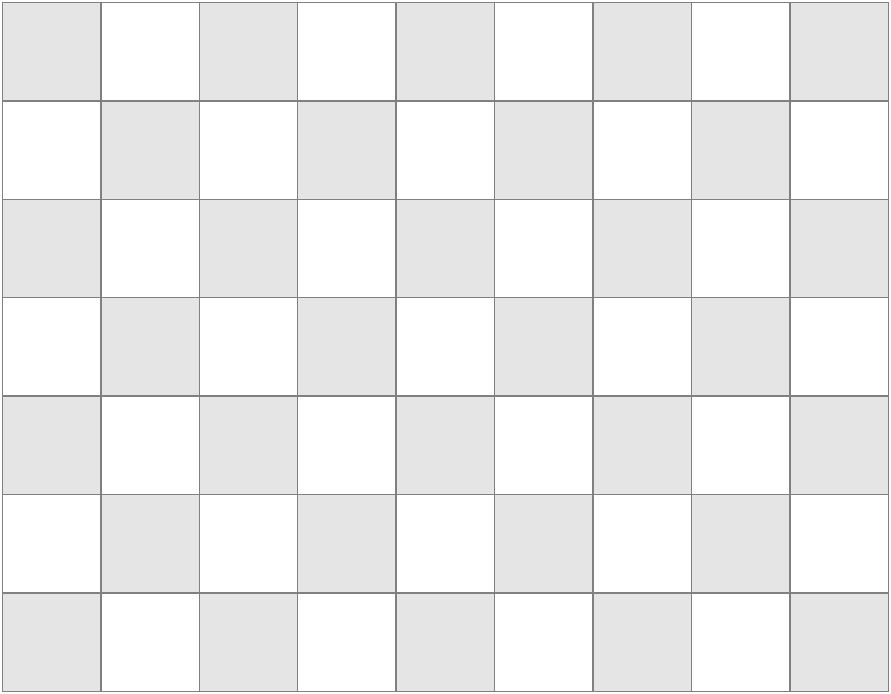}
		\caption{The even and odd bipartition classes of $\Z^d$.}
		\label{fig:even-odd-bipartition}
	\end{subfigure}%
	\begin{subfigure}{20pt}
		\quad
	\end{subfigure}%
	\begin{subfigure}[t]{.27\textwidth}
		\centering
		\includegraphics[scale=0.45]{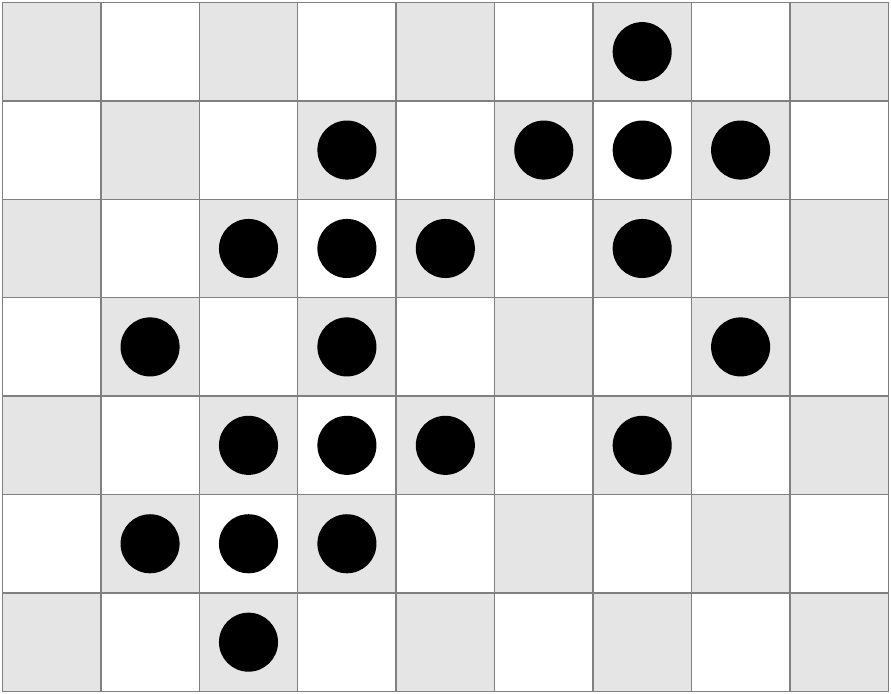}
		\caption{An odd set.}
		\label{fig:odd-set}
	\end{subfigure}%
	\begin{subfigure}{20pt}
		\quad
	\end{subfigure}%
	\begin{subfigure}[t]{.27\textwidth}
		\centering
		\includegraphics[scale=0.45]{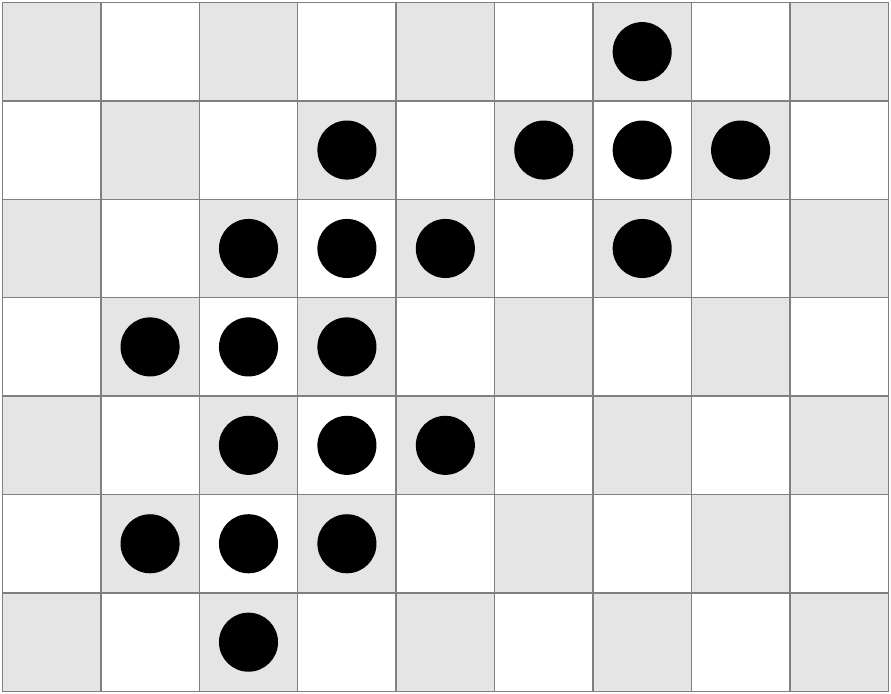}
		\caption{A regular odd set.}
		\label{fig:regular-odd-set}
	\end{subfigure}
	\caption{The vertices of $\Z^d$ are depicted as squares, with the even vertices in white and the odd vertices in gray. An odd set is a set whose internal boundary consists solely of odd vertices. A set is regular if both it and its complement have no isolated vertices.}
	\label{fig:odd-sets}
\end{figure}

Let $G=(V,E)$ be a graph.
For vertices $u,v \in V$ such that $\{u,v\} \in E$, we say that $u$ and $v$ are {\em adjacent} and write $u \sim v$.
For a subset $U \subset V$, denote by $N(U)$ the {\em neighbors} of $U$, i.e., vertices in $V$ adjacent to some vertex in $U$, and define for $t>0$,
\[ N_t(U) := \{ v \in V : |N(v) \cap U| \ge t \} .\]
In particular, $N_1(U)=N(U)$.
Denote the \emph{internal} and \emph{external vertex-boundary} of $U$ by $\intB U := U \cap N(U^c)$ and $\extB U := N(U) \setminus U$, respectively. We also use the notation $\intextB U := \intB U \cup \extB U$, $U^+ := U \cup \extB U$ and $v^+ := \{v\}^+$.
The set of edges between two disjoint sets $U$ and $W$ is denoted by $\partial(U,W):=\{ \{u,w\} \in E : u \in U, w \in W \}$.
In particular, the \emph{edge-boundary} of $U$ is $\partial U := \partial(U,U^c)$. We also write $\partial u := \partial \{u\}$.
The graph-distance between $u$ and $v$ is denoted by $\text{dist}(u,v)$. For two non-empty sets $U,W \subset V$, we denote by $\dist(U,W)$ the minimum graph-distance between a vertex in $U$ and a vertex in $W$.

{\bf Policy regarding constants.} In the rest of the paper, we employ the following policy on constants. We write $C,c,C',c'$ for positive absolute constants, whose values may change from line to line. Specifically, the values of $C,C'$ may increase and the values of $c,c'$ may decrease from line to line.

\subsection{Odd sets and isoperimerty}
We use $\Even$ ($\Odd$) to denote the set of even (odd) vertices of $\Z^d$.
Thus, a set $U \subset \Z^d$ is odd if and only if $\intB U \subset \Odd$ and it is even if and only if $\intB U \subset \Even$.
We say that $U$ is \emph{regular} if both it and its complement contain no isolated vertices.
Thus, a cutset is regular if and only if it is not a singleton.
Observe that $U$ is odd if and only if $(\Even \cap U)^+ \subset U$ and that $U$ is regular odd if and only if $U=(\Even \cap U)^+$ and $U^c=(\Odd \cap U^c)^+$.
See Figure~\reffig{fig:odd-sets}.

For a set $U\subset\Z^d$ and a unit vector $s \in \Z^d$, we define the boundary of $U$ \emph{in direction $s$} to be $\partial^s U := \{v \in U : v+s \notin U \}$.
A nice property of odd sets is that the size of their boundary is the same in every direction.

\begin{lemma}\label{lem:odd-set-boundary-size}
	Let $U \subset \Z^d$ be finite and odd.
	Then, for any unit vector $s \in \Z^d$, we have
	\[ |\partial^s U| = |\Odd \cap U| - |\Even \cap U| = \tfrac{|\partial U|}{2d} .\]
\end{lemma}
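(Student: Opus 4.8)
The plan is to prove all three equalities by a direct double-counting/telescoping argument, exploiting the defining property of odd sets: if $v \in U$ is even then $v^+ \subset U$, i.e. all $2d$ neighbors of $v$ lie in $U$. First I would establish $|\partial^s U| = |\Odd \cap U| - |\Even \cap U|$ for a fixed unit direction $s$. Partition $\Z^d$ into lines parallel to $s$; on each such line $\ell$, the set $U \cap \ell$ is a finite union of intervals, and $|\partial^s U \cap \ell|$ counts the number of these intervals (each interval contributes exactly one vertex $v$ with $v \in U$, $v+s \notin U$, namely its "top" endpoint). So it suffices to show that summing $(\mathbbm{1}[v \in \Odd] - \mathbbm{1}[v \in \Even])$ over any single run of consecutive vertices of $U$ inside $\ell$ gives $1$. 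Since consecutive vertices on a line alternate parity, such a run has the form odd, even, odd, $\dots$ or even, odd, $\dots$; the oddness hypothesis forbids the two endpoints of a run from being even (an even endpoint $v$ would have its neighbor along $\ell$ outside $U$, so $v \in \intB U$, contradicting $\intB U \subset \Odd$), hence every run starts and ends with an odd vertex. An alternating $\pm 1$ sum over a block that begins and ends with $+1$ equals exactly $1$, which gives the claim on that line; summing over all lines yields $|\partial^s U| = |\Odd \cap U| - |\Even \cap U|$. Crucially the right-hand side does not depend on $s$, so all directional boundaries have the same size.

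Next I would relate this to $|\partial U|$. The edge-boundary decomposes by direction: each edge of $\partial U$ is parallel to exactly one of the $d$ coordinate axes, and an edge in the $e_i$-direction with endpoints $v$, $v+e_i$ contributes to exactly one of $\partial^{e_i} U$ or $\partial^{-e_i} U$ depending on which endpoint is in $U$. Thus $|\partial U| = \sum_{i=1}^d (|\partial^{e_i} U| + |\partial^{-e_i} U|)$. By the first part, every one of these $2d$ terms equals $|\Odd \cap U| - |\Even \cap U|$, so $|\partial U| = 2d\,(|\Odd \cap U| - |\Even \cap U|) = 2d\,|\partial^s U|$, which is the remaining identity.

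The only genuinely delicate point is the argument on a single line: one must be careful that a "run" of $U$ along $\ell$ cannot have an even endpoint, and I expect this parity bookkeeping — together with checking the degenerate cases (a run consisting of a single vertex, which must then be odd; the empty intersection $U \cap \ell = \emptyset$; and finiteness of $U$ ensuring each line meets $U$ in finitely many finite intervals) — to be the main thing to get right. Everything else is routine summation. It is also worth noting in passing that this lemma already forces $|\partial U|$ to be a multiple of $2d$ for every finite odd set, which is the divisibility phenomenon referred to after Theorem~\ref{thm:main}.
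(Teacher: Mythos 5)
Your argument is correct, and it is genuinely different from the paper's. The paper's proof is a compact set-theoretic bijection: it introduces the shift $U^s := \{u+s : u \in U\}$, notes that the map $u \mapsto u+s$ gives $|\Even \cap U| = |\Odd \cap U^s|$, and then uses oddness of $U$ in two places — first to argue $\Odd \cap U^s \subset U$ (because an even vertex of $U$ has all its neighbors in $U$), and second to argue $U \setminus U^s \subset \Odd$ (because any vertex of $U$ missing a neighbor is in $\intB U \subset \Odd$) — which telescopes to $|\Even \cap U| = |\Odd \cap U| - |U \setminus U^s|$ and hence to the claimed identity. Your proof instead decomposes $\Z^d$ into lines parallel to $s$ and does the bookkeeping run-by-run: each maximal run of $U$ along a line must have odd endpoints, so the alternating $\pm 1$ sum over that run is exactly $1$, which matches the single contribution of that run to $\partial^s U$. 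The two approaches encode the same parity fact but at different granularity: the paper's version is shorter and avoids any case analysis on runs (empty lines, singleton runs, etc.), while your line-by-line version is more geometric and makes it visibly clear why the formula holds and why $s$ is irrelevant. One small point worth flagging: the paper's chain as written actually computes $|U \setminus U^s| = |\partial^{-s} U|$ rather than $|\partial^s U|$, which is harmless since $s$ is arbitrary (or since $|\partial^s U| = |\partial^{-s} U|$ for any finite set), but your approach sidesteps this wrinkle entirely. Your derivation of the second equality from the first is the same as the paper's.
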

\begin{proof}
Denote $U^s := \{ u+s : u \in U \}$.
The first equality follows from
\begin{align*}
|\Even \cap U| &= |\Odd \cap U^s| = |\Odd \cap U^s \cap U| = |\Odd \cap U| - |\Odd \cap U \setminus U^s| \\&= |\Odd \cap U| - |U \setminus U^s| = |\Odd \cap U| - |\partial^s U| .
\end{align*}
The second equality now follows from the first, since $|\partial U| = \sum_{s'} |\partial^{s'} U| = 2d \cdot |\partial^s U|$.
\end{proof}

\begin{cor}\label{cor:odd-set-min-boundary-size}
Let $U \subset \Z^d$ be finite and odd. If $U$ contains an even vertex then $|\partial U| \ge 2d(2d-1)$.
\end{cor}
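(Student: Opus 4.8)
The plan is to deduce the corollary from Lemma~\ref{lem:odd-set-boundary-size} together with a vertex-isoperimetric inequality for the neighbourhood operator. Set $A := \Even \cap U$, which is finite and, by hypothesis, non-empty. Since $U$ is odd we have $(\Even\cap U)^+ \subseteq U$; as $A \subseteq \Even$, its neighbour set $N(A)$ lies in $\Odd$, so $A$ and $N(A)$ are disjoint and the displayed containment says in particular that $N(A) \subseteq U$, hence $N(A) \subseteq \Odd \cap U$. Thus $|\Odd\cap U| - |\Even\cap U| \ge |N(A)| - |A|$, and since Lemma~\ref{lem:odd-set-boundary-size} gives $|\partial U| = 2d\,(|\Odd\cap U| - |\Even\cap U|)$, it suffices to prove that
\[ |N(A)| \ge |A| + 2d - 1 \qquad \text{for every finite non-empty } A \subseteq \Z^d . \]

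I would prove this by induction on $|A|$. The base case $|A| = 1$ is the identity $|N(v)| = 2d$. For the inductive step, pick $a^\ast \in A$ with maximal first coordinate and set $A' := A \setminus \{a^\ast\}$. The point is that $a^\ast + e_1 \in N(A) \setminus N(A')$: it is a neighbour of $a^\ast$, hence in $N(A)$; and each of its neighbours other than $a^\ast$ — namely $a^\ast + 2e_1$ and $a^\ast + e_1 \pm e_j$ for $2 \le j \le d$ — has first coordinate strictly exceeding $\max_{a \in A} a_1$, hence lies outside $A \supseteq A'$, while $a^\ast$ itself was removed. Therefore $|N(A)| \ge |N(A')| + 1 \ge (|A'| + 2d - 1) + 1 = |A| + 2d - 1$, closing the induction. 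Combining with the reduction above yields $|\partial U| \ge 2d(2d-1)$.

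I do not expect a genuine obstacle here: the one place needing (minimal) care is the claim that deleting $a^\ast$ destroys the neighbour $a^\ast + e_1$ from $N(A')$, which is exactly the elementary first-coordinate computation just described. One could instead try to point directly to $2d-1$ ``extra'' extremal odd vertices of $U$ beyond those forced by a single even vertex, but verifying that such vertices are distinct and not already accounted for makes the inductive formulation the cleaner route. Note finally that the bound is sharp for every $d \ge 2$: the plus-sign $v^+$ around an even vertex $v$ is a finite odd set with $|\partial v^+| = 2d(2d-1)$.
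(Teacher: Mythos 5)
Your proof is correct, but it takes a genuinely different route from the paper's. The paper's proof is a one-liner: pick an even $u\in U$, note $u^+\subset U$, and observe that the $2d-1$ distinct lines in direction $s$ that pass through $u^+$ each contribute at least one vertex to $\partial^s U$ (each line meets the finite set $U$ in a bounded set with a top element); hence $|\partial^s U|\ge 2d-1$, and Lemma~\ref{lem:odd-set-boundary-size} gives $|\partial U|=2d|\partial^s U|\ge 2d(2d-1)$. You instead use the other equality in Lemma~\ref{lem:odd-set-boundary-size}, $|\partial U|=2d\,(|\Odd\cap U|-|\Even\cap U|)$, together with $N(\Even\cap U)\subset \Odd\cap U$, and reduce to a standalone vertex-isoperimetric statement $|N(A)|\ge|A|+2d-1$, which you establish by a clean shelling induction on $|A|$ using a vertex of maximal first coordinate. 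Both arguments really prove the same intermediate bound, namely $|\Odd\cap U|-|\Even\cap U|\ge 2d-1$, but by different geometric pictures: the paper counts column-tops around $u^+$, while you count new neighbors gained by one extremal vertex at a time. The paper's argument is shorter and uses only what is already set up; yours is somewhat longer but proves a reusable isoperimetric lemma and would scale to a sharper conclusion if $U$ were known to contain many even vertices. Minor notational care: what you denote $N(A)$ for $A\subset\Even$ coincides with $\extB A$ since $N(A)\cap A=\emptyset$, which is why the inequality in the form you state it (without subtracting $|N(A)\cap A|$) is exactly what you need.
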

\begin{proof}
Let $u \in U$ be even. Since $U$ is odd, we have $u^+ \subset U$. Thus, $|\partial^s U| \ge 2d-1$, where $s \in \Z^d$ is any unit vector, and the corollary follows from Lemma~\ref{lem:odd-set-boundary-size}.
\end{proof}

We conclude with a well-known isoperimetric inequality.

\begin{lemma}[\cite{feldheimspinka}]\label{lem:isoperimetry}
	Let $U \subset \Z^d$ be finite. Then $|\partial U| \ge 2d \cdot |U|^{1-1/d}$.
\end{lemma}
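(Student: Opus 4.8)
The plan is to deduce this from the Loomis--Whitney projection inequality together with a column-counting argument relating edge boundaries to coordinate projections. For $i \in \{1,\dots,d\}$ write $e_i$ for the $i$-th standard basis vector, so that the $2d$ unit vectors of $\Z^d$ are $\pm e_1,\dots,\pm e_d$, and let $\pi_i \colon \Z^d \to \Z^{d-1}$ be the map that forgets the $i$-th coordinate.

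\textbf{Step 1: boundary edges dominate projections.} Fix $i$. For each $y$ in the finite nonempty set $\pi_i(U)$, the fibre $\pi_i^{-1}(y) \cap U$ is a nonempty finite subset of a line; its point with maximal $i$-th coordinate lies in $\partial^{e_i} U$ and its point with minimal $i$-th coordinate lies in $\partial^{-e_i} U$. Since fibres over distinct values of $y$ are disjoint, this yields $|\partial^{e_i} U| \ge |\pi_i(U)|$ and $|\partial^{-e_i} U| \ge |\pi_i(U)|$. Summing over all $2d$ unit vectors,
\[ |\partial U| \;=\; \sum_{i=1}^d \big( |\partial^{e_i} U| + |\partial^{-e_i} U| \big) \;\ge\; 2 \sum_{i=1}^d |\pi_i(U)|. \]

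\textbf{Step 2: Loomis--Whitney and AM--GM.} The discrete Loomis--Whitney inequality states that $|U|^{d-1} \le \prod_{i=1}^d |\pi_i(U)|$ for every finite $U \subset \Z^d$. Combining it with the arithmetic--geometric mean inequality gives
\[ \sum_{i=1}^d |\pi_i(U)| \;\ge\; d \Big( \prod_{i=1}^d |\pi_i(U)| \Big)^{1/d} \;\ge\; d\,|U|^{(d-1)/d}, \]
and substituting this into Step 1 gives $|\partial U| \ge 2d\,|U|^{1-1/d}$, as claimed.

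\textbf{The main point.} Everything above is elementary once Loomis--Whitney is granted, so that inequality is the only real ingredient; if a self-contained treatment is wanted, the cleanest route is via entropy. Taking $X$ uniform on $U$ so that $H(X) = \log|U|$, Shearer's inequality applied to the $d$ coordinate-subsets of size $d-1$ gives $(d-1)H(X) \le \sum_{i=1}^d H(\pi_i(X)) \le \sum_{i=1}^d \log|\pi_i(U)|$, which is exactly the Loomis--Whitney bound; alternatively one can induct on $d$ using H\"older's inequality on the fibre decomposition. One should also dispatch the degenerate cases: the statement is vacuous for $U=\emptyset$ and is an equality for a single vertex, where $|\partial U| = 2d = 2d\,|U|^{1-1/d}$, and it is tight for axis-parallel cubes --- a good sanity check on the constant $2d$, since in that case all projections are equal and both the AM--GM step and Loomis--Whitney hold with equality.
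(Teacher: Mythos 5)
The paper states this lemma without proof, citing \cite{feldheimspinka}, so there is no in-document proof to compare against; your argument is, however, the canonical one for this inequality, and it is correct and complete. The fibre-counting step correctly establishes $|\partial^{\pm e_i}U| \ge |\pi_i(U)|$ (and hence $|\partial U| \ge 2\sum_i |\pi_i(U)|$, using that $\partial U$ partitions by direction), and the combination of Loomis--Whitney with AM--GM cleanly finishes; the sanity checks on singletons and cubes confirm the sharp constant $2d$.
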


\subsection{Organization}
The rest of the paper is organized as follows. In Section~\ref{sec:super-mult}, we show that $\OCC{n}{d}$ is almost super-multiplicitive and use this to prove Theorem~\ref{thm:limit-exists}. In Section~\ref{sec:lower-bound}, we prove the lower bound stated in Theorem~\ref{thm:main}. In Section~\ref{sec:upper-bound}, we state two propositions; Proposition~\ref{lem:number-of-cutsets-with-approx} which bounds the number of odd cutsets approximated by a given approximation and Proposition~\ref{lem:family-of-approx} which shows that a relatively small number of approximations are sufficient to approximate every odd cutset in question. We then deduce the upper bound stated in Theorem~\ref{thm:main} from these propositions. Section~\ref{sec:number-of-cutsets-with-approx} and Section~\ref{sec:approx} are dedicated to the proofs of Proposition~\ref{lem:number-of-cutsets-with-approx} and Proposition~\ref{lem:family-of-approx}, respectively.

\subsection{Acknowledgments}
We wish to thank Ron Peled for suggesting the problem to us and for useful discussions.

%
\section{Almost super-multiplicitivity}
\label{sec:super-mult}
%

The main step in showing the existence of the limit defining the growth constant $\mu(d)$ is establishing the following ``almost'' super-multiplicitivity property of $\OCC{n}{d}$.

\begin{prop}\label{prop:almost-super-mult}
Let $d \ge 2$ and let $n,m,k \in 2d \N$ with $k \ge 12d^2$. Then
\[ \OCC{n+m+k}{d} \ge \frac{\OCC{n}{d} \cdot \OCC{m}{d}}{ \left(\frac{m}{d}\right)^{\frac{d}{d-1}}} .\]
\end{prop}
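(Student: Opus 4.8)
The inequality is super-multiplicativity with a polynomial defect, so the plan is a counting (fibre) argument: to each ordered pair $(S_1,S_2)$ of odd cutsets with $0\in S_1\cap S_2$, $|\partial S_1|=n$ and $|\partial S_2|=m$ one associates an odd cutset $\Phi(S_1,S_2)$ containing the origin with $|\partial\Phi(S_1,S_2)|=n+m+k$, in such a way that every odd cutset is the image of at most $(m/d)^{d/(d-1)}$ pairs; counting pairs then gives the bound.

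\emph{The construction.} Let $p=p(S_1)$ be the lexicographically smallest vertex of $S_1$ of maximal first coordinate and $q=q(S_2)$ the lexicographically smallest vertex of $S_2$ of minimal first coordinate; since $S_1$ and $S_2$ are odd, $p$ and $q$ lie on their internal boundaries and hence are odd vertices. Choose a (necessarily even) vector $v$ carrying $q$ to a point $p+2Me_1$ with $M$ large enough to accommodate the bridge below, so that $S_2+v$ is again odd, is disjoint from $S_1$, and lies strictly above it in the first coordinate. Between the two pieces insert a thin odd ``bridge'' $R=E^+$, for a finite $E\subset\Even$ situated strictly between them, chosen so that $R$ meets $S_1$ only in $p$ and meets $S_2+v$ only in $q+v$ (arranged, say, by letting $E$ have a single vertex at first coordinate $p_1+1$, a single vertex at first coordinate $(q+v)_1-1$, and no vertex outside the open slab of first coordinates between $p_1$ and $(q+v)_1$). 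Put $\Phi(S_1,S_2):=S_1\cup R\cup(S_2+v)$.

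\emph{Validity and boundary count.} A union of odd sets is odd: if $x\in\intB(U\cup W)$ then $x$ has a neighbour outside $U\cup W$, hence outside whichever of $U,W$ contains $x$, so $x\in\intB U\cup\intB W\subseteq\Odd$. Thus $\Phi(S_1,S_2)$ is odd; it is connected because $R$ meets $S_1$ in $p$ and $S_2+v$ in $q+v$ and each piece is connected; and it is co-connected by a routine topological argument, since $R$ is a thin set protruding upward from $S_1$ with $S_2+v$ above it, so deleting $R$ keeps the (connected) complement connected. The extremality of $p$ and $q$, together with the conditions on $E$, force $R\cap S_1=\{p\}$, $R\cap(S_2+v)=\{q+v\}$, $S_1\cap(S_2+v)=\emptyset$, and the absence of any further edges between $R$ and $S_1\cup(S_2+v)$; counting vertices and internal edges then yields $|\partial\Phi(S_1,S_2)|=n+m+|\partial R|-4d$. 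So it remains to realize, for the given $k$, a bridge $R$ with $|\partial R|=k+4d$. For this one uses that adjoining a single even vertex (together with its odd neighbours) to such a set changes the edge-boundary by $2d(r-1)$, where $r$ is the number of odd neighbours newly adjoined (the local form of the identity proved in Lemma~\ref{lem:odd-set-boundary-size}); taking $E$ to be a chain of crosses of tunable length with local adjustments then realizes every multiple of $2d$ that is at least $12d^2+4d$ as a possible value of $|\partial R|$ — which is precisely where the hypothesis $k\ge12d^2$ is used. Hence $\Phi$ is well defined.

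\emph{The fibre bound and the main obstacle.} Given $S=\Phi(S_1,S_2)$, the bridge $R$ is recoverable from $S$ as its unique maximal thin ``stacked'' portion near the topmost vertex; removing it exposes the part of $S$ containing the origin, which is exactly $S_1$, and the part lying above, which is exactly $S_2+v$. Thus $S_1$ and $S_2+v$ are determined by $S$, and, since $0\in S_2$, the translation $v$ is one of the $|S_2+v|=|S_2|$ vertices of the upper part; so $S_2$ is one of at most $|S_2|$ sets. By the isoperimetric inequality (Lemma~\ref{lem:isoperimetry}), $|S_2|\le(|\partial S_2|/2d)^{d/(d-1)}=(m/2d)^{d/(d-1)}\le(m/d)^{d/(d-1)}$, so at most $(m/d)^{d/(d-1)}$ pairs map to any given $S$, which is what we wanted. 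The delicate point is to build the bridge so that three requirements hold at once: $\Phi(S_1,S_2)$ remains co-connected; its edge-boundary is exactly $n+m+k$ (which requires understanding precisely which ``excess'' edge-boundaries are attainable, and is presumably the source of the constant $12d^2$); and the bridge is canonically recoverable from $S$, so that the fibre count is valid — this last being the main obstacle, as one must preclude spurious decompositions of $S$ when $S_1$ or $S_2$ itself has thin protrusions.
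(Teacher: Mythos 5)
Your construction is in the same spirit as the paper's — pick extremal vertices, translate one piece past the other, glue, and bound the fibre via the isoperimetric inequality — but it leaves unresolved exactly the step you flag at the end as ``the main obstacle'': making the bridge canonically recoverable from $\Phi(S_1,S_2)$. This is not a deferrable detail; it is the crux. If $S_1$ already terminates in a thin protrusion in the $e_1$ direction (a chain of even vertices together with their neighbourhoods, say), then after gluing your bridge $R$ onto its tip $p$, the resulting set has a single long thin protrusion with no marker of where $S_1$ ends and $R$ begins. Knowing $n$, $m$, $k$ constrains the total boundary size of each part, but not where the ``kinks'' of the thin region belong, so ``unique maximal thin stacked portion'' is not well-defined and the injectivity needed for the fibre bound fails. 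As written, the proposal is therefore incomplete.

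The paper avoids this by reversing the order of operations: instead of gluing the raw cutsets and then trying to identify a bridge inside the result, it first \emph{modifies} each cutset to have a canonical landmark — a ``peak,'' the unique maximal even vertex with respect to $x_1+x_2$ — at a fixed boundary cost (Step~1). Once a peak $u$ exists, the geometry above level $u_1+u_2$ is forced to be exactly $\{u\}\cup\{u+e_1,u+e_2\}$, so the paper can glue $S$ directly to a reflected translate of $S'$ so that the two peaks meet along $\{u+e_1,u+e_2\}$ (Step~4) with no intervening bridge; the decomposition is then recovered by finding the unique hyperplane $\{x_1+x_2=a\}$ meeting $T$ in exactly two points with the right split of boundary sizes. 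The boundary-size adjustment that you try to realize with a ``chain of crosses with local adjustments'' is instead performed \emph{before} gluing, while the peak exists and keeps the modification canonical, via a Frobenius (Chicken McNugget) argument with step sizes $2d-3$ and $2d-2$ (Step~3) — this is where $k\ge 12d^2$ enters. The missing idea in your proposal, then, is to canonicalise the caps prior to gluing rather than hoping to disambiguate the bridge afterwards.
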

\begin{proof}
Fix $d \ge 2$ and denote by $\cC_n$ the collection of odd cutsets $S$ in $\Z^d$ having $|\partial S|=n$ and containing the origin so that $|\cC_n|=\OCC{n}{d}$.
Endow $\Z^d$ with the partial order induced by the sum of the first two coordinates.
We say that a vertex $u$ is the \emph{peak} of an odd set $S$ if it is the unique maximal element among all the \emph{even} vertices in $S$.
Denote by $\cP_n$ the collection of odd cutsets in $\cC_n$ having a peak and by $\cO_n$ those having a peak at the origin. The proof consists of four parts:
\begin{enumerate}
	\item $|\cC_n| \le |\cP_{n+2d(2d-3)}|$.
	\item $|\cP_n| \le |\cO_n| \cdot (n/2d)^{d/(d-1)}$.
	\item $|\cO_n| \le |\cO_{n+k-8d^2}|$.
	\item $|\cP_n| \cdot |\cO_m| \le |\cC_{n+m-4d}|$.
\end{enumerate}
Since we may assume that $m \ge 2d(2d-1)$ by Corollary~\ref{cor:odd-set-min-boundary-size}, the proposition then follows from
\begin{align*}
|\cC_n| \cdot |\cC_m|
 &\overset{(1)}{\le} |\cP_{n+2d(2d-3)}| \cdot |\cP_{m+2d(2d-3)}| \\
 &\overset{(2)}{\le} |\cP_{n+2d(2d-3)}| \cdot |\cO_{m+2d(2d-3)}| \cdot (m/2d +2d-3)^{d/(d-1)} \\
 &\overset{(3)}{\le} |\cP_{n+2d(2d-3)}| \cdot |\cO_{m+k-4d^2+10d}| \cdot (m/d)^{d/(d-1)} \\
 &\overset{(4)}{\le} |\cC_{n+m+k}| \cdot (m/d)^{d/(d-1)} .
\end{align*}

\begin{figure}
	\centering
	\begin{subfigure}[t]{.27\textwidth}
		\centering
		\includegraphics[scale=0.45]{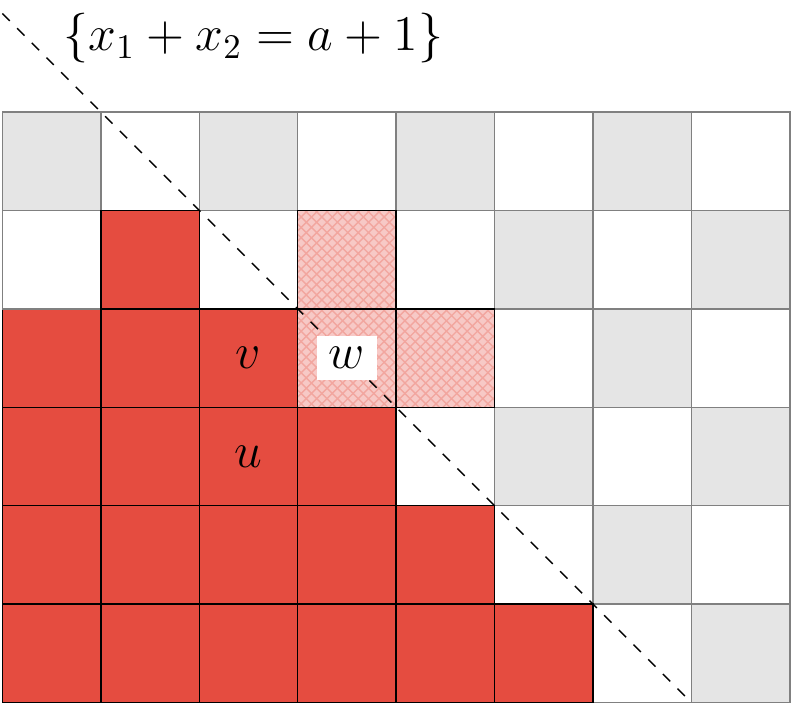}
		\caption{Modifying an odd cutset to create a peak.}
		\label{fig:creating-peak}
	\end{subfigure}%
	\begin{subfigure}{20pt}
		\quad
	\end{subfigure}%
	\begin{subfigure}[t]{.27\textwidth}
		\centering
		\includegraphics[scale=0.45]{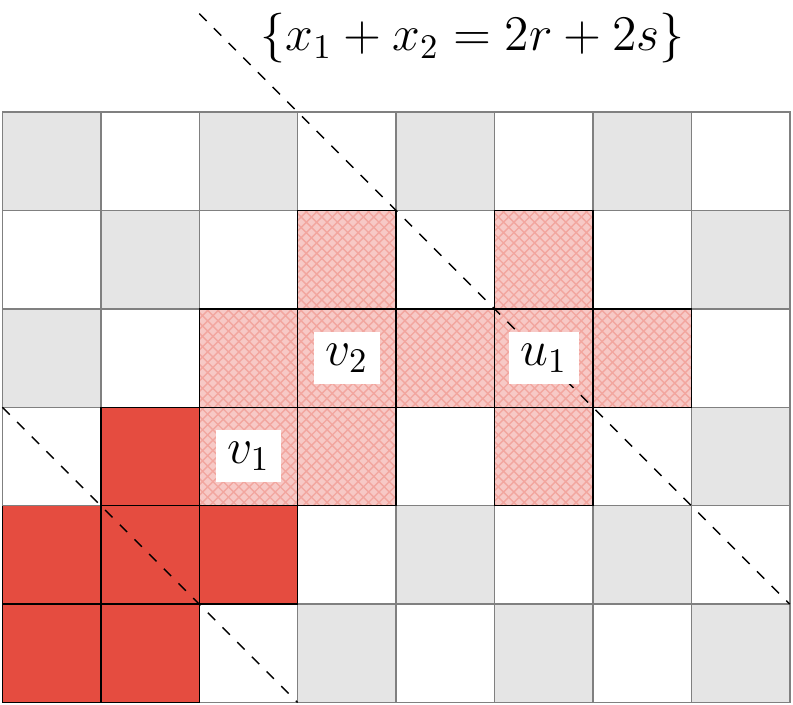}
		\caption{Modifying an odd cutset (with a peak) to adjust its boundary size.}
		\label{fig:extending-peak}
	\end{subfigure}%
	\begin{subfigure}{20pt}
		\quad
	\end{subfigure}%
	\begin{subfigure}[t]{.27\textwidth}
		\centering
		\includegraphics[scale=0.45]{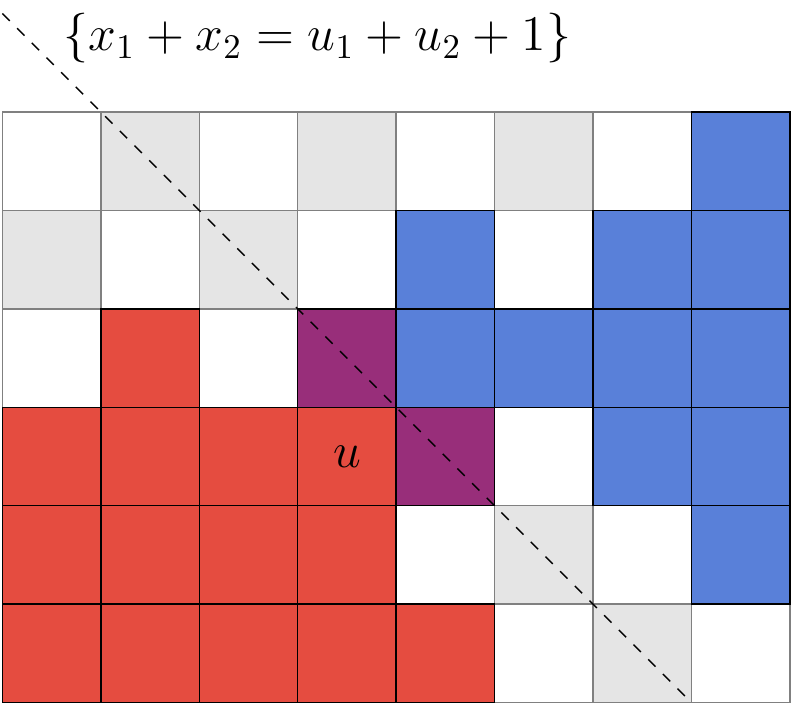}
		\caption{Merging two odd cutsets (one with a peak and one with an inverted peak).}
		\label{fig:merging-peak}
	\end{subfigure}
	\caption{Modifying odd cutsets.}
	\label{fig:modifying-cutsets}
\end{figure}

To prove the first part, we take a set $S \in \cC_n$ and construct a set $S' \in \cP_{n+2d(2d-3)}$ in an injective manner.
Let $a$ be the maximum value for which the hyperplane $\{ x_1+x_2 = a \}$ intersects $S$. Let $v$ be a vertex in this intersection, let $u \in S$ be a adjacent to $v$ (note that $S \neq \{v\}$ since $\zero^+ \subset S$) and denote $w := u+e_1+e_2 \notin S$. Since $S$ is odd and $v+e_1 \notin S$, we have that $v$ is odd, $u$ is even, $u^+ \subset S$ and $v-u \in \{e_1,e_2\}$. It is easy to check that $S' := S \cup w^+$ is an odd set with $|\partial S'|=n+2d(2d-3)$ and a peak at $w$. Since $S'$ is clearly connected, it remains only to check that $S'$ is co-connected. Since $S$ is co-connected, any two vertices $x,y \in (S')^c$ can be connected via a path in $S^c$. Thus, it suffices to check that any two vertices $x,y \in (S')^c \cap N(S' \setminus S)$ can be connected via a path in $(S')^c$. Using that $(S')^c \cap N(S' \setminus S) = w^{++} \cap \{x_1+x_2 \ge a+1\}$ and that $w$ is the peak of $S'$, this is easily verified.
See Figure~\reffig{fig:creating-peak}.

The second part follows from the isoperimetric inequality in Lemma~\ref{lem:isoperimetry}.

To prove the third part, let $r,s \in \N$, and, for $S \in \cO_n$, define
\[ S' := S \cup \big\{(i,i,0,\dots,0)\in\Z^d : 0< i\le r \big\}^+ \cup \big\{(r+2i,r,0,\dots,0)\in\Z^d : 0< i\le s \big\}^+ .\]
It is straightforward to check that $S' \in \cC_{n+r(2d-3)+s(2d-2)}$ and has a peak at $(r+2s,r,0,\dots,0)$. Since the mapping $S \mapsto S'$ is injective, this shows that $|\cO_n| \le |\cO_{n+r(2d-3)+s(2d-2)}|$. See Figure~\reffig{fig:extending-peak}.
Finally, since $2d-2$ and $2d-3$ are co-prime (i.e., their $\gcd$ is 1) and since $k-8d^2>(2d-3)(2d-2)$, it is possible to choose $r,s\in \N$ so that $k-8d^2 = r(2d-3)+s(2d-2)$
(this is known as Sylvester's solution to the Frobenius problem for two coins).

To prove the fourth part, we take an element $(S,S') \in \cP_n \times \cO_m$ and construct a set $T \in \cC_{n+m-4d}$ in an injective manner. Let $S''$ be the reflection of $S'$ through the hyperplane $\{x_1+x_2=0\}$ (i.e., $S''$ is obtained by negating the first two coordinates of every vertex in $S'$). Let $u$ be the peak of $S$ and define $T := S \cup R$, where $R:=u+e_1+e_2+S''$. Since $S$ and $R$ lie on different sides of the hyperplane $\{x_1+x_2=u_1+u_2+1\}$ (except for $\{u+e_1,u+e_2\}$, which is their common intersection with this hyperplane), it follows easily that $T$ is a connected odd set with $|\partial T|=n+m-4d$. To see that $T \in \cC_{n+m-4d}$, it remains to check that $T$ is co-connected, i.e., that $T^c = S^c \cap R^c$ is connected. This follows from the observation that $\widebar{S} := S^c \cap \{ x_1+x_2 \le u_1+u_2+1\}$ is connected, and similarly, that $\widebar{R} := R^c \cap \{ x_1+x_2 \ge u_1+u_2+1 \}$ is connected, and from $\widebar{S} \cap \widebar{R} \neq \emptyset$ and $T^c = \widebar{S} \cup \widebar{R}$. Finally, it is clear that this mapping is injective, since $u$ can be recovered by considering all hyperplanes $\{x_1+x_2=a\}$ which intersect $T$ at two points and using that $|\partial S|$ is known. See Figure~\reffig{fig:merging-peak}.
\end{proof}

\begin{proof}[Proof of Theorem~\ref{thm:limit-exists}]
Fix $d \ge 2$ and denote $a_n := \OCC{2dn}{d}$ and $b_n := a_{n-6d} / 16n^2$. Clearly, it suffices to show the existence of the limit $\lim_{n \to \infty} b_n^{1/n}$. This will follow from Fekete's subadditive lemma (applied to $-\log b_n$) if we show that $b_n$ is a super-multipliciative sequence. Indeed, by Proposition~\ref{prop:almost-super-mult}, for $n \ge m > 6d$, we have
\[ b_{n+m} = \frac{a_{(n-6d)+(m-6d)+6d}}{16(n+m)^2} \ge \frac{a_{n-6d} \cdot a_{m-6d}}{16(n+m)^2 \cdot (2m)^{d/(d-1)}} \ge b_n b_m \cdot \frac{4n^2}{(n+m)^2} \ge b_n b_m . \qedhere \]
\end{proof}

\section{The lower bound}
\label{sec:lower-bound}

The proof of the lower bound in Theorem~\ref{thm:main} is based on a simple counting argument. The idea appeared already in~\cite{peled2010high} (see also~\cite{peled2014odd}). Since the details have not appeared in print, we give a short proof here. Let $d \ge 2$ and let $m$ be a large even integer.
We first prove the lower bound for $n := 2d(m^{d-1}+(d-1)m^{d-2})$ directly by constructing a large family of odd cutsets having $n$ boundary edges. We then use Proposition~\ref{prop:almost-super-mult} to extend the lower bound to other values of $n$.
For brevity, we shall employ the notation $[a,b):=\{a,\dots, b-1\}$ for integers $a<b$.
The proof is accompanied by Figure~\ref{fig:lower-bound}.

Let $B_0 := \Even \cap [0,m)^{d-1} \times \{0\}$ and observe that $B_0^+$ is an odd cutset in $\Z^d$ which contains the origin.
We now show that its edge-boundary size is $n$.
Let $\up=e_d$ be the $d$-th standard basis vector and recall that the boundary of $U \subset \Z^d$ in direction $\up$ is $\partial^\uparrow U = \{v \in U : v+e_d \notin U \}$, and that, by Lemma~\ref{lem:odd-set-boundary-size}, $|\partial U|=2d|\partial^\uparrow U|$ if $U$ is odd.
Let $\pi \colon \Z^d \to \Z^{d-1}$ be the projection onto the first $d-1$ coordinates and note that $\pi(U)=\pi(\partial^\uparrow U)$ for any finite set $U$.
Observe also that
\[ \pi(B_0^+) = [0,m)^{d-1} \cup \bigcup_{i=0}^{d-2}
\Odd_{d-1} \cap [0,m)^{i}\times \{-1,m\} \times [0,m)^{d-2-i}, \]
where $\Odd_{d-1}$ denotes the set of odd vertices in $\Z^{d-1}$.
Thus, since $\pi$ is injective on $\partial^\uparrow B_0^+$, we have
\[ \frac{|\partial B_0^+|}{2d} = |\partial^\uparrow B_0^+| = |\pi(\partial^\uparrow B_0^+)| = |\pi(B_0^+)| = m^{d-1} + (d-1)m^{d-2} = \frac{n}{2d} .\]

\begin{figure}
	\centering
	\includegraphics[scale=0.45]{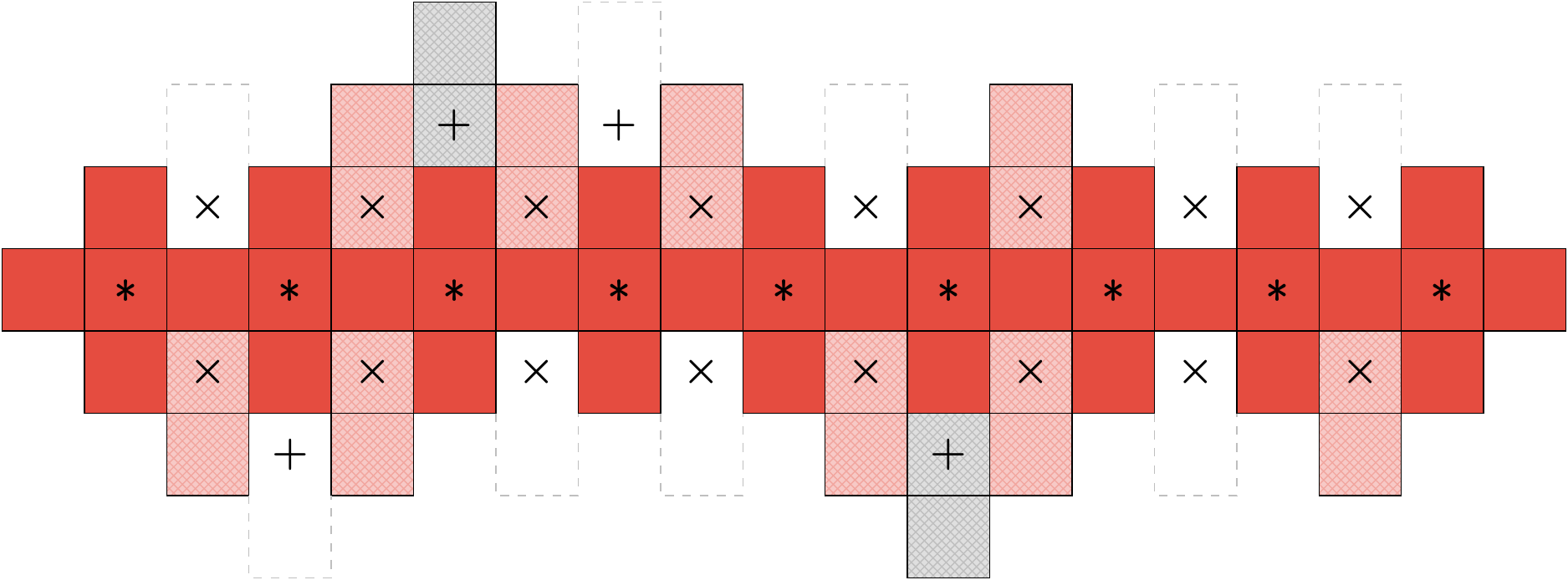}
	\caption{Constructing a large family of odd cutsets. The vertices in $B_0$, $A_1$ and $A_2(B_1)$ (all of which are even vertices) are depicted by $*$, $\times$ and +, respectively. The set $B_0^+$ is an odd cutset (shown in red), and any choice of $B_1 \subset A_1$ gives an odd cutset $(B_0 \cup B_1)^+$ with the same number of boundary edges (the red and pink regions). Once $B_1$ is chosen, any choice of $B_2 \subset A_2(B_1)$ then gives another odd cutset $(B_0 \cup B_1 \cup B_2)^+$ also with the same number of boundary edges (the red, pink and gray regions).}
	\label{fig:lower-bound}
\end{figure}

Let $A_1 := \Even \cap [1,m-1)^{d-1} \times \{\pm 1\}$ and observe that for any $B_1 \subset A_1$, the set $(B_0 \cup B_1)^+$ is an odd cutset. Since $\pi((B_0 \cup B_1)^+)=\pi(B_0^+)$ and since $\pi$ is injective on $\partial^\uparrow(B_0 \cup B_1)^+$, we deduce that $|\partial(B_0 \cup B_1)^+|=|\partial B_0^+|=n$.
Also notice that $(v_1,\dots, v_{d-1},2)\in (B_0\cup B_1)^+$ if and only if $(v_1,\dots, v_{d-1},1)\in B_1$, and similarly, $(v_1,\dots, v_{d-1},-2)\in (B_0\cup B_1)^+$ if and only if $(v_1,\dots, v_{d-1},-1)\in B_1$, so that different choices of $B_1$ produce distinct sets. By counting the number of such sets, we obtain
\[ \OCC{n}{d} \ge 2^{|A_1|} = 2^{(m-2)^{d-1}} = 2^{\frac{n}{2d} \cdot \frac{(m-2)^{d-1}}{m^{d-1} +(d-1) m^{d-2}}} \ge 2^{\frac{n}{2d} \cdot (1-10d/m)} .\]
To obtain a better bound, we consider a ``second order'' augmentation.
Given $B_1 \subset A_1$, define
\[ A_2(B_1) := \left\{ x \in \Even \cap [2,m-2)^{d-1} \times \{\pm 2 \} ~:~ \forall 1 \le i \le d-1 ~~ (x_1,\dots,x_{d-1},\tfrac{x_d}{2}) \pm e_i \in B_1 \right\} .\]
As before, one may easily check that for every choice of $B_1 \subset A_1$ and $B_2 \subset A_2(B_1)$, the set $(B_0 \cup B_1 \cup B_2)^+$ is a distinct odd cutset with precisely $n$ boundary edges. In order to use this to improve the lower bound, we apply a first moment argument. Let $X$ be a uniformly chosen random subset of $A_1$ and denote $Y := |A_2(X)|$. Then using Jensen's inequality, we obtain
\[ \OCC{n}{d} \ge \sum_{B_1 \subset A_1} 2^{|A_2(B_1)|} = 2^{|A_1|} \cdot \E\big[2^{Y}\big] \ge 2^{|A_1| + \E[Y]} .\]
By linearity of expectation, we have $\E[Y] = (m-4)^{d-1} \cdot 2^{-(2d-2)}$ so that
\begin{equation} \label{eq:lower bound ell case}
\OCC{n}{d} \ge 2^{(m-2)^{d-1} + (m-4)^{d-1} \cdot 2^{-2d+2}} \ge 2^{\frac{n}{2d} \cdot (1-10d/m) (1 + 2^{-2d+2})} \ge 2^{\frac{n}{2d} \cdot (1 + 2^{-2d+1})}.
\end{equation}

To complete the proof of the lower bound, we now extend this bound to arbitrary (large) $n' \in 2d\N$. Let $m$ be the largest even integer satisfying $n' - 16d^2 \ge n := 2d(m^{d-1}+(d-1)m^{d-2})$.
Then, by Proposition~\ref{prop:almost-super-mult}, by~\eqref{eq:lower bound ell case}, by the maximality of $m$ and since $\OCC{2d(2d-1)}{d}=1$, we obtain
\[ \OCC{n'}{d} \ge \tfrac{1}{4d^2} \OCC{n}{d}  \ge 2^{(m-2)^{d-1} + (m-4)^{d-1} \cdot 2^{-2d+2}-\log_2 (4d^2)} \ge 2^{\frac{n'}{2d} (1 + 2^{-2d+1})} . \]

\section{The upper bound}
\label{sec:upper-bound}

For the upper bound in Theorem~\ref{thm:main}, which is the main result of this paper, we consider a slightly more general class of sets.
Recall that a subset of $\Z^d$ is called \emph{regular} if both it and its complement contain no isolated vertices.
For a graph $G$ and a positive integer $r$, we denote by $G^{\otimes r}$ the graph on the same vertex set as $G$ in which two vertices are adjacent if their distance in $G$ is at most $r$.
A finite regular subset of $\Z^d$ is an \emph{$r$-cutset} if both it and its complement are connected in $(\Z^d)^{\otimes r}$. Note that the notions of a regular cutset and a 1-cutset coincide.
We write $\OC{n,r}^d$ for the collection of odd $r$-cutsets $S$ in $\Z^d$ having $|\partial S|=n$ and $\dist(\zero,S) \le r$, where $\zero$ denotes the origin in $\Z^d$.

\begin{thm}\label{thm:strong-upper}
There exists a constant $C>0$ such that for any integers $d \ge 2$ and $n,r \ge 1$,
	\[ |\OC{n,r}^{d}|\le 2^{\frac{n}{2d} \big(1 + \frac{Cr \log^{3/2} d}{\sqrt{d}} \big)} .\]
\end{thm}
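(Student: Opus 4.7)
The plan is to deploy the Sapozhenko method of approximations, in the form used by Galvin--Kahn~\cite{galvin2004phase} and simplified in~\cite{feldheimspinka}. To each odd $r$-cutset $S \in \OC{n,r}^d$ I would assign a coarser auxiliary object $A = A(S)$, its \emph{approximation}, which records the global shape of $S$ while forgetting the fine structure near $\partial S$. This yields the product estimate
\[
|\OC{n,r}^d| \;\le\; \#\{\text{admissible approximations}\} \;\cdot\; \max_{A} \#\bigl\{S \in \OC{n,r}^d : A(S) = A\bigr\},
\]
splitting the problem into two independent counts---the contents of Propositions~\ref{lem:number-of-cutsets-with-approx} and~\ref{lem:family-of-approx}, respectively. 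Granted both propositions, Theorem~\ref{thm:strong-upper} is a one-line deduction.

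Concretely, the approximation $A$ would consist of a pair $(A^\ins, A^\out)$ of subsets of $\Even$ and $\Odd$, where $A^\ins$ contains every even vertex $u \in S$ whose entire neighborhood lies in $S$ (the ``deep interior'') and analogously $A^\out$ captures the deep exterior of $S^c$. Because $S$ is odd, $u^+ \subset S$ for every even $u \in S$, so once the deep sides are fixed, only the even vertices sitting in a thin transition layer between $A^\ins$ and $A^\out$ remain to be specified. Proposition~\ref{lem:number-of-cutsets-with-approx} would then bound the number of such specifications by $2^{(n/2d)\cdot Cr\log^{3/2}d/\sqrt{d}}$, which is the source of the correction term in the exponent. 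Proposition~\ref{lem:family-of-approx} would bound the number of admissible approximations by $2^{n/2d}$ (up to sub-exponential corrections), essentially by counting connected subsets of a quotient of $\Z^d$ whose coarse size is proportional to $|\partial S|/2d^2$ as guaranteed by Lemma~\ref{lem:odd-set-boundary-size}. Multiplying,
\[
|\OC{n,r}^d| \;\le\; 2^{n/2d} \cdot 2^{(n/2d)\cdot Cr\log^{3/2}d/\sqrt{d}} \;\le\; 2^{(n/2d)(1 + Cr\log^{3/2}d/\sqrt{d})},
\]
which is precisely the claimed bound.

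The hard part will be Proposition~\ref{lem:number-of-cutsets-with-approx}. Extracting a correction of order $\log^{3/2}d/\sqrt{d}$---rather than the naive $O(1)$---requires a delicate container-type (equivalently, entropy) argument on the bipartite graph $(\Even,\Odd)$, exploiting both the rigidity $u^+ \subset S$ for $u \in \Even \cap S$ and the $\Z^d$ isoperimetric inequality (Lemma~\ref{lem:isoperimetry}) to force the transition layer to be small and well-structured. The $r$-cutset generalization enters only through the relaxation of connectivity of $S$ and $S^c$ to $(\Z^d)^{\otimes r}$; this enlarges the family of admissible approximations by a factor polynomial in $r$ per coarse vertex, which is absorbed into the linear $r$-dependence of the correction term. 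Proposition~\ref{lem:family-of-approx} is more standard: it amounts to a lattice-animal count at scale $2d$, where the factor $2d$ in the denominator of the main exponent $n/2d$ naturally arises from Lemma~\ref{lem:odd-set-boundary-size}.
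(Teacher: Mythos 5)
Your top-level skeleton---assign to each $S\in\OC{n,r}$ an approximation, then bound $|\OC{n,r}|$ by (number of approximations)$\,\times\,$(max number of sets per approximation)---is exactly the paper's deduction of Theorem~\ref{thm:strong-upper} from Propositions~\ref{lem:number-of-cutsets-with-approx} and~\ref{lem:family-of-approx}. However, you have swapped the roles of the two propositions, and this is not a cosmetic error: it reverses where the main term $2^{n/2d}$ lives and leads you to propose proof strategies that would not deliver the stated bounds. In the paper, Proposition~\ref{lem:number-of-cutsets-with-approx} gives $|\cut_n(A)|\le 2^{n/(2d-t)}$ for a single $t$-approximation $A$ --- this is the \emph{main} term $\approx 2^{n/2d}$, and it is the \emph{easy} part, obtained by showing that $S\mapsto D(S)=A_*\cap\intextB S$ is an injection into the minimal vertex-covers of $A_*$ with $|D(S)|\le|\partial S|/(2d-t)$ and applying the entropy bound for minimal covers (Lemma~\ref{lem:sum-over-minimal-covers}). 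Proposition~\ref{lem:family-of-approx} bounds the \emph{number} of approximations by $\exp(Cnr(\log d/d)^{3/2}+Cn\log d/(dt))$ --- this is the correction term and the genuinely hard part (all of Section~\ref{sec:approx}: separating sets via Lov\'asz's fractional cover theorem, then the two-stage refinement into $t$-approximations). Your allocation is the opposite: you claim the approximation count is $2^{n/2d}$ ``by a lattice-animal count at scale $2d$'' and the per-approximation count is the small correction. A lattice-animal count in a graph of degree $\Theta(d^r)$ gives a bound of the form $(Cd)^{O(k)}$, i.e.\ base growing with $d$; there is no way to extract the precise base $2$ from such a count, so your route to the main term fails. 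Conversely, making the per-approximation count as small as $2^{(n/2d)\cdot o(1)}$ would require approximations so fine that counting them is as hard as the original problem.

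Two further omissions: you never introduce the parameter $t$ (the degree bound on $A_*$), whereas the paper's deduction requires optimizing $t$ in the range $\sqrt{d/\log d}\le t\le\sqrt d\,\log^{3/2}d$ to balance the loss $t/(2d-t)$ in Proposition~\ref{lem:number-of-cutsets-with-approx} against the loss $C\log d/t$ in Proposition~\ref{lem:family-of-approx}; without this trade-off neither term alone is $O(\log^{3/2}d/\sqrt d)$. And your description of the approximation itself is off: in the paper $A_\ins$ is an \emph{odd set} contained in $S$ and $A_\out$ an \emph{even set} contained in $S^c$ (not subsets of $\Even$ and $\Odd$ respectively), and the key structural requirement is that the unknown region $A_*$ induce a subgraph of maximum degree at most $t$ with no isolated vertices --- this is precisely what makes $D(S)$ a minimal vertex-cover and yields the exponent $n/(2d-t)$.
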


As discussed in the introduction, the proof is based on a classification of odd $r$-cutsets according to their approximate global structure. To this end, we require some definitions.
An \emph{approximation} is a pair $A=(A_\ins,A_\out)$ of disjoint subsets of $\Z^d$ such that $A_\ins$ is odd and $A_\out$ is even.
We say that $A$ approximates an odd set $S$ if $A_\ins \subset S$ and $A_\out \subset S^c$. Thus, we think of $A_\ins$ as the set of vertices known to be in $S$,
$A_\out$ as the vertices known to be outside $S$, and $A_* := (A_\ins \cup A_\out)^c$ as the
vertices whose association is unknown.

Let $1 \le t < 2d$ be an integer.
A \emph{$t$-approximation} is an approximation $A$ such that the subgraph of $\Z^d$ induced by $A_*$ has maximum degree at most $t$ and has no isolated vertices. For an illustration of these notions, see Figure~\ref{fig:approx}.
It is instructive to notice that if a \mbox{$t$-approximation} $A$ approximates $S$, then any unknown vertex is near the boundary in the sense that $A_* \subset (\intextB S)^+$; see~\eqref{eq:def-D} below.

\begin{figure}
	\captionsetup{width=0.87\textwidth}
	\captionsetup[subfigure]{justification=centering}
    \begin{subfigure}[t]{.35\textwidth}
        \includegraphics[scale=0.29]{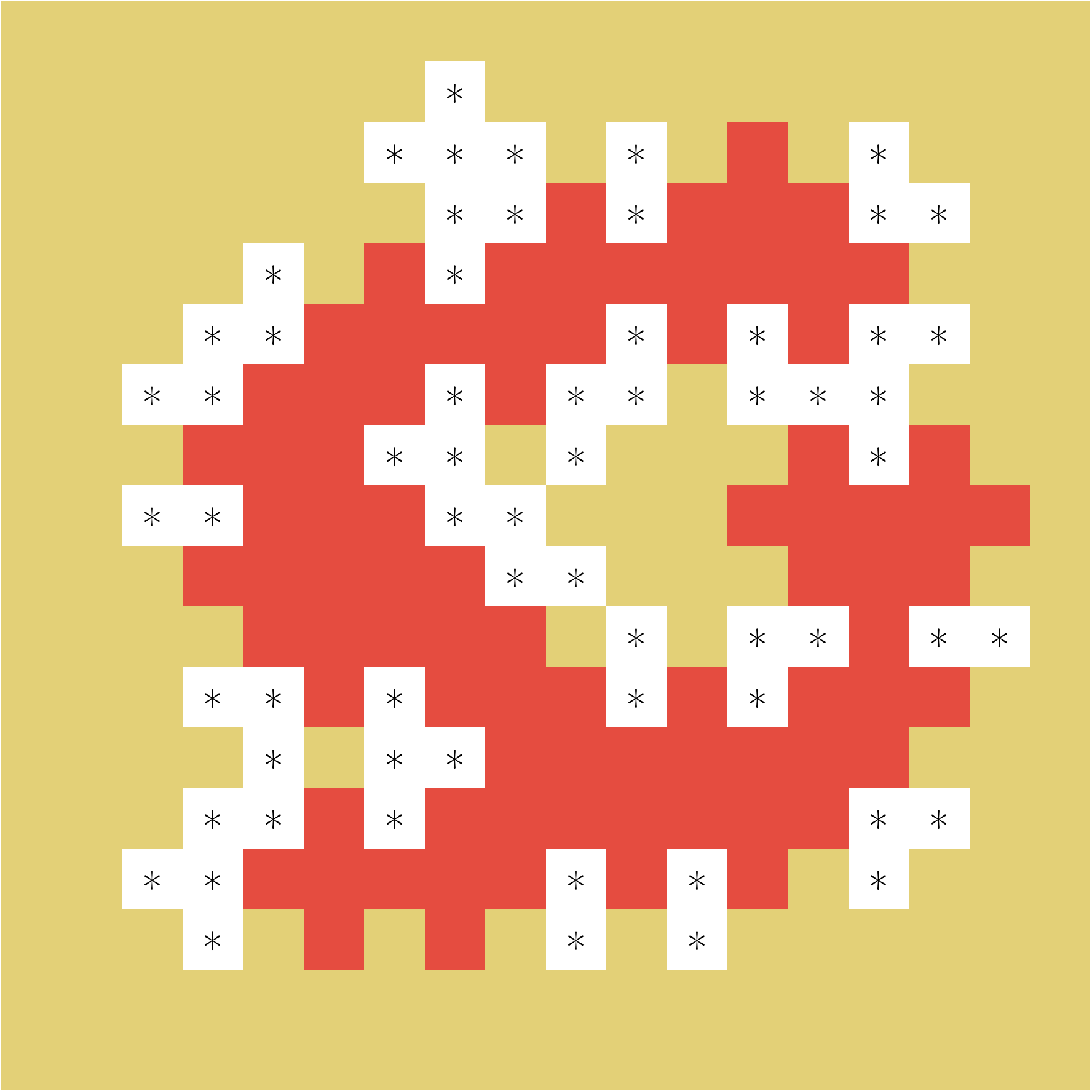}
        \caption{An approximation $A$.}
        \label{fig:approx-sample}
    \end{subfigure}%
    \begin{subfigure}{2pt}
		~
    \end{subfigure}%
    \begin{subfigure}[t]{.65\textwidth}
        \includegraphics[scale=0.29]{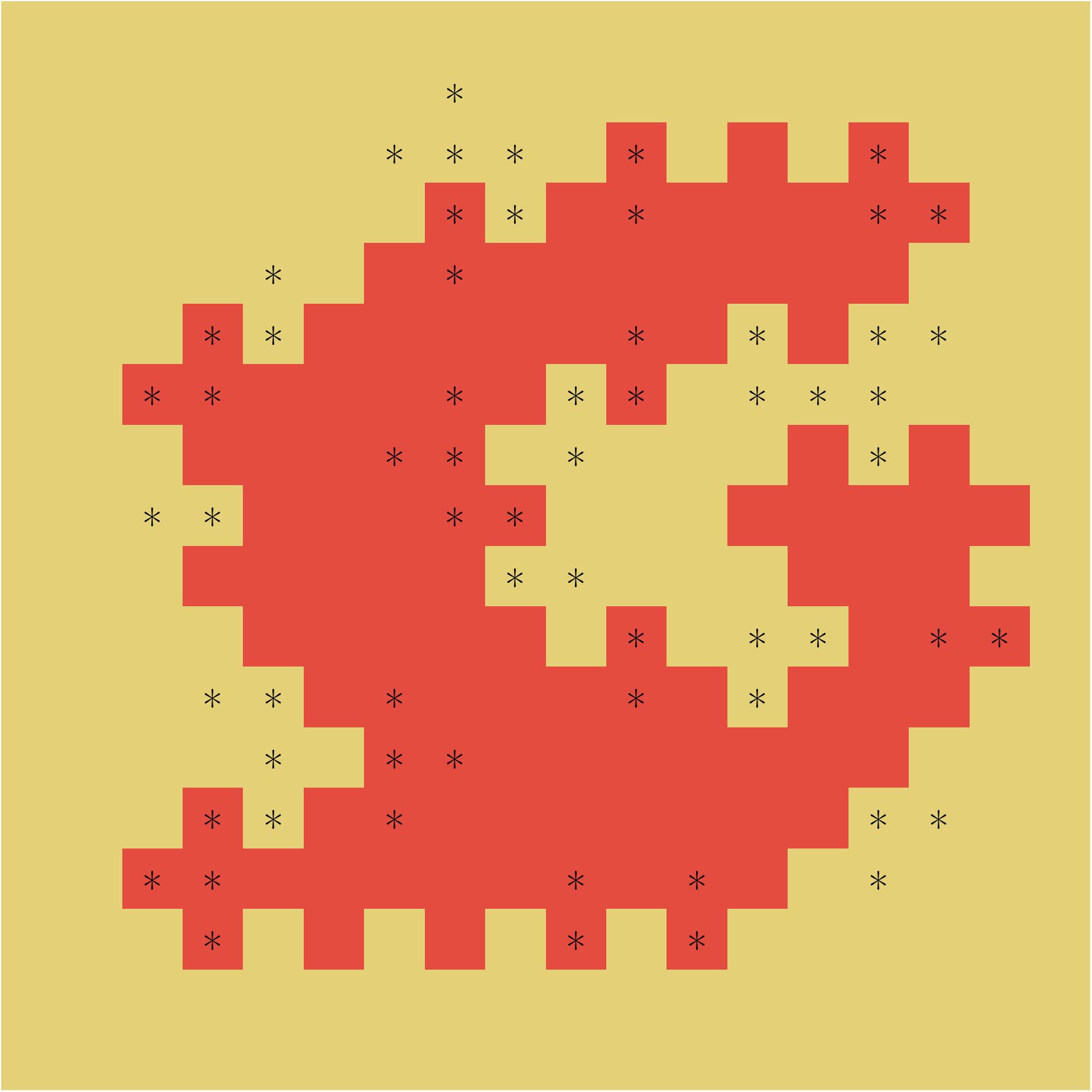}~~%
        \includegraphics[scale=0.29]{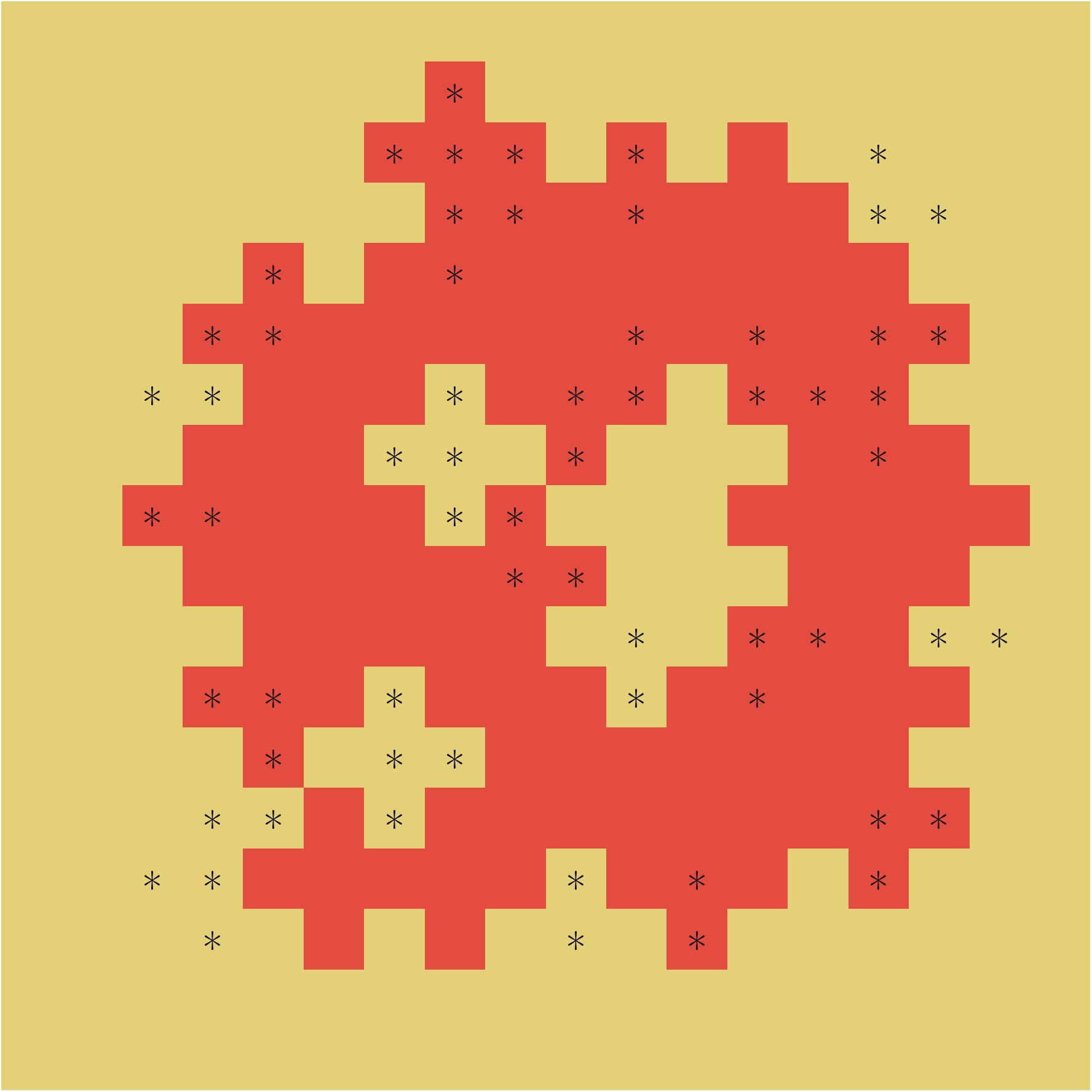}
        \caption{Two possible regular odd sets approximated by $A$.}
        \label{fig:approx-sets}
    \end{subfigure}
    \caption{An approximation and two regular odd sets approximated by it are illustrated. Vertices belonging to $A_\ins$ ($A_\out$) are known to be in $S$ ($S^c$); these are depicted in~(\textsc{\subref{fig:approx-sample}}) by a red (yellow) background. The remaining vertices belong to $A_* = (A_\ins \cup A_\out)^c$ and are unknown to be in $S$ or $S^c$; these are depicted by $*$ and a white background.}
    \label{fig:approx}
\end{figure}

We now give two key propositions which summarize the role of $t$-approximations in our proof of the upper bound.
We henceforth fix the dimension $d \ge 2$ and omit the explicit dependence on $d$ in the notation.
Denote by $\cut$ the collection of regular odd sets and by $\cut_n$ the collection of $S \in \cut$ having $|\partial S| = n$.
For an approximation $A$, denote by $\cut(A)$ the collection of $S \in \cut$ which are approximated by $A$.
We extend this notation to a family of approximations $\cA$, by setting
$\cut(\cA) := \cup_{A\in \cA} \cut(A)$.
Our first proposition justifies our notions of approximation by bounding the number of regular odd sets approximated by a given $t$-approximation.

\begin{prop}\label{lem:number-of-cutsets-with-approx}
	For any integers $n \ge 1$ and $1 \le t < 2d$ and any $t$-approximation $A$, we have
	\[ |\cut_n(A)| \le 2^{n/(2d-t)} .\]
\end{prop}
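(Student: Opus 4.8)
The plan is to fix a $t$-approximation $A$ and a regular odd set $S\in\cut_n(A)$, and to show that $S$ is determined by its restriction to the unknown region $A_*$, together with a choice of relatively few bits. Since $A_\ins\subset S$ and $A_\out\subset S^c$ are fixed by $A$, knowing $S$ is equivalent to knowing the set $W:=S\cap A_*$ of unknown vertices that land inside $S$. The key structural point is that $A_*$ induces a subgraph of $\Z^d$ of maximum degree at most $t$ with no isolated vertices, so $A_*$ decomposes into connected components, each of which is a connected graph of maximum degree $\le t$ and at least two vertices. The regularity and oddness of $S$ will force a strong constraint on how $W$ can intersect each such component.

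The first step is to record the edge count. For each unknown vertex $v\in A_*$, all $2d$ of its lattice neighbors are accounted for: at most $t$ of them lie in $A_*$, and the remaining $\ge 2d-t$ lie in $A_\ins\cup A_\out$. I would argue that every such vertex $v$ is adjacent to a boundary edge of $S$ in a way that can be charged, and more precisely that the total number of boundary edges incident to $A_*$ is at least $(2d-t)\cdot|\partial^s S|$-type quantity; combined with Lemma~\ref{lem:odd-set-boundary-size} ($|\partial S|=2d|\partial^s S|$) this should give $|A_*\cap S|\cdot(\text{something})\le n$, or more cleanly, that the number of components of $A_*$, or the number of ``active'' unknown vertices, is at most $n/(2d-t)$. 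The cleanest route is: each component $K$ of $A_*$, being connected with $\ge 2$ vertices, contains at least one vertex, and each vertex of $K$ has $\ge 2d-t$ neighbors outside $A_*$; one then shows each component $K$ of $A_*$ contributes at least $2d-t$ distinct boundary edges of $S$ (edges from $K\cap S$ to $A_\out$, or from $K\cap S^c$ to $A_\ins$, using that $K$ is connected and meets both $A_\ins^{+}$ and $A_\out^{+}$). This yields that the number of components of $A_*$ is at most $n/(2d-t)$.

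The second step is to show that within each component $K$ of $A_*$, the set $W\cap K = S\cap K$ is determined by a single bit — more precisely, that there are at most two possibilities for $S\cap K$. Here oddness and regularity enter: if $v\in A_*$ is even and $v\in S$, then since $S$ is odd we need $v^+\subset S$, forcing all neighbors of $v$ into $S$; if $v\in A_*$ is odd and $v\notin S$, then since $S^c=(\Odd\cap S^c)^+$ (regular odd complement) we need $v^+\subset S^c$. Propagating these forced inclusions along the connected component $K$ (which has bounded degree, so this propagation is finite and local), one sees that specifying membership of a single vertex of $K$ in $S$ determines the status of all of $K$: the component ``snaps'' into one of two configurations. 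Hence the number of choices of $S\cap K$ is at most $2$, and so $|\cut_n(A)|\le 2^{(\#\text{components of }A_*)}\le 2^{n/(2d-t)}$.

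I expect the main obstacle to be the second step — rigorously showing that each component of $A_*$ has only two admissible fillings. One must check that the forced-propagation argument is consistent (does not over-determine or contradict itself) and genuinely covers the whole component rather than just a sub-piece; the bounded-degree and no-isolated-vertex hypotheses on $A_*$ are exactly what make this work, but verifying that the propagation reaches every vertex of $K$ and respects the even/odd bipartition of $K$ requires care. A secondary technical point is making the edge-counting in the first step injective/disjoint across components so that the bound $n/(2d-t)$ is clean; this should follow because boundary edges incident to distinct components of $A_*$ are distinct, but one should double-check edges with one endpoint in $A_*$ and the other in $A_\ins\cup A_\out$ versus edges between $A_\ins$ and $A_\out$ directly.
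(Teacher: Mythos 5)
Your step~2 is wrong, and the obstacle you yourself flag is fatal rather than technical: the forcing rules are one-directional (an even vertex in $S$ pushes $S$-membership outward, an odd vertex in $S^c$ pushes $S^c$-membership outward, but an even vertex in $S^c$ and an odd vertex in $S$ force nothing locally), so a component of $A_*$ need not ``snap'' to one of two fillings. Concretely, take $A_*$ to be an induced path $v_1\sim v_2\sim v_3\sim v_4$ with $v_1,v_3$ odd and $v_2,v_4$ even. Since $A_\ins$ is odd and $A_\out$ is even, the remaining neighbours of $v_1,v_3$ all lie in $A_\out\subset S^c$ and those of $v_2,v_4$ all lie in $A_\ins\subset S$; checking the $16$ assignments against oddness and regularity of $S$ leaves \emph{three} admissible fillings, namely all four in $S$, all four in $S^c$, and $v_1,v_2\in S^c$ with $v_3,v_4\in S$. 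For a path of $2k$ vertices the count is $k+1$, so the number of fillings per component grows without bound and $2^{\#\mathrm{components}}$ is simply not an upper bound on $|\cut_n(A)|$.

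The paper sidesteps this by not counting per component. For $S\in\cut(A)$ it records $D(S)=A_*\cap\intextB S$ (equivalently, the odd vertices of $A_*$ lying in $S$ together with the even vertices of $A_*$ lying in $S^c$), proves $S\mapsto D(S)$ is injective with image in the set $\MC(A_*)$ of minimal vertex covers of the graph induced on $A_*$, and proves $|D(S)|\le|\partial S|/(2d-t)$ by a charging argument close in spirit to your step~1. The decisive ingredient you are missing is Lemma~\ref{lem:sum-over-minimal-covers}, which with all weights equal to $1/2$ gives $|\cU|\le\max_{U\in\cU}2^{|U|}$ for any family $\cU$ of minimal vertex covers; applied to $\{D(S):S\in\cut_n(A)\}$ it yields $2^{n/(2d-t)}$. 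In the path example every admissible filling has $|D(S)|=k$, so the paper's bound $2^k$ comfortably covers the $k+1$ fillings, whereas $2^1$ does not. Note that your step~1 is a weakening of $|D(S)|\le n/(2d-t)$, since a minimal vertex cover meets every component of $A_*$; so the overall shape of the argument is right, but the per-component ``one bit'' bookkeeping must be replaced by the minimal-vertex-cover inequality.
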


Our second proposition shows that a small family of $t$-approximations suffices to approximate every set in $\OC{n,r}$.

\begin{prop}\label{lem:family-of-approx}
	There exists a constant $C>0$ such that for any integers $n,r \ge 1$ and $1 \le t < 2d$, there exists a family $\cA$ of $t$-approximations of size
	\[ |\cA| \le \exp\left(Cnr \big(\tfrac{\log d}{d}\big)^{3/2} + \tfrac{Cn \log d}{dt} \right) \]
	such that every $S \in \OC{n,r}$ is approximated by some element in $\cA$, i.e., $\OC{n,r} \subset \cut(\cA)$.
\end{prop}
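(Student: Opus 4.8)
\textbf{Proof proposal for Proposition~\ref{lem:family-of-approx}.}

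The plan is to build the family $\cA$ in two stages, following the Sapozhenko--Galvin--Kahn paradigm: first construct a coarse approximation that captures the large-scale geometry of $S$, then refine it into a bona fide $t$-approximation. Fix $S \in \OC{n,r}$ and write $m := n/2d = |\partial^s S|$ for the common directional boundary size (Lemma~\ref{lem:odd-set-boundary-size}). The key structural fact, which I would establish first, is that the set of ``unknown-candidate'' vertices — vertices within bounded distance of $\intextB S$ — is comparable in size to $m$ up to polynomial-in-$d$ factors; combined with the isoperimetric inequality (Lemma~\ref{lem:isoperimetry}) this controls how much information an approximation must record. The two terms in the bound on $|\cA|$ should arise from the two stages: the $Cnr(\log d/d)^{3/2}$ term from encoding the coarse shape (the geometry of the ``$r$-connected boundary''), and the $Cn\log d/(dt)$ term from the refinement step, where we must pay to reduce the maximum degree of the unknown region from something like $2d$ down to $t$.

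For the first stage, I would use a clustering/covering argument: since $S$ is an $r$-cutset, both $S$ and $S^c$ are connected in $(\Z^d)^{\otimes r}$, so one can greedily select a set $W$ of ``centers'' in $\intB S$ that $r$-covers $\intB S$ and such that the induced $r$-graph on $W$ is connected with bounded local structure. The number of such center-sets, as $S$ ranges over $\OC{n,r}$, is at most $\exp(Cnr(\log d/d)^{3/2})$ — this is where the $(\log d/d)^{3/2}$ saving comes from, and it mirrors the ``container''-type counting in~\cite{galvin2004phase} and~\cite{feldheimspinka}: a connected subgraph of a graph of maximum degree $D$ with $N$ vertices containing a fixed vertex is counted by roughly $(eD)^N$, and here the relevant degree bound for the $r$-neighborhood structure, after using that $S$ is \emph{odd} (which forces $2^d$-type rigidity locally, cf.\ $u^+ \subset S$ for even $u \in S$), is polynomial in $d$ rather than exponential. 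Declaring $A_\ins^{(0)}$ to be the even vertices forced into $S$ by the recorded coarse data and $A_\out^{(0)}$ the even vertices forced out, one gets a crude approximation depending only on bounded information; the vertices not yet decided form a set $A_*^{(0)}$ contained in a bounded neighborhood of the recorded boundary, hence of size $O(\mathrm{poly}(d)\cdot m)$.

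For the second stage — the refinement — the induced subgraph on $A_*^{(0)}$ may have vertices of degree close to $2d$, which is too large. I would iteratively move vertices from $A_*^{(0)}$ into $A_\ins$ or $A_\out$ (respecting oddness: only even vertices together with their $+$-neighborhoods, or their complements) so as to drive the maximum degree down to $t$, and also delete any isolated vertices that appear, recording at each step a bounded amount of choice. The number of bits spent here is proportional to the number of vertices reclassified, which one bounds by $|A_*^{(0)}| / (2d/t) \cdot O(\log d) = O(n\log d/(dt))$ using that each high-degree vertex, once its status is fixed, removes $\Omega(2d)$ incident unknown edges while the encoding cost per vertex is $O(\log d)$; the $2d$ in the denominator is exactly what produces the $1/(dt)$ scaling. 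The main obstacle is executing this refinement so that (i) the resulting $A$ is genuinely a $t$-approximation (max degree $\le t$, no isolated vertices in $A_*$), (ii) it still approximates the \emph{original} $S$, and (iii) the reclassification map is injective enough — i.e.\ the recorded data determines $A$ — so the counting closes. Getting the bookkeeping of (iii) right, while keeping the per-vertex cost at $O(\log d)$ and not $O(\log |A_*^{(0)}|)$, is the delicate point, and it is here that the simplifications of~\cite{feldheimspinka} over~\cite{galvin2004phase,sapozhenko1987onthen} are presumably invoked.
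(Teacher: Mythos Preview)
Your two-stage plan (coarse approximation, then refinement to a $t$-approximation) is the paper's architecture, but both stages have a gap in the mechanism that produces the stated exponents.

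In Stage~1 the cost of enumerating connected subsets of $(\Z^d)^{\otimes(r+O(1))}$ is $\exp(Ckr\log d)$ where $k$ is the size of the set being enumerated; to land on $\exp\big(Cnr(\log d/d)^{3/2}\big)$ you must produce, for each $S$, a determining set of size $k \le Cn\sqrt{\log d}/d^{3/2}$. A greedy $r$-cover of $\intB S$ does not obviously achieve this (note $|\intB S|$ can be of order $n/d$, and for $r=1$ there is no further compression from the cover). The paper obtains such a small set via a geometric argument specific to odd sets: the four-cycle property (Lemma~\ref{lem:four-cycle-property}) shows every boundary edge has an endpoint with at least $d$ boundary edges (a ``revealed'' vertex), and then a two-scale Lov\'asz covering argument (Lemma~\ref{lem:existence-of-covering2}, with an intermediate threshold $s=\sqrt{d\log d}$) compresses the revealed set into a set $U$ of size $O(n\sqrt{\log d}/d^{3/2})$ with $N(U)$ separating $S$ (Lemma~\ref{lem:existence-of-U}). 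Your appeal to ``$2^d$-type rigidity'' does not supply this step.

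In Stage~2 your arithmetic does not close: a single pass of the refinement applied with parameter $t$ to an approximation with unknown set $A_*^{(0)}$ costs $\exp\big(C|A_*^{(0)}|\log d/t\big)$, and with $|A_*^{(0)}|$ of order $n\sqrt{\log d/d}$ (as the paper obtains --- and certainly not smaller than $n/d$) this is much larger than $\exp\big(Cn\log d/(dt)\big)$; the displayed identity $|A_*^{(0)}|/(2d/t)\cdot O(\log d)=O(n\log d/(dt))$ is simply false in general. The paper resolves this by refining in \emph{two passes} (Lemma~\ref{lem:family-of-full-approx}): first with $t_1=d$, at cost $\exp\big(C|A_*^{(0)}|\log d/d\big)$, which is absorbed into the first term of the proposition and yields a $d$-approximation $B$; then the tightness bound (Lemma~\ref{lem:tightness}) gives $|D(S)|\le n/d$ for every $S\in\cut_n(B)$, so the second pass with $t_2=t$ costs only $\exp\big(C(n/d)\log d/t\big)$. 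This bootstrap via Lemma~\ref{lem:tightness} is the missing idea in your sketch.
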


The proofs of Proposition~\ref{lem:number-of-cutsets-with-approx} and Proposition~\ref{lem:family-of-approx} are given in Section~\ref{sec:number-of-cutsets-with-approx} and Section~\ref{sec:approx}, respectively. Equipped with these propositions, we are now ready to prove the upper bound.

\begin{proof}[Proof of upper bound in Theorem~\ref{thm:strong-upper}]
	Let $d \ge 2$, $1 \le t < 2d$ and $n,r \ge 1$ be integers. Let $\cA$ be a family of $t$-approximations obtained by applying Proposition~\ref{lem:family-of-approx}.
	By Proposition~\ref{lem:number-of-cutsets-with-approx},
	\[
	|\OC{n,r}| \le \sum_{A \in \cA} |\cut_n(A)| \le |\cA| \cdot 2^{n/(2d-t)} \le 2^{\tfrac{n}{2d}\Big(1 + \tfrac{t}{2d-t} + \tfrac{Cr \log^{3/2} d}{\sqrt{d}} + \tfrac{C \log d}{t} \Big)}.
	\]
	Substituting any integer $t$ satisfying $\sqrt{d / \log d} \le t \le \sqrt{d} \log^{3/2} d$ yields the theorem.
\end{proof}

%
\section{Counting regular odd sets with a given approximation}
\label{sec:number-of-cutsets-with-approx}
%

In this section, we prove Proposition~\ref{lem:number-of-cutsets-with-approx}. That is, our goal is to prove an upper bound on the number of regular odd sets with given boundary size which are approximated by a particular $t$-approximation.
The proof is based on an analysis of minimal vertex-covers.

We henceforth fix an integer $1 \le t < 2d$ and a $t$-approximation $A=(A_\ins,A_\out)$.
Recall our notation $A_* := (A_\ins \cup A_\out)^c$.
For $S \in \cut(A)$, define
\begin{equation}\label{eq:def-D}
\begin{aligned}
D_{\ins}(S) &:= A_* \cap \intB S = \Odd \cap A_* \cap S ,\\
D_{\out}(S) &:= A_* \cap \extB S = \Even \cap A_* \cap S^c ,
\end{aligned}
\end{equation}
where the first equality follows from $\Odd \cap A_* \cap S \subset N(A_\out) \subset N(S^c)$, which in turn uses the facts that $A_\ins$ is odd and the maximum degree of $A_*$ is strictly less than $2d$; the second equality follows similarly.
Define also
\[ D(S) := D_{\ins}(S) \cup D_{\out}(S) = A_* \cap \intextB S .\]
Two key properties of this definition are that $S$ is determined by $D(S)$ and that $D(S)$ is a minimal vertex-cover of $A_*$ (see Figure~\reffig{fig:recovery}). This is stated precisely in the following lemma.

A {\em vertex-cover} of a graph $G$ is a subset of vertices $U \subset V(G)$ satisfying that every edge of $G$ has an endpoint in $U$.
A vertex-cover is \emph{minimal} if it is minimal with respect to inclusion.
Denote by $\MC(G)$ the set of all minimal vertex-covers of $G$.
For a set $V \subset \Z^d$, we also write $\MC(V)$ for the set of minimal covers of the subgraph of $\Z^d$ induced by $V$.

\begin{lemma}\label{lem:D-properties}
	The map $S \mapsto D(S)$ is an injective map from $\cut(A)$ to $\MC(A_*)$
\end{lemma}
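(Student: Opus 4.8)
The plan is to prove the two claimed properties separately: first that $D(S)$ is a minimal vertex-cover of the subgraph induced by $A_*$, and then that the map $S \mapsto D(S)$ is injective, which we will establish by exhibiting an explicit inverse (thereby also making the injectivity self-evident). Throughout I will use the two identities in~\eqref{eq:def-D}, namely $D_\ins(S) = \Odd \cap A_* \cap S$ and $D_\out(S) = \Even \cap A_* \cap S^c$, together with the characterization that a set $U$ is regular odd if and only if $U = (\Even \cap U)^+$ and $U^c = (\Odd \cap U^c)^+$.

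First I would show $D(S)$ is a vertex-cover of $A_*$. Let $\{u,v\}$ be an edge with both endpoints in $A_*$; since $A_*$ has no odd-odd or even-even edges (the lattice is bipartite), exactly one of them, say $u$, is odd and the other, $v$, is even. I claim that $u \in S$ or $v \notin S$: indeed, if $v \in S$ then since $v$ is even and $S$ is odd we have $v^+ \subset S$ (using $S = (\Even \cap S)^+$), so $u \in S$. In the first case $u \in \Odd \cap A_* \cap S = D_\ins(S)$; in the second case $v \in \Even \cap A_* \cap S^c = D_\out(S)$. Either way the edge is covered. For minimality, I must check that no proper subset of $D(S)$ covers $A_*$, i.e.\ that every vertex of $D(S)$ is ``necessary''. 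Take $u \in D_\ins(S)$, so $u$ is odd and in $S$; since the first identity of~\eqref{eq:def-D} identifies $D_\ins(S)$ with $A_* \cap \intB S$, the vertex $u$ has a neighbor $w \in S^c$. Because $A_\ins$ is odd and $\max$-degree of $A_*$ is strictly less than $2d$ — the very facts invoked after~\eqref{eq:def-D} — one shows $w \in A_*$ (this is where the degree bound is used: $u$ has all $2d$ neighbors, not all can lie in $A_\out \cup A_*$-complement constraints force $w \in A_*$), and $w$ is even, so $w \in \Even \cap A_* \cap S^c = D_\out(S)$ is \emph{not} in $D(S) \setminus \{u\}$ unless... wait, $w \in D_\out(S) \subset D(S)$. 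The point is subtler: the edge $\{u,w\}$ lies in $A_*$ and is covered by $u$, but is it covered by $D(S) \setminus \{u\}$? Its other endpoint $w$ is in $D_\out(S)$, so yes it would still be covered. So minimality needs the stronger statement that each $u \in D_\ins(S)$ has a \emph{private} edge — a neighbor $w \in A_*$ with $w \notin D(S)$. Such $w$ must be even and in $S$ (so that $w \notin D_\out(S)$), i.e.\ $w \in \Even \cap A_* \cap S$; but then by oddness $w^+ \subset S$, contradicting $u$ having a neighbor outside $S$ via... no. I think the correct route is: $u \in D_\ins(S) = A_* \cap \intB S$ has a neighbor in $S^c$, but $u$ being in $A_*$ means $u$ is an isolated vertex of $A_*$ is impossible, so $u$ has a neighbor $v \in A_*$; if $v \in S$ then (v even, oddness) $v^+ \subset S$ so this doesn't directly help, and if $v \notin S$ then $v \in D_\out(S)$, already covered. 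So the private edge argument must come from the \emph{complement} side: one uses that $A_*$ has no isolated vertices and the regular-odd structure to produce, for each $u \in D(S)$, an edge in $A_*$ incident to $u$ whose other endpoint is \emph{not} in $D(S)$; this is where I expect the main technical work, and it is exactly the content that the degree-$<2d$ and no-isolated-vertices hypotheses on a $t$-approximation are designed to supply.

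Second, I would construct the inverse map, showing $S$ is recoverable from $D := D(S)$ and $A$. Define $S' := \big(A_\ins \cup (\Odd \cap D)\big)^+$; the claim is $S' = S$. Since $A_\ins \subset S$, $\Odd \cap D = D_\ins(S) \subset S$, and $S$ is regular odd (so $S = (\Even \cap S)^+ \supset$ any set of the form $(\text{odd vertices of }S)^+$ — actually one uses $X^+ \subset S$ whenever $X \subset S$ and every vertex of $X$ has $X^+$-neighborhood landing in $S$, which holds since $S$ is odd and these are odd vertices whose closed neighborhoods meet $S$ appropriately), we get $S' \subset S$. For the reverse inclusion, take $v \in S$; if $v$ is even then $v \in A_\ins$ (if $v \in A_*$ then $v \in \Even \cap A_* \cap S$ which is disjoint from $D_\out(S)$ and... this needs that even vertices of $S$ in $A_*$ don't exist, or are handled) or $v$'s membership is forced by a neighbor. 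The clean formulation: every even vertex of $S$ is either in $A_\ins$ or has all its neighbors in $S$, and tracing the odd internal boundary $\intB S \cap \Odd = D_\ins(S) \cup (A_\ins \cap \Odd \cap \intB S)$, one peels off $S$ layer by layer and sees it equals $S'$. Granting this, injectivity is immediate since $S$ is a function of $D(S)$. I expect the genuinely delicate point to be the minimality of the vertex-cover — specifically producing private edges — because the vertex-cover property and the recovery (injectivity) are fairly direct consequences of~\eqref{eq:def-D} and the regular-odd characterization, whereas minimality forces one to exploit, for each boundary vertex, that the $t$-approximation leaves enough of its neighborhood ``unknown'' that at least one incident $A_*$-edge is uniquely its responsibility.
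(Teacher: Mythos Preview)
Your vertex-cover argument is correct and matches the paper's. However, there are genuine gaps in both remaining parts.

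\textbf{Injectivity.} Your reconstruction formula $S' := \big(A_\ins \cup (\Odd \cap D)\big)^+$ is wrong. Take any $u \in D_\ins(S) = A_* \cap \intB S$: by definition $u$ has a neighbor in $S^c$, so $u^+ \not\subset S$ and hence $S' \not\subset S$. The correct reconstruction (which the paper uses) replaces $(\cdot)^+$ by $N_{2d}$: from $\Odd \cap S = (\Odd \cap A_\ins) \cup D_\ins$ one recovers $S = (\Odd \cap S) \cup N_{2d}(\Odd \cap S)$, since for a regular odd set an even vertex lies in $S$ if and only if \emph{all} of its neighbors do. Your own hesitations (``this needs that even vertices of $S$ in $A_*$ don't exist, or are handled'') were pointing at exactly this issue; such even vertices do exist, and they are recovered precisely through $N_{2d}$.

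\textbf{Minimality.} You correctly identify the goal --- for each $v \in D(S)$ find a neighbor in $A_*$ that is \emph{not} in $D(S)$ --- but you do not find the argument, and you misattribute the difficulty to the ``degree $<2d$'' and ``no isolated vertices'' hypotheses. Neither is the crux here. The key observation you are missing is this: if $v \in D_\ins$ is odd and $v \notin A_\ins$, then $N(v) \cap A_\ins = \emptyset$, because $A_\ins$ is an odd set (every even vertex of $A_\ins$ has its entire neighborhood inside $A_\ins$, so no odd vertex outside $A_\ins$ can be adjacent to it). Now argue by contradiction: if $N(v) \cap A_* \subset D$, then $N(v) \cap A_* \subset D_\out \subset S^c$ (the neighbors are even), whence $N(v) \subset A_\out \cup D_\out \subset S^c$. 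But $v \in S$ and $S$ is regular, so $v$ must have a neighbor in $S$ --- contradiction. This is short and uses only that $A_\ins$ is odd and $S$ is regular.
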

\begin{proof}
	Let $S \in \cut(A)$ and denote $D_\ins := D_\ins(S)$, $D_\out := D_\out(S)$ and $D := D_\ins \cup D_\out$.
	To see that the map is injective, it suffices to reconstruct $S$ from $D$. In fact, we can reconstruct $S$ both from $D_\ins$ and from $D_\out$, separately.
Indeed, as $A_\ins \subset S$ and $A_\out \subset S^c$, it follows that
\[ \Odd \cap S = \Odd \cap A_\ins \cup D_\ins \quad\text{and}\quad \Even \cap S^c = \Even \cap  A_\out \cup D_\out ,\]
and since $S$ is regular odd, $S$ is determined by $\Odd \cap S$ via $S = (\Odd \cap S) \cup N_{2d}(\Odd \cap S)$ and by $\Even \cap S^c$ via $S = (\Even \cap S)^+ = (\Even \setminus (\Even \cap S^c))^+$.

	Next, we show that $D$ is a vertex-cover of $A_*$.
	To this end, let $u,v \in A_*$ be a pair of adjacent vertices, and assume without loss of generality that $u$ is odd and $v$ is even.
	Assume towards obtaining a contradiction that neither $u$ nor $v$ belong to $D$, and observe that in this case $u \notin S$ and $v \in S$, which is impossible since $S$ is odd. Hence either $u\in D$ or $v\in D$.
	
	Finally, we show that $D$ is a minimal vertex-cover.
	To this end, let $v \in D$ and assume towards a contradiction that $N(v) \cap A_* \subset D$. Assume without loss of generality that $v$ is odd, so that $v \in D_\ins$ and $N(v) \cap A_* \subset D_\out$. Since $A_\ins$ is odd, $v$ is odd and $v \notin A_\ins$, we have $N(v) \cap A_\ins = \emptyset$. Thus, $N(v) \subset A_\out \cup D_\out \subset S^c$, which is impossible since $v \in S$ and $S$ is regular.
\end{proof}

\begin{figure}
	\centering
	\begin{subfigure}[t]{.32\textwidth}
		\centering
		\includegraphics[scale=0.4]{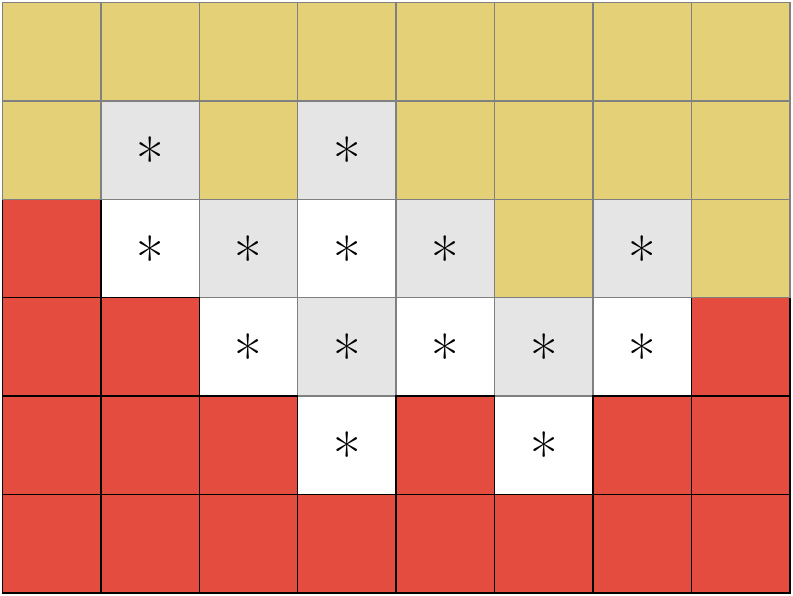}
		\caption{A region of unknown vertices in $A_*$. These vertices are denoted by $*$ (with odd vertices having a gray background). The vertices in $A_\ins$ and $A_\out$ are shown in red and yellow, respectively.}
		\label{fig:recovery-1}
	\end{subfigure}%
	\begin{subfigure}{20pt}
		\quad
	\end{subfigure}%
	\begin{subfigure}[t]{.62\textwidth}
		\centering
		\includegraphics[scale=0.4]{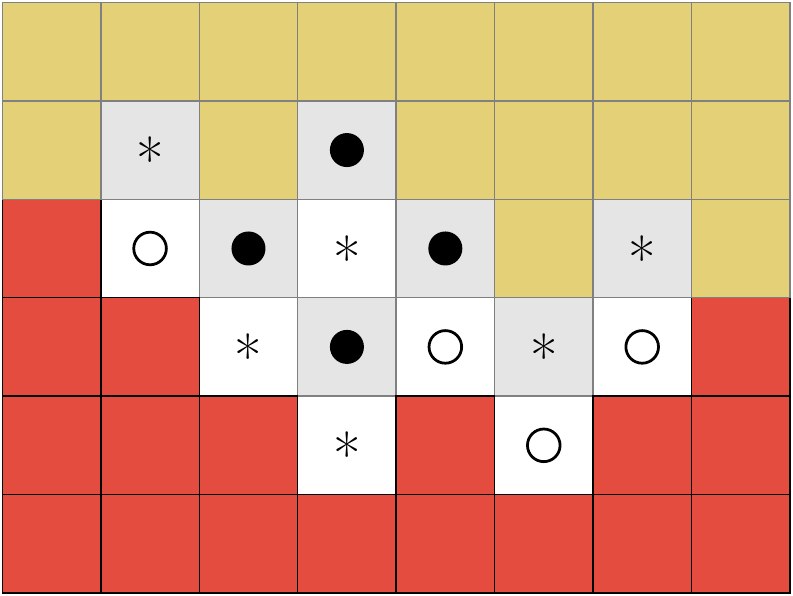}\qquad
		\includegraphics[scale=0.4]{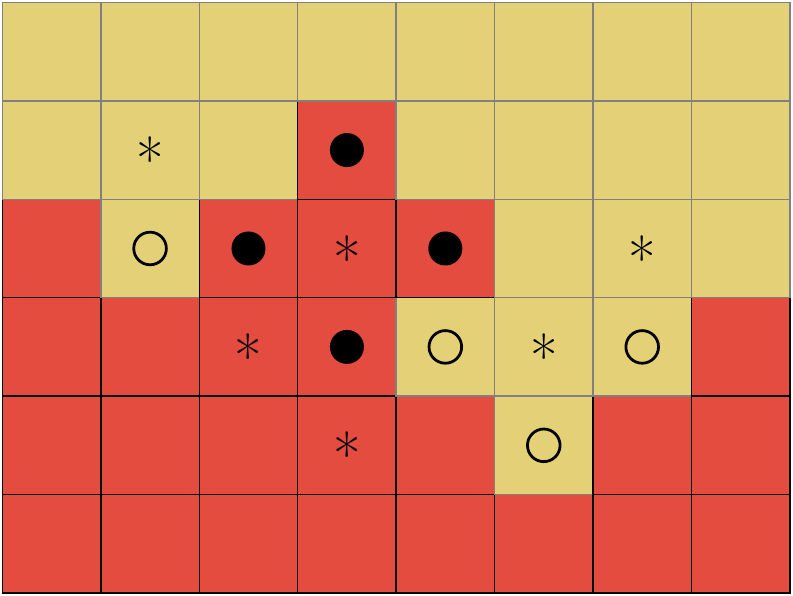}
		\caption{An example of $(D_{\ins},D_{\out})$ and its corresponding regular odd set. The property that $D_{\ins} \cup D_{\out}$ is a minimal vertex-cover of $A_*$ is manifested in the figure by the fact that there are no two adjacent $*$. The corresponding regular odd set is obtained by adding each vertex in $D_{\ins}$ to $S$ and each vertex in $D_{\out}$ to $S^c$, and then determining the remaining vertices according to their neighbors.}
		\label{fig:recovery-2}
	\end{subfigure}
	\caption{The figure illustrates the process of recovering $S$ from $D_{\ins}$ and $D_{\out}$.}
	\label{fig:recovery}
\end{figure}

We require the following lemma from~\cite{feldheimspinka}.
\begin{lemma}[{\cite[Lemma~4.9]{feldheimspinka}}]\label{lem:sum-over-minimal-covers}
	Let $G$ be a finite graph and let $\{p_v\}_{v \in V(G)}$ be non-negative numbers satisfying $p_u + p_v \le 1$ for all $\{u,v\} \in E(G)$.
	Then
	\[ \sum_{U \in \MC(G)} \prod_{u \in U} p_u \le 1 .\]
\end{lemma}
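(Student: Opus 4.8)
The plan is to prove the inequality by induction on the number of vertices of $G$, choosing a convenient vertex to split on. Since the statement is about minimal vertex-covers, the natural dichotomy for a fixed vertex $v$ is: either $v$ belongs to the cover, or $v$ does not, in which case \emph{all} neighbors of $v$ must belong to the cover (by minimality, and by the covering property applied to the edges at $v$). So the first step is to fix any non-isolated vertex $v$ (isolated vertices contribute a harmless factor, see below) and partition $\MC(G)$ according to whether $v \in U$.

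Carrying this out, I would write
\[
\sum_{U \in \MC(G)} \prod_{u \in U} p_u = p_v \sum_{\substack{U \in \MC(G) \\ v \in U}} \prod_{u \in U \setminus \{v\}} p_u + \sum_{\substack{U \in \MC(G) \\ v \notin U}} \prod_{u \in U} p_u .
\]
For the second sum, every $U$ with $v \notin U$ contains $N(v)$; factoring out $\prod_{u \in N(v)} p_u$ and observing that $U \setminus N(v)$, after deleting $v^+ = \{v\} \cup N(v)$ from $G$, is (almost) a minimal vertex-cover of $G - v^+$, one gets a bound of the form $\big(\prod_{u \in N(v)} p_u\big) \cdot (\text{sum over } \MC(G - v^+))$, which by induction is at most $\prod_{u \in N(v)} p_u \le p_w$ for any single neighbor $w$ of $v$ (using $p_u \le 1$). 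For the first sum, each $U$ with $v \in U$ restricts to a minimal vertex-cover of $G - v$ once we are slightly careful about neighbors of $v$ that become isolated; induction bounds it by $1$. Combining, the whole sum is at most $p_v + p_w \le 1$ by the hypothesis on the edge $\{v,w\}$.

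The main obstacle — and the place where care is genuinely needed — is the bookkeeping of \emph{minimality} under vertex deletion: deleting $v$ (or $v^+$) can create new isolated vertices, and a minimal vertex-cover of $G$ does not restrict to a minimal vertex-cover of the deleted subgraph on the nose, only to something lying between a minimal cover and a minimal cover plus some low-$p$ slack. The clean way around this is to strengthen the induction hypothesis slightly, e.g. prove the bound for $\sum_{U}\prod_{u\in U} p_u$ where the sum is over all vertex-covers $U$ that are minimal \emph{relative to the non-isolated part} (equivalently, allow the ambient graph to carry ``free'' vertices contributing a factor $1$), so that the restriction maps land exactly where the induction can be applied. With that formulation the two recursive estimates go through verbatim and the base case (empty edge set, so the only minimal cover is $\emptyset$, giving the empty product $1$) is immediate.
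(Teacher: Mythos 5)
The paper does not reprove this lemma; it is quoted verbatim from \cite[Lemma~4.9]{feldheimspinka}, so there is no in-paper proof to compare against. Your argument is nonetheless correct and self-contained, and the decomposition by whether a fixed non-isolated vertex $v$ lies in $U$ is exactly the natural one.

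The one thing worth flagging is that the ``main obstacle'' you identify — that deleting $v$ or $v^+$ might break minimality of the restricted cover, so that you would need a strengthened induction hypothesis allowing ``free'' vertices — does not in fact arise. If $U \in \MC(G)$ and $v \in U$, then for every $u \in U\setminus\{v\}$ minimality of $U$ in $G$ supplies an edge $\{u,z\}$ with $z \notin U$; since $v\in U$ we have $z\neq v$ (and $u\neq v$), so this edge persists in $G-v$ and witnesses that $u$ cannot be dropped from $U\setminus\{v\}$. Combined with the trivial fact that $U\setminus\{v\}$ covers $G-v$, this shows $U\setminus\{v\}\in\MC(G-v)$ exactly. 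Similarly, if $v\notin U$ then $N(v)\subset U$, and for $u\in U\setminus N(v)$ the witness $z\notin U$ from minimality satisfies $z\neq v$ (else $u\in N(v)$) and $z\notin N(v)$ (since $N(v)\subset U$), so the witnessing edge lies in $G-v^+$ and $U\setminus N(v)\in\MC(G-v^+)$ exactly. Vertices that become isolated under deletion cause no trouble: they never lie in a minimal cover, and the same minimality argument shows they cannot have lain in the restricted set either. With this, both restriction maps are injections into $\MC$ of the smaller graph, the induction closes via $p_v+p_w\le 1$ for a neighbor $w$ of $v$ (using $p_u\le 1$ for every $u\in N(v)$), and the base case is as you say. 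In short, your proof works as stated, and more cleanly than you feared.
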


Applying this with $p_v=1/2$ for all $v \in V(G)$, yields
\[ |\mathcal{U}| \le \max_{U \in \mathcal{U}} 2^{|U|} , \quad\text{for any } \mathcal{U} \subset \MC(G) .\]
Hence, Lemma~\ref{lem:D-properties} implies that for any $n \ge 1$,
\[ |\cut_n(A)| \le |\{ D(S) : S \in \cut_n(A) \}| \le \max_{S \in \cut_n(A)} 2^{|D(S)|} .\]
Proposition~\ref{lem:number-of-cutsets-with-approx} is now an immediate consequence of
 the following lemma.

\begin{lemma}\label{lem:tightness}
For any $S \in \cut(A)$, we have
\[ |D(S)| \le \frac{|\partial S|}{2d-t} .\]
\end{lemma}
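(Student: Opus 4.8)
\textbf{Proof plan for Lemma~\ref{lem:tightness}.}

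The goal is to show $|D(S)| \le |\partial S|/(2d-t)$ for any $S \in \cut(A)$, where $D(S) = D_\ins(S) \cup D_\out(S)$ with $D_\ins(S) = \Odd \cap A_* \cap S$ and $D_\out(S) = \Even \cap A_* \cap S^c$. The plan is to produce, for each vertex of $D(S)$, a large number of boundary edges of $S$ attributed to it, in such a way that each boundary edge gets attributed at most once. Since each $v \in D(S)$ is in $A_*$, it has at most $t$ neighbors in $A_*$, hence at least $2d-t$ neighbors in $A_\ins \cup A_\out$; I will argue that all of these neighbors witness edges of $\partial S$.

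First I would fix $v \in D_\ins(S)$, so $v \in \Odd \cap S$. Any neighbor $u$ of $v$ lying in $A_\ins \cup A_\out$: since $A_\ins$ is odd and $v$ is odd with $v \notin A_\ins$, in fact $N(v) \cap A_\ins = \emptyset$, so every such $u$ lies in $A_\out \subset S^c$. Therefore the edge $\{v,u\}$, with $v \in S$ and $u \in S^c$, belongs to $\partial S$. Thus $v$ contributes at least $|N(v) \setminus A_*| \ge 2d - t$ distinct edges of $\partial S$, each of which has $v$ as its endpoint inside $\Odd \cap S$. Symmetrically, for $v \in D_\out(S)$, so $v \in \Even \cap S^c$, every neighbor in $A_\ins \cup A_\out$ actually lies in $A_\ins \subset S$ (using that $A_\out$ is even and $v \notin A_\out$), so again $v$ contributes at least $2d-t$ edges of $\partial S$, each having $v$ as its endpoint inside $\Even \cap S^c$.

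The final step is to check these edge-sets are pairwise disjoint across all $v \in D(S)$. An edge of $\partial S$ has exactly one endpoint in $S$ (which is odd, as $S$ is odd, so this endpoint lies in $\Odd \cap S$) and exactly one endpoint in $S^c$ (which is even, lying in $\Even \cap S^c$). The edges attributed to $v \in D_\ins(S)$ all use $v$ as their $\Odd \cap S$-endpoint, and those attributed to $v' \in D_\out(S)$ all use $v'$ as their $\Even \cap S^c$-endpoint; so two edges attributed to distinct vertices of $D_\ins(S)$ differ in their $S$-endpoint, two attributed to distinct vertices of $D_\out(S)$ differ in their $S^c$-endpoint, and an edge attributed to some $v \in D_\ins(S)$ cannot coincide with one attributed to $v' \in D_\out(S)$ since the former has its $S^c$-endpoint in $A_\out$ while the latter has its $S^c$-endpoint equal to $v' \in A_*$, and $A_* \cap A_\out = \emptyset$. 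Hence $\sum_{v \in D(S)} |N(v) \setminus A_*| \le |\partial S|$, and since each summand is at least $2d-t$ we get $(2d-t)\,|D(S)| \le |\partial S|$, as required.

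I do not expect a serious obstacle here; the one point demanding care is the disjointness bookkeeping — making sure that an edge counted for a vertex in $D_\ins(S)$ is never recounted for a vertex in $D_\out(S)$ — which is handled by the observation that the two constructions pin down complementary endpoints of the edge and that $A_*$ is disjoint from both $A_\ins$ and $A_\out$.
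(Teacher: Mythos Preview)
Your proof is correct and is essentially the same argument as the paper's: the paper phrases it as $D_\ins \subset N_{2d-t}(A_\out)$ and $D_\out \subset N_{2d-t}(A_\ins)$, giving $|D_\ins| \le |\partial(D_\ins,A_\out)|/(2d-t)$ and $|D_\out| \le |\partial(D_\out,A_\ins)|/(2d-t)$, and then notes that $\partial(D_\ins,A_\out)$ and $\partial(D_\out,A_\ins)$ are disjoint subsets of $\partial S$. Your per-vertex edge-attribution and disjointness check unpack exactly this.
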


\begin{proof}
Let $S \in \cut(A)$ and denote $D_\ins := D_\ins(S)$, $D_\out := D_\out(S)$ and $D := D_\ins \cup D_\out$.
Since $A_\ins$ is odd, $A_\out$ is even and $A_*$ induces a subgraph of maximum degree at most $t$, we have
\[ \Odd \cap A_* \subset N_{2d-t}(A_\out) \quad\text{and}\quad \Even \cap A_* \subset N_{2d-t}(A_\ins) .\]
Thus,
\[ |D_\ins| \le \tfrac{|\partial(D_\ins, A_\out)|}{2d-t} \quad\text{and}\quad |D_\out| \le \tfrac{|\partial(D_\out, A_\ins)|}{2d-t} . \]
Since $\partial(D_\ins, A_\out)$ and $\partial(D_\out, A_\ins)$ are disjoint subsets of $\partial S$ (since $A_\ins \subset S$ and $A_\out \subset S^c$), we have
\[ |D| = |D_\ins \cup D_\out| \le \frac{|\partial S|}{2d-t} . \qedhere \]
\end{proof}

%
\section{Constructing approximations}
\label{sec:approx}
%

This section is dedicated to the proof of Proposition~\ref{lem:family-of-approx}.
That is, we show that there exists a small family $\cA$ of $t$-approximations which covers $\OC{n,r}$ in the sense that $\OC{n,r}\subset \cut(\cA)$.
The construction of $\cA$ is done in two steps, which we outline here.
For an approximation $A$, recall the notation
$A_*:=(A_\ins\cup A_\out)^c$ and say that $|A_*|$ is the \emph{size} of $A$.
The first step is to construct a small family of small approximations which covers $\cut_{n,r}$.

\begin{lemma}\label{lem:family-of-rough-approx}
For any integers $n \ge 1$ and $r \ge 1$, there exists a family $\cA$ of approximations, each of size at most $Cn \sqrt{(\log d)/d}$,
 such that $\OC{n,r} \subset \cut(\cA)$ and
	\[ |\cA| \le \exp\Big(\tfrac{Cn r \log^{3/2} d}{d^{3/2}}\Big) .\]
\end{lemma}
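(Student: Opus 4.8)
\textbf{Proof strategy for Lemma~\ref{lem:family-of-rough-approx}.}
The plan is to build, for each $S \in \OC{n,r}$, a ``rough'' approximation $A = A(S)$ in a canonical way, so that the total number of approximations produced is small, and then let $\cA$ be this collection. The guiding principle is the one already used in Sapozhenko's method and in~\cite{galvin2004phase,feldheimspinka}: the unknown region $A_*$ should be confined to a neighborhood of the boundary of $S$, and the boundary of $S$ (equivalently $\partial^\uparrow S$, which has size $n/2d$ by Lemma~\ref{lem:odd-set-boundary-size}) should be coverable by not-too-many lattice balls of a suitable radius. Concretely, I would fix a radius $\rho$ of order $\sqrt{(\log d)/d}$ times a constant (to be tuned), and cover the boundary by balls of radius $\asymp \rho d$; then $A_\ins$ is declared to be the odd vertices of $S$ far from these balls, $A_\out$ the even vertices of $S^c$ far from these balls, and $A_*$ everything in between. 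The number of such approximations is then at most the number of ways to choose the ``centers'' of the covering balls together with, for each center, a bounded amount of local data, which is where the two exponential factors in the bound originate.

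\textbf{Key steps, in order.}
First I would set up the relevant length scales: since $S$ is an odd $r$-cutset, its boundary is connected at scale $r$ in $\Z^d$, so $\intextB S$ can be traversed by a connected-at-scale-$O(r)$ set of size $|\intextB S| \le C|\partial^\uparrow S| = Cn/2d$ (using regularity and oddness to bound $|\intextB S|$ by a constant times $|\partial^\uparrow S|$). Second, I would invoke a standard covering/container estimate: a connected (at scale $r$) subset $B \subset \Z^d$ with $|B| = N$ can be ``$\rho d$-covered'' by a subset $T \subset B$ (the centers) with $|T| \le CN/(\rho d)^{d-1}$ — no, more carefully, the relevant statement is that one can pick a maximal $\rho d$-separated subset $T$; its size is at most $CN / (\text{volume of a }\rho d/2\text{-ball})$, but since we only need $B \subset \bigcup_{x\in T}$ (ball of radius $\rho d$ around $x$), and since the number of choices of such a $T$, given that it is connected at a controlled scale, is $\exp(|T| \cdot O(\log(\text{degree})))$. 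The honest way to bound the number of approximations is: the set of centers $T$ is connected at scale $O(\rho d + r)$ in $\Z^d$, so the number of possibilities for $T$ of size $|T| \le k$ is at most $k \cdot (\text{max degree at that scale})^{O(k)} = \exp(O(k \cdot d\log(\rho d + r)))$; optimizing $|T| \le C(n/2d)/(\rho d)^{d-1}$ against the per-center cost, with $\rho \asymp \sqrt{(\log d)/d}$, should land on the stated $\exp(Cnr\log^{3/2}d / d^{3/2})$. Third, I would verify the size bound $|A_*| \le Cn\sqrt{(\log d)/d}$: $A_*$ is contained in the union of balls of radius $O(\rho d)$ around $\intextB S$, hence $|A_*| \le |\intextB S| \cdot (\rho d)^{O(1)}$ — this is the step that forces $\rho$ to be only $\sqrt{(\log d)/d}$ rather than a constant, because a ball of radius $\rho d$ in $\Z^d$ has volume roughly $(e\rho)^d$, so we need $\rho$ bounded away from $1/e$ and in fact the factor $\sqrt{(\log d)/d}$ comes from wanting $|A_*|/|\intB S|$ to be polynomially large in $d$ while keeping the covering count subexponential. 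Fourth, I would check that $A(S)$ genuinely approximates $S$ (i.e.\ $A_\ins \subset S$, $A_\out \subset S^c$, the two are disjoint and respect parity) — immediate from the definitions — and that the map $S \mapsto A(S)$ has image of the claimed size.

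\textbf{Main obstacle.}
The technical heart, and the step I expect to be most delicate, is the simultaneous optimization of the covering radius: choosing $\rho$ too small makes $|A_*|$ acceptably small but makes the number $|T|$ of ball-centers — and hence $|\cA|$ — too large, while choosing $\rho$ too large inflates $|A_*|$. Getting the exponent exactly $3/2$ (rather than, say, $2$) on $\log d$ requires a careful accounting of the per-center entropy cost, which is really $O(d \log(\text{branching factor}))$ where the branching factor is the number of lattice points within distance $O(\rho d)$ of a given point — this is $\exp(O(d\log(1/\rho)))$ — combined with a factor $r$ from the $r$-connectivity of the cutset. I would also need the combinatorial fact (standard, e.g.\ via a spanning-tree argument) that the number of connected-at-scale-$s$ subsets of $\Z^d$ of size $k$ containing a fixed vertex is at most $(\Delta_s)^{Ck}$ where $\Delta_s = |B_s(0)|$ is the size of the radius-$s$ ball; this is where the explicit dependence on $r$ and on $\rho d$ enters, and balancing it against $|T| \le Cn/(2d(\rho d)^{d-1})$ with $\rho \asymp \sqrt{(\log d)/d}$ is the computation that yields the final bound. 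The remaining book-keeping — that $A$ really is an \emph{approximation} (not yet a $t$-approximation; that refinement is the job of the second step of the construction, not of this lemma) and that distinct $S$ in the same class are still all covered — is routine.
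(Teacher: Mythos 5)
Your proposal shares the top-level Sapozhenko philosophy with the paper (compress the boundary to a small ``skeleton'' set, count skeletons via the connectivity of $\intextB S$ at scale $O(r)$, pay a small per-skeleton cost to reconstruct the approximation), but the mechanism you use to produce the skeleton --- covering $\intextB S$ by metric balls of radius $\rho d \asymp \sqrt{d\log d}$ and taking a $\rho d$-separated set of centers --- does not work, and the paper does something genuinely different.

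There are two concrete gaps. First, the bound $|T| \le C(n/2d)/(\rho d)^{d-1}$ (and the variant with ball volume in the denominator) is simply false for a general subset $B$ of $\Z^d$: the disjoint $\rho d/2$-balls around points of a $\rho d$-separated $T\subset B$ are not contained in $B$, so you cannot divide $|B|$ by the ball volume to bound $|T|$. In the worst case the boundary is a union of well-separated clusters, each of which needs its own center, giving $|T| \asymp |\intextB S|$ --- no compression at all. Worse, $A_*$ is then a union of $|T|$ balls of radius $\sqrt{d\log d}$, each of which contains $\exp\big(\Theta(\sqrt{d\log d}\,\log d)\big)$ lattice points, so $|A_*|$ would be vastly larger than the target $Cn\sqrt{(\log d)/d}$ (note the target is \emph{smaller} than $n$, i.e. than the boundary itself --- your construction inflates rather than compresses). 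Second, your definitions of $A_\ins$ and $A_\out$ (``odd vertices of $S$ far from the balls'', etc.) depend on $S$; you gesture at ``a bounded amount of local data'' per center but never say how one recovers $A_\ins,A_\out$ from $T$ alone, which is precisely the step where the paper has to work.

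The paper circumvents both problems by exploiting the oddness of $S$. The starting point is a four-cycle lemma (Lemma~\ref{lem:four-cycle-property}): for an odd set, every boundary edge propagates around each of the $d-1$ adjacent $4$-cycles, so a boundary vertex tends to see many boundary edges. This is used to show that the set $S^{\rev}$ of ``revealed'' vertices (those with $\ge d$ incident boundary edges) already separates $S$, i.e. every boundary edge has an endpoint in $S^{\rev}$ (Corollary~\ref{cor:revealed-separate}). Then, using a fractional-cover lemma of Lov\'asz (Lemma~\ref{lem:existence-of-covering2}) together with a double-counting estimate (Lemma~\ref{lem:sizes}), one extracts a set $U\subset(\intextB S)^+$ of size only $Cn d^{-3/2}\sqrt{\log d}$ --- a factor $\sqrt{(\log d)/d}$ \emph{smaller} than $|\partial^\uparrow S|=n/2d$ --- with $N(U)\supset S^{\rev}$, hence $W:=N(U)$ separates $S$ (Lemma~\ref{lem:existence-of-U}). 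The reconstruction step (Lemma~\ref{lem:family-of-small-approx}) is where $A_\ins,A_\out$ come from: once $W$ is fixed, each connected component of $\Z^d\setminus W$ lies entirely in $S$ or in $S^c$; the small components (of size $\le d$) go into $A_*$, and there are at most $2|W|/d$ large components (by isoperimetry), so only $4^{|W|/d}$ binary choices are needed. This gives $|A_*|\le 3|W| \le Cn\sqrt{(\log d)/d}$ and $|\cA_U|\le 4^{|W|/d}$, after which the count of connected (at scale $r+4$) sets $U$ of that size supplies the $\exp(Cnr\log^{3/2}d/d^{3/2})$ factor. None of this is recoverable from a metric-ball covering; the $\sqrt{(\log d)/d}$ compression is a structural consequence of oddness, not of the geometry of balls in $\Z^d$.
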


The second step is to upgrade an approximation to a small family of $t$-approximations which covers at least the same collection of regular odd sets.

\begin{lemma}\label{lem:family-of-full-approx}
For any integers $n,m \ge 1$ and $1 \le t < 2d$ and any approximation $A$ of size $m$, there exists a family $\cA$ of $t$-approximations such that $\OC{n}(A) \subset \cut(\cA)$ and
	\[ |\cA| \le \exp\Big(\tfrac{C\log d}{d} \cdot \big(m + \tfrac{n}{t}\big)\Big) .\]
\end{lemma}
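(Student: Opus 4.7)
The plan is to construct, for each $S \in \OC{n}(A)$, a canonical $t$-approximation $A'(S)$ extending $A$ (so that $A_\ins \subseteq A'(S)_\ins$ and $A_\out \subseteq A'(S)_\out$) which approximates $S$; the family $\cA$ will then be taken as the image of $S \mapsto A'(S)$. The construction proceeds by iterative refinement: starting from $A^{(0)} = A$, at each iteration find a \emph{defect} vertex $v \in A_*^{(i)}$ (one with more than $t$ neighbors in the subgraph of $\Z^d$ induced on $A_*^{(i)}$, or one that is isolated there), reveal whether $v \in S$, and move $v$ (together with any forced neighborhood additions needed to preserve the oddness of $A_\ins^{(i+1)}$ and the evenness of $A_\out^{(i+1)}$) to the appropriate side. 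Stopping when no defect remains yields a $t$-approximation, which we set as $A'(S)$.

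To count $|\cA|$, I would encode each $A'(S)$ by the pair $(R, \sigma)$, where $R := A_* \setminus A'(S)_*$ is the set of revealed vertices and $\sigma \colon R \to \{\ins, \out\}$ records their assignment. With the a priori estimate $|R(S)| \le K := C(m + n/t)/d$ (established below), this gives
\[
|\cA| \le \sum_{k \le K} \binom{m}{k} 2^k \le (K+1)\binom{m}{K} 2^K \le \exp\Big(\tfrac{C'\log d}{d}(m + n/t)\Big),
\]
using $\log \binom{m}{K} \le K \log(em/K) = O(K\log d)$, which is valid since $em/K \le ed/C = O(d)$.

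The crux is the estimate $|R(S)| \le O((m+n/t)/d)$, which I expect to split into two contributions. The $O(m/d)$ term comes from bits spent on revelations that trigger a forced neighborhood addition of $\Omega(d)$ vertices of $A_*$ at once (e.g., revealing an even vertex in $S$ forces the whole ball $v^+$ into $A_\ins$, and similarly for revealing an odd vertex in $S^c$); these can be charged to essentially disjoint balls in $A_*$, yielding at most $|A_*|/d$ such revelations. The $O(n/(td))$ term comes from revelations at high-degree defects where no forced neighborhood is triggered, which are charged against boundary edges of $S$: at such a defect $v$ with more than $t$ unknown neighbors, a careful analysis exploiting the oddness and regularity of $S$ should show that $\Omega(t)$ boundary edges of $S$ appear in $v$'s immediate vicinity, and each boundary edge can be charged to at most $O(d)$ defects.

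\textbf{Main obstacle.} The delicate step is the charging argument underlying the $O(n/(td))$ bound on high-degree revelations. It requires pinning down the interaction between defects of $A^{(i)}$ and the boundary of $S$, using the bipartite structure of $\Z^d$ together with the regular-odd structure of $S$ to guarantee the $\Omega(t)$ boundary edges per defect. Equally demanding is the bookkeeping for cascades: revealing a vertex can produce new isolated or newly high-degree vertices in its vicinity, leading to iterative defects that must be amortized against either the $m/d$ budget (structural cascades arising from forced neighborhood additions) or the $n/(td)$ budget (boundary-driven cascades). Ensuring no double-counting, and keeping the deterministic rule for picking the next defect compatible with these charging schemes, is the main combinatorial core of the proof.
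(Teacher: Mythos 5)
There is a genuine gap, and it sits exactly where you place your ``main obstacle'': an adaptive, one-vertex-at-a-time revelation scheme cannot reach the bound $\exp\big(\tfrac{C\log d}{d}(m+\tfrac{n}{t})\big)$. The fatal case is the ``negative'' revelations, i.e.\ those at a defect $v$ where the answer forces no neighbourhood (an even $v$ with $v\notin S$, or an odd $v$ with $v\in S$). Such a revelation removes only $v$ itself from the unknown set, costs a full bit plus the cost of naming $v$, and the only boundary edges it can be charged to are the at most $2d$ edges incident to $v$. These vertices are precisely the elements of $D(S)=(\Odd\cap A_*\cap S)\cup(\Even\cap A_*\cap S^c)$, and their number is a priori bounded only by $m$, or by $n/(2d-\Delta)$ once the maximum degree $\Delta$ of $A_*$ has been reduced below $2d$; so even in the best case you get $O(n/d)$ such revelations and a family of size $\exp(Cn\log d/d)$, missing the essential factor $t$. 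Your own arithmetic confirms this: ``$\Omega(t)$ boundary edges per defect, each edge charged to $O(d)$ defects'' gives $O(nd/t)$ defects, not $O(n/(td))$. Two further accounting problems: a positive revelation at a defect of degree slightly above $t$ forces a ball containing only $>t$ (not $\Omega(d)$) unknown vertices, so that budget is $m/t$ rather than $m/d$; and isolated vertices of $A_*$ should never be revealed, since they are determined for free (an isolated even $v\in A_*$ has all its neighbours in $A_\ins$, because an odd neighbour in $A_\out$ would lie in the internal boundary of the even set $A_\out$, and regularity of $S^c$ then forces $v\in S$) --- this is Lemma~\ref{lem:eliminate isolated}.

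The paper's proof (Lemma~\ref{lem:approx-algorithm}) never reveals elements of $D(S)$ individually. Instead one guesses in a single shot an independent witness set $W\subset A_*$, chosen \emph{maximal} subject to $W\cap\Even\subset S$, $W\cap\Odd\subset S^c$ and $|A_*\cap N(W)|\ge t|W|$. Since $A_*\cap N(W)\subset D(S)$, each guessed vertex is paid for by $t$ distinct elements of $D(S)$, whence $|W|\le |D(S)|/t$; and the maximality of $W$ forces every remaining unknown vertex with $\ge t$ neighbours in $A_*\setminus W^+$ to be determined \emph{without being revealed} (adding it to $W$ would otherwise contradict maximality). Because the initial approximation only yields $|D(S)|\le|A_*|=m$, the construction is bootstrapped in two stages: first with threshold $d$, at cost $\exp(Cm\log d/d)$, producing a $d$-approximation for which Lemma~\ref{lem:tightness} gives $|D(S)|\le n/d$; then with threshold $t$, at cost $\exp(Cn\log d/(dt))$. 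The maximal-witness-set mechanism and this two-stage improvement of the bound on $|D(S)|$ are the two ideas your proposal is missing, and without them the approach as written does not close.
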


Lemma~\ref{lem:family-of-rough-approx} and Lemma~\ref{lem:family-of-full-approx} are proved in Sections~\ref{sec:small-approx}, and~\ref{sec:full-approx} below.

\begin{proof}[Proof of Proposition~\ref{lem:family-of-approx}]
	Applying Lemma~\ref{lem:family-of-rough-approx}, we obtain a family $\cB$ of approximations, each of size at most $m := Cn \sqrt{(\log d)/d}$, such that $\cut_{n,r} \subset \cut(\cB)$ and $|\cB| \le \exp( Cnr d^{-3/2} \log^{3/2} d)$.
	Applying Lemma~\ref{lem:family-of-full-approx} to each approximation in $\cB$, we obtain a collection of families of $t$-approximations.
	Taking the union over this collection, we obtain a family $\cA$ of $t$-approximations such that $\OC{n,r}~\subset~\cut(\cA)$ and
	$|\cA| \le |\cB| \cdot \exp(Cnd^{-1} \log d \cdot (\sqrt{(\log d)/d} + 1/t))$.
	The required bound follows.
\end{proof}

\subsection{Preliminaries}

In this section, we gather some elementary combinatorial facts about graphs which we require for the construction of approximations.
For the purpose of these preliminaries, we fix an arbitrary graph $G=(V,E)$ of maximum degree $\Delta$.

\begin{lemma}\label{lem:sizes}
	Let $U \subset V$ be finite and let $t>0$. Then
	\[ |N_t(U)| \le \frac{\Delta}{t} \cdot |U| .\]
\end{lemma}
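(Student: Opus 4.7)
The plan is a straightforward double-counting argument on edges between $U$ and $N_t(U)$. Let me denote by $E(U, N_t(U))$ the set of edges of $G$ with one endpoint in $U$ and the other in $N_t(U)$ (noting that these two sets may overlap, but edges are still well defined via their endpoints; if one prefers, count directed edges from $N_t(U)$ to $U$).

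First I would bound this count from below using the definition of $N_t(U)$: every vertex $v \in N_t(U)$ has, by definition, at least $t$ neighbors inside $U$, contributing at least $t$ edges incident to $v$ whose other endpoint lies in $U$. Summing over $v \in N_t(U)$ yields at least $t \cdot |N_t(U)|$ such edges.

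Next I would bound the same quantity from above using the maximum degree hypothesis: every vertex $u \in U$ has at most $\Delta$ neighbors in $V$, hence at most $\Delta$ neighbors in $N_t(U)$, giving at most $\Delta \cdot |U|$ edges between $U$ and $N_t(U)$. Combining the two bounds gives $t \cdot |N_t(U)| \le \Delta \cdot |U|$, and dividing by $t$ yields the claim.

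There is essentially no obstacle here; the only minor care is to make sure the counting is correctly set up when $U \cap N_t(U) \ne \emptyset$, which is handled by counting ordered pairs $(v,u) \in N_t(U) \times U$ with $v \sim u$ rather than unordered edges.
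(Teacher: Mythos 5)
Your proof is correct and is essentially identical to the paper's: both count the adjacent pairs $(v,u) \in N_t(U) \times U$, bounding the count below by $t\,|N_t(U)|$ via the definition of $N_t$ and above by $\Delta\,|U|$ via the degree bound. Your remark about handling the possible overlap $U \cap N_t(U) \neq \emptyset$ by counting ordered pairs is exactly what the paper's double sum does implicitly.
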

\begin{proof}
	This follows from a simple double counting argument.
	\[ t |N_t(U)|
		\le \sum_{v \in N_t(U)} |N(v) \cap U|
		= \sum_{u \in U} \sum_{v \in N_t(U)} \1_{N(u)}(v)
		= \sum_{u \in U} |N(u) \cap N_t(U)|
		\le \Delta |U| . \qedhere \]
\end{proof}

The next lemma follows from a classical result of Lov{\'a}sz~\cite[Corollary~2]{lovasz1975ratio} about fractional vertex covers,
applied to a weight function assigning a weight of $\frac1t$ to each vertex of $S$.

\begin{lemma}\label{lem:existence-of-covering2}
Let $S \subset V$ be finite and let $t \ge 1$. Then there exists a set $T \subset S$ of size $|T|~\hspace{-4pt}\le~\hspace{-4pt}\frac{1+\log \Delta}{t} |S|$ such that $N_t(S) \subset N(T)$.
\end{lemma}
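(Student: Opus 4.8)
Lemma~\ref{lem:existence-of-covering2} is a direct corollary of Lov\'asz's theorem on the integrality gap of set cover / fractional vertex cover, so the work consists in phrasing the combinatorial statement as an instance of that theorem. Consider the hypergraph $H$ whose vertex set is $S$ and whose hyperedges are the sets $N(v) \cap S$ for $v \in N_t(S)$; since every $v \in N_t(S)$ has $|N(v) \cap S| \ge t$, each hyperedge of $H$ has size at least $t$. A set $T \subset S$ with $N_t(S) \subset N(T)$ is precisely a set that meets every hyperedge of $H$, i.e., a transversal (vertex cover) of $H$. So the task is to bound the minimum transversal of a hypergraph in which every edge has size $\ge t$.

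First I would exhibit a cheap fractional transversal: assign weight $1/t$ to every vertex of $S$. Since each hyperedge has $\ge t$ vertices of $S$, the total weight on each hyperedge is $\ge 1$, so this is a feasible fractional transversal of total weight $|S|/t$. Lov\'asz's theorem (\cite[Corollary~2]{lovasz1975ratio}) states that the minimum integral transversal is at most $(1 + \log \Delta^\ast)$ times the minimum fractional transversal, where $\Delta^\ast$ is the maximum degree of the hypergraph, i.e., the maximum over $s \in S$ of the number of hyperedges containing $s$. Each hyperedge $N(v)\cap S$ containing a fixed $s \in S$ forces $v \in N(s)$, and there are at most $\Delta$ such $v$, so $\Delta^\ast \le \Delta$. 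Hence the minimum integral transversal $T$ satisfies $|T| \le (1 + \log\Delta)\cdot |S|/t$, and by construction $N_t(S) \subset N(T)$ and $T \subset S$. That is the claim.

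The only subtlety to watch is the exact form of Lov\'asz's result: his corollary is usually stated for covering the vertex set of a hypergraph by edges (set cover), and one must invoke the dual form (transversal versus fractional transversal), or equivalently apply the set-cover statement to the dual hypergraph; either way the integrality gap bound $1 + \log(\text{max edge size or max degree})$ is the same, and here it is the degree bound $\Delta$ that we need. One should also note the hypothesis $t \ge 1$ is used so that the weight function $1/t$ is at most $1$ on each vertex (not strictly needed for feasibility, but it keeps the fractional cover a genuine relaxation); and if $N_t(S) = \emptyset$ the statement is trivial with $T = \emptyset$, so we may assume $H$ has at least one edge. I do not expect any real obstacle — the content is entirely in correctly citing Lov\'asz and checking the two trivial inequalities (edge size $\ge t$, degree $\le \Delta$).
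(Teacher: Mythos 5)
Your proof is correct and takes essentially the same route as the paper, which disposes of the lemma in one sentence by citing Lov\'asz~\cite[Corollary~2]{lovasz1975ratio} applied to the fractional cover assigning weight $1/t$ to each vertex of $S$. You have simply made explicit the hypergraph formulation (vertices $S$, edges $N(v)\cap S$ for $v\in N_t(S)$), the feasibility of the weight-$1/t$ fractional transversal, and the degree bound $\Delta^*\le\Delta$ that the paper leaves implicit.
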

%

The following standard lemma gives a bound on the number of connected subsets of a graph.
\begin{lemma}[{\cite[Chapter~45]{Bol06}}]\label{lem:number-of-connected-graphs}
The number of connected subsets of $V$ of size $k+1$ which contain the origin is at most $(e(\Delta-1))^k$.
\end{lemma}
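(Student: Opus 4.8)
\textbf{Proof proposal for Lemma~\ref{lem:number-of-connected-graphs}.}

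The plan is to set up a standard \emph{exploration process} (a depth-first or breadth-first search) on connected vertex sets, and to bound the number of outcomes by counting the sequences of choices the process makes. Fix the origin $o \in V$. Given a connected set $W$ of size $k+1$ containing $o$, I would run a canonical spanning-tree exploration rooted at $o$: maintain an ordered stack of ``active'' vertices together with a set of already-visited vertices, and at each step pop the current vertex, scan its $\le \Delta$ neighbors in a fixed reference order, and push onto the stack those neighbors that are both in $W$ and not yet visited. This visits all $k+1$ vertices of $W$ and traverses a spanning tree of $W$; recording, for each newly discovered vertex, \emph{which neighbor slot} of the currently processed vertex it occupies, produces a code word in an alphabet of size at most $\Delta$.

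The key steps, in order, are: (i) formalize the exploration so that it is deterministic once $W$ (and the fixed neighbor order) is given, hence the map $W \mapsto (\text{code word})$ is well defined; (ii) observe the map is \emph{injective} — the code word plus the reference ordering lets one replay the exploration and recover $W$; (iii) count code words. For step (iii): the exploration discovers exactly $k$ vertices beyond $o$, so the code has length $k$, and naively this gives $\Delta^k$. To get the sharper $(e(\Delta-1))^k$ one refines the count: in a DFS, when we are processing a vertex $v$ other than the root we already know one of its neighbors (its parent in the tree), so there are at most $\Delta-1$ slots that can point to a genuinely new vertex; also not every slot needs to be ``spent'' on a new vertex, and the total number of new-vertex discoveries is exactly $k$ while the tree has $k$ edges — a more careful bookkeeping (e.g. encoding the DFS walk as a sequence over the $k$ tree edges, each edge traversed twice, with at most $\Delta-1$ choices at each forward step and a Catalan-type factor for the tree shape bounded by $4^k$, or more directly via the standard generating-function estimate) yields a bound of the form $(e(\Delta-1))^k$. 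Since this is a textbook fact, I would simply cite \cite[Chapter~45]{Bol06} rather than reproduce the optimization; the paragraph above records the mechanism in case a self-contained argument is wanted.

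The only mildly delicate point — the ``main obstacle'' — is getting the constant $e(\Delta-1)$ rather than a cruder $C^k \Delta^k$: this requires either the generating-function argument that the number of rooted plane trees with $k$ edges is the Catalan number $\binom{2k}{k}/(k+1) \le 4^k$ combined with at most $(\Delta-1)^k$ labelings of the non-root vertices and then absorbing $4 \cdot (\Delta-1) \le e(\Delta-1)$-type slack (which is false for small $\Delta$, so one really must use the tighter simultaneous count), or invoking the known sharp bound directly. Given that the lemma is quoted verbatim from \cite{Bol06}, the proof in the paper will consist of a one-line citation, and the exploration-process description above is the sketch one would expand if a proof were demanded.
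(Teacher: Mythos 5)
The paper gives no proof of this lemma --- it is quoted verbatim with a reference to Bollob\'as, which is exactly what you propose in your closing paragraph. Your DFS-encoding sketch captures the right mechanism, and you correctly flag that the naive Catalan-shape-times-labelling split ($4^k(\Delta-1)^k$) overshoots the claimed $(e(\Delta-1))^k$ so that the sharper bound requires the simultaneous Fuss--Catalan / generating-function count; as support for a citation, this is the same approach the paper takes.
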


Recall that $G^{\otimes r}$ is the graph on $V$ in which two vertices are adjacent if their distance in $G$ is at most $r$.
The next simple lemma was first introduced by Sapozhenko~\cite{sapozhenko1987onthen}.

\begin{lemma}[{\cite[Lemma 2.1]{sapozhenko1987onthen}}]\label{lem:r-connected-sets}
Let $S,T \subset V$ and let $a,b$ be positive integers.
Assume that $S$ is connected in $G^{\otimes a}$, $\dist(s,T) \le b$ for all $s \in S$ and $\dist(S,t) \le b$ for all $t \in T$. Then $T$ is connected in $G^{\otimes (a+2b)}$.
\end{lemma}

The following lemma, based on ideas of Tim{\'a}r~\cite{timar2013boundary}, establishes the connectivity of the boundary of subsets of $\Z^d$ which are both connected and co-connected.

\begin{lemma}[{\cite[Proposition~3.1]{feldheim2013rigidity}}] \label{lem:int+ext-boundary-is-connected}
Let $U \subset \Z^d$ be connected and co-connected. Then $\intextB U$ is connected.
\end{lemma}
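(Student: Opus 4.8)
The plan is to follow the approach of Tim\'ar, reducing the connectivity of $\intextB U$ in $\Z^d$ to a statement about the auxiliary graph in which one connects two vertices of $\Z^d$ whenever they lie together in a common unit square (a ``$*$-type'' adjacency adapted to the boundary). First I would set up the following skeleton. Fix two vertices $x, y \in \intextB U$; I want to exhibit a path between them inside $\intextB U$. Pick a shortest path $P$ in $\Z^d$ from some vertex of $U$ to some vertex of $U^c$ realizing each of $x,y$ on the interface; more usefully, take $x', y' \in U$ adjacent respectively to $x, y$ (if $x,y \in \extB U$; the other cases are symmetric or easier), and a path $\gamma$ in $U$ from $x'$ to $y'$ (possible since $U$ is connected), together with a path $\delta$ in $U^c$ from $x$ to $y$ (possible since $U^c$ is connected, and $\extB U \subset U^c$). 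The idea is to ``slide'' $\delta$ toward the interface using $\gamma$ as a guide, or dually to walk along the pairs of adjacent edges straddling $\partial U$.

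The cleaner route, which is the one I would actually write, is Tim\'ar's: consider the edge-boundary $\partial U$ as a set of edges, take the two edges $e_x, e_y \in \partial U$ incident to $x$ and $y$ respectively, and build a path in $\partial U$ from $e_x$ to $e_y$ using the ``square'' connectivity — two edges of $\partial U$ are declared adjacent if they are opposite edges of a common $2$-dimensional unit face of $\Z^d$, or share an endpoint. The key combinatorial fact is that $\partial U$, viewed with this adjacency, is connected whenever $U$ is connected and co-connected; this is exactly \cite[Proposition~3.1]{feldheim2013rigidity}'s content. Once one has such a path of edges $e_x = f_0, f_1, \dots, f_k = e_y$ in $\partial U$, each $f_i = \{a_i, b_i\}$ with $a_i \in U$, $b_i \in U^c$, one extracts from the sequence $(b_0, b_1, \dots, b_k)$ a walk in $\extB U$: consecutive edges $f_i, f_{i+1}$ either share the endpoint in $U^c$ (so $b_i = b_{i+1}$) or are opposite in a unit square, in which case $b_i \sim b_{i+1}$ in $\Z^d$ and both lie in $\extB U$. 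Symmetrically $(a_0,\dots,a_k)$ gives a walk in $\intB U$, and $a_i \sim b_i$ connects the two at the endpoints, yielding a connected path in $\intextB U = \intB U \cup \extB U$ joining $x$ to $y$.

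The main obstacle — and the place where the real argument lives — is proving that $\partial U$ is connected under the square/shared-endpoint adjacency; this is precisely why the lemma is quoted from \cite{feldheim2013rigidity} rather than reproved here. That step requires a genuinely topological input: one shows that if $\partial U$ were disconnected into two parts $F_1, F_2$, then one could find a cycle in the dual sense separating them, contradicting either the connectivity of $U$ or that of $U^c$; Tim\'ar's insight is to run this argument on the ``$\ast$-connectivity'' of the complement to make the planar-duality intuition work in all dimensions. Since the statement is cited verbatim as \cite[Proposition~3.1]{feldheim2013rigidity}, in the paper I would simply invoke it; the contribution of the present lemma is only the (routine) translation from edge-boundary connectivity to vertex-boundary connectivity via the two walks $(a_i)$ and $(b_i)$ described above, so I would keep that part terse.
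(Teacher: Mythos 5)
The paper offers no proof of this lemma --- it is stated with only the citation to \cite[Proposition~3.1]{feldheim2013rigidity} --- and your proposal ends in the same place, so the two agree. One small inaccuracy in your supporting sketch: two boundary edges adjacent in the ``share-an-endpoint-or-opposite-in-a-unit-square'' sense may share the endpoint lying in $U$, in which case $(b_i)$ is not a walk in $\extB U$ (and symmetrically $(a_i)$ may jump); what does hold is that for consecutive adjacent boundary edges $f_i,f_{i+1}$ the four endpoints $\{a_i,b_i,a_{i+1},b_{i+1}\}$ always form a connected subset of $\intextB U$, which is all that is needed. Moreover, if Proposition~3.1 of \cite{feldheim2013rigidity} is stated directly for $\intextB U$, as the lemma's wording suggests, then the translation step is unnecessary.
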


\begin{cor}\label{lem:int+ext-boundary-is-r-connected}
	Let $r \ge 1$ be an integer and let $U \subset \Z^d$ be such that $U$ and $U^c$ are connected in $(\Z^d)^{\otimes r}$. Then $\intextB U$ is connected in $(\Z^d)^{\otimes r}$.
\end{cor}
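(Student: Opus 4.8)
\textbf{Proof plan for Corollary~\ref{lem:int+ext-boundary-is-r-connected}.}

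The plan is to deduce the corollary from Lemma~\ref{lem:int+ext-boundary-is-connected} by passing to an auxiliary subset of $\Z^d$ whose ordinary connectivity mirrors the $\otimes r$-connectivity of $U$. The natural candidate is the $r$-thickening $W := U^{+(r-1)}$, or more symmetrically the set of vertices within distance $r-1$ of $U$; one should check that $W$ is connected (since $U$ is connected in $(\Z^d)^{\otimes r}$, any two vertices of $U$ are joined by a sequence of jumps of length $\le r$, each of which can be filled in by a path through $W$) and that $W^c$ is connected as well. The latter point is the delicate one: $W^c$ is obtained from $U^c$ by removing a ``collar'' of width $r-1$, and one must verify that this does not disconnect it. Here one uses that $U^c$ is connected in $(\Z^d)^{\otimes r}$ together with the fact that a vertex survives in $W^c$ precisely when it is at distance $\ge r$ from $U$; a sequence of $\le r$-jumps in $U^c$ staying far from $U$ can again be filled in by an ordinary path, and the relevant distance bound propagates along that path because a single step changes the distance to $U$ by at most $1$.

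Once $W$ and $W^c$ are both connected, Lemma~\ref{lem:int+ext-boundary-is-connected} gives that $\intextB W$ is connected in the ordinary sense, hence certainly connected in $(\Z^d)^{\otimes r}$. It then remains to compare $\intextB W$ with $\intextB U$: one shows that every vertex of $\intextB U$ is within distance $r$ (in $\Z^d$) of $\intextB W$ and vice versa, so that by Lemma~\ref{lem:r-connected-sets} (with $G = \Z^d$, applied to $S = \intextB W$, $T = \intextB U$, and suitable $a$ and $b$ with $a + 2b \le r$) the set $\intextB U$ inherits connectivity in $(\Z^d)^{\otimes r}$. Alternatively, and perhaps more cleanly, one can argue directly: given two vertices $x, y \in \intextB U$, find nearby vertices $x', y' \in \intextB W$, connect them by a nearest-neighbor path $P$ in $\intextB W$, and replace $P$ by a sequence of vertices of $\intextB U$ at pairwise distance $\le r$, using that each vertex of $\intextB W$ lies within distance $r/2$ (roughly) of $\intextB U$. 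The bookkeeping of which width of collar to use, and whether one wants $r-1$ or $\lceil r/2 \rceil$ and the like, is the only real subtlety; the geometric content is entirely contained in Lemma~\ref{lem:int+ext-boundary-is-connected} and Lemma~\ref{lem:r-connected-sets}.

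The main obstacle will be handling the connectivity of the complement $W^c$ after thickening $U$ — this is exactly the place where the hypothesis on $U^c$ being connected in $(\Z^d)^{\otimes r}$ must be used, and it requires care to fill in long jumps by paths that stay at the required distance from $U$. Everything else (connectivity of $W$, the distance comparisons between $\intextB W$ and $\intextB U$, and the final appeal to Lemma~\ref{lem:r-connected-sets}) is routine. A clean way to organize the distance comparisons is to observe that $\intextB W \subset \{v : r \le \dist(v, U) \le r+1\}$ up to the inner part, and dually each vertex of $\intextB U$ sits at a controlled distance from this shell, so the two boundaries are mutually $r$-dense.
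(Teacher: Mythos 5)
There is a genuine gap in the proposal: the complement of the thickened set $W := U^{+(r-1)}$ need not be connected, and the $r$-connectedness of $U^c$ does not prevent this. The problem is that thickening $U$ can plug narrow ``gaps'' through which $U^c$ is connected, thereby sealing off pockets of $U^c$. For a concrete counterexample in $\Z^2$ with $r=3$, take $U$ to be the boundary of the square $[0,6]^2$ with the single vertex $(3,6)$ removed; both $U$ and $U^c$ are (ordinarily, hence $r$-) connected, the interior communicating with the exterior only through the gap at $(3,6)$. The $2$-thickening $W=U^{+2}$ plugs this gap, and $W^c$ then splits into the tiny interior piece $\{(3,3),(3,4)\}$ and the unbounded exterior. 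So Lemma~\ref{lem:int+ext-boundary-is-connected} cannot be applied to $W$, and the rest of the plan (comparing $\intextB W$ with $\intextB U$, then invoking Lemma~\ref{lem:r-connected-sets}) never gets off the ground. The heuristic that ``a single step changes the distance to $U$ by at most one'' does not help here, because the chain of $\le r$-jumps in $U^c$ witnessing $r$-connectivity may be forced to pass close to $U$ exactly where the thickening has sealed things off; the same obstruction also undermines the claimed mutual $r$-density of $\intextB U$ and $\intextB W$, since a vertex of $\extB U$ deep inside a small sealed pocket may be far from $W^c$.

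The paper avoids thickening altogether. It decomposes $U$ into its ordinary connected components $B$, and notes that $\intextB U$ is the union of the sets $\intextB B$, which chain together because $U$ is $r$-connected. For each fixed $B$, it then looks at the ordinary connected components $W$ of $B^c$; each such $W$ is connected and co-connected, so Lemma~\ref{lem:int+ext-boundary-is-connected} applies directly to each $W$ to give that $\intextB W$ is connected. One checks that $\intextB B$ is the union of the $\intextB W$ over $W$, that every such $W$ meets $U^c$, and that the $r$-connectedness of $U^c$ chains these pieces together (if $\dist(W,W')\le r$, a geodesic between them exits through $\intextB W$ and enters through $\intextB W'$). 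This component-by-component use of Lemma~\ref{lem:int+ext-boundary-is-connected}, with the $\otimes r$ hypotheses used only to chain ordinary components, is the key idea your proposal is missing.
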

\begin{proof}
	Since $U$ is connected in $(\Z^d)^{\otimes r}$, it suffices to show that $\intextB B$ is connected in $(\Z^d)^{\otimes r}$ for every connected component $B$ of $U$.
	Let $\cC$ be the collection of connected components of $B^c$.
	Since $U^c$ is connected in $(\Z^d)^{\otimes r}$, it suffices to show that $\intextB W \cup \intextB W'$ is connected in $(\Z^d)^{\otimes r}$ whenever $W,W' \in \cC$ satisfy $\dist(W,W') \le r$. This follows from Lemma~\ref{lem:int+ext-boundary-is-connected}.
\end{proof}

\subsection{Constructing small approximations}
\label{sec:small-approx}

This section is devoted to the proof of Lemma~\ref{lem:family-of-rough-approx}.
That is, we construct a small family of approximations, each of size at most $Cn \sqrt{(\log d)/d}$, such that $\OC{n,r} \subset \cut(\cA)$. This is done in two steps. First, we show that for every regular odd set $S$, there exists a small set $U$ such that $N(U)$ separates $S$, where we say that a set $W$ \emph{separates} $S$ if every edge in $\partial S$ has an endpoint in $W$.

\begin{lemma}\label{lem:existence-of-U}
	Let $n \ge 1$ be an integer and let $S \in \cut_n$. Then there exists $U \subset (\intextB S)^+$
of size at most $Cn d^{-3/2}\sqrt{\log d}$ such that $N(U)$ separates $S$.
\end{lemma}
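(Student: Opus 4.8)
The plan is to find a small set $U$ contained in $(\intextB S)^+$ whose neighborhood $N(U)$ contains, for every edge of $\partial S$, at least one endpoint. The natural first attempt is to take $U = \intextB S$ itself, since every edge of $\partial S$ has an endpoint in $\intB S \subset N(\extB S)$ and an endpoint in $\extB S \subset N(\intB S)$; hence $N(\intextB S) \supset \intextB S$ already separates $S$. However $|\intextB S|$ can be of order $|\partial S|/2 = n/2$, far too large. The key observation that buys us the factor $d^{-3/2}\sqrt{\log d}$ is that for an odd set, the internal boundary $\intB S$ consists of odd vertices, each of which (being in a regular set) has many neighbors in $S$ and in particular many neighbors in $\extB S$ — so $\intB S$ is ``covered with high multiplicity'' by $\extB S$, and symmetrically. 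This is precisely the regime where Lemma~\ref{lem:existence-of-covering2} (the Lov\'asz fractional cover bound) applies.

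Concretely, I would proceed as follows. Partition the separating task into the odd side and the even side. For the odd side: every vertex $u \in \intB S$ is odd, lies in $S$, and since $S$ is regular, $u$ is not isolated; more is true — because $S$ is odd, if $u \in \intB S$ then $u$ has a neighbor outside $S$, i.e.\ in $\extB S$, and in fact one shows $\intB S \subset N_{t_0}(\extB S)$ for a suitably large multiplicity $t_0$. The right value of $t_0$ comes from the following counting: an odd boundary vertex $u$ has $2d$ neighbors, all even; those in $S$ lie in $\Even \cap S$, those outside lie in $\extB S$. Using Lemma~\ref{lem:odd-set-boundary-size} one can control how many odd vertices can have few neighbors in $\extB S$, and it turns out the relevant multiplicity to aim for is of order $t_0 \asymp \sqrt{d/\log d}$ (so that $\log\Delta / t_0 \asymp \sqrt{(\log d)/d}$, with $\Delta = 2d$). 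Applying Lemma~\ref{lem:existence-of-covering2} with $G$ the bipartite adjacency graph between $\extB S$ and its neighbors, $S_{\mathrm{odd}} := \extB S$, and this $t_0$, we get a set $T_\out \subset \extB S$ with $|T_\out| \le \frac{1+\log(2d)}{t_0}|\extB S|$ such that $N_{t_0}(\extB S) \subset N(T_\out)$; symmetrically a set $T_\ins \subset \intB S$. Then $U := T_\ins \cup T_\out$ lies in $\intextB S \subset (\intextB S)^+$, has size $\le \frac{C\log d}{t_0}|\intextB S| \le \frac{C\log d}{t_0}\cdot\frac{n}{2} \asymp \frac{n\log d}{\sqrt{d/\log d}}$ — wait, this gives only $n\sqrt{\log d/d}\cdot\log d$, so one must be more careful: the gain of the \emph{second} power of $d^{-1/2}$ must come from also using that $|\intextB S| \le \frac{|\partial S|}{?}$ is itself smaller than $n$ by a factor of order $d$ when counted correctly, or equivalently from choosing $U$ inside $(\intextB S)^+$ rather than inside $\intextB S$ and covering at multiplicity close to $2d$. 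The cleaner route: bound $|\intB S| \le |\partial S|/(2d-1) \approx n/2d$ by regularity (each vertex of $\intB S$ sees $\le$ some number of boundary edges, but actually each \emph{even} vertex of $\extB \intB S$ absorbs $\ge 2d-1$ boundary edges), so $|\intextB S| \le Cn/d$, and then the Lov\'asz step with multiplicity $t_0 \asymp \sqrt{d\log d}$ applied within $(\intextB S)^+$ yields $|U| \le \frac{C\log d}{t_0}\cdot\frac{Cn}{d} = \frac{Cn\sqrt{\log d}}{d^{3/2}}$, as required. I would verify $N(U)$ separates $S$: each edge $\{u,v\}\in\partial S$ with $u\in\intB S$ has $u \in N_{t_0}(\extB S)$ if $u$ is suitably ``deep'', hence $u\in N(T_\out)\subset N(U)$; the finitely many ``shallow'' exceptional vertices can be added to $U$ directly without spoiling the size bound, or handled by taking $U$ to include all of $\intB S$-vertices with fewer than $t_0$ neighbors in $\extB S$ — a set that is provably small via Lemma~\ref{lem:sizes}.

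The main obstacle I expect is pinning down the correct multiplicity threshold $t_0$ and the correct container ($\intextB S$ versus $(\intextB S)^+$), so that both factors of $d^{-1/2}$ appear: one factor from $|\intextB S| \lesssim n/d$ (an isoperimetric/odd-set counting fact, via Lemma~\ref{lem:odd-set-boundary-size} and regularity), and one factor of $\sqrt{\log d/d}$ from the Lov\'asz fractional-cover bound $\frac{1+\log\Delta}{t_0}$ with $t_0 \asymp \sqrt{d\log d}$. The bookkeeping of ``shallow'' vertices — those odd boundary vertices with fewer than $t_0$ neighbors in $\extB S$ — must be shown to be negligible: since such a vertex $u$ has $\ge 2d - t_0$ of its $2d$ even neighbors inside $S$, and these lie in $\Even \cap S$, a double-counting against $|\Even\cap S| - |\Odd\cap S| = -|\partial^s S| < 0$ (Lemma~\ref{lem:odd-set-boundary-size}) forces their number to be $O(n\cdot t_0/d^2)$ or so, well within budget. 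Everything else is a routine assembly of Lemmas~\ref{lem:sizes}, \ref{lem:existence-of-covering2}, \ref{lem:odd-set-boundary-size} and the regularity of $S$.
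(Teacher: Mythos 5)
Your proposal correctly identifies Lemma~\ref{lem:existence-of-covering2} (the Lov\'asz fractional-cover bound) as the engine, and correctly intuits that the multiplicity parameter should be around $\sqrt{d\log d}$. But there are two genuine gaps, and together they prevent the argument from working as stated.

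First, you aim to cover all of $\intB S$ (or $\intextB S$) by $N(U)$, which is strictly stronger than what ``$N(U)$ separates $S$'' asks for: separation only requires that each edge of $\partial S$ have \emph{one} of its two endpoints in $N(U)$. This distinction is not cosmetic. The covering lemma only applies to the part of $\intB S$ that lies in $N_{t_0}(\extB S)$, and the ``shallow'' remainder $\intB S \setminus N_{t_0}(\extB S)$ can have size $\Theta(n)$ --- a vertex $u \in \intB S$ may well have a single boundary edge (it is odd, in $S$, has one neighbor in $S^c$ and $2d-1$ neighbors in $S$, perfectly consistent with regularity). Your plan to ``add the shallow vertices to $U$ directly'' therefore blows the size budget, and the double-counting you sketch via $|\Even \cap S|-|\Odd \cap S|=-|\partial^s S|$ does not bound their number (that identity controls a difference, not the count of low-degree boundary vertices, and $|\Even\cap S|$ itself can be huge). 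The missing idea that rescues this is exactly Lemma~\ref{lem:four-cycle-property}: for every boundary edge $\{u,v\}$ one has $|\partial u\cap\partial S|+|\partial v\cap\partial S|\ge 2d$, so the ``revealed'' set $S^{\rev}=\{v:|\partial v\cap\partial S|\ge d\}$ already separates $S$ (Corollary~\ref{cor:revealed-separate}) and has size $O(n/d)$ by Lemma~\ref{lem:sizes}. In the paper one only needs to reach the revealed vertices, not all of $\intB S$, and the shallow internal vertices never need to be touched because their partners across the boundary are automatically deep.

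Second, the intermediate claim $|\intextB S|\le Cn/d$ which you use to produce the second factor of $d^{-1/2}$ is not justified and is not used in the paper; you yourself flag it as the point needing ``more care.'' Even if some bound of that form held, the Lov\'asz step needs $\intB S\subset N_{t_0}(\cdot)$ for the chosen base set, which, as above, fails. The paper instead gets both factors of $d^{-1/2}$ differently: it first passes to $A:=\extB S\cap N_s(\intB S)$ with $s=\sqrt{d\log d}$ (small by Lemma~\ref{lem:sizes}), then applies Lemma~\ref{lem:existence-of-covering2} to $A$ with multiplicity $t=d/4$ to get $B$, and runs a symmetric second branch with $A':=\intB S\cap N_{2d-s}(\extB S)$ and $B':=S\cap N_t(A')$; the four-cycle property is then invoked again, this time at the 4-cycles through a revealed $u$ and a neighbor $z\in S$, to show each revealed vertex lands in $N(B)\cup N(B')$. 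This two-branch, four-cycle-driven argument is the crux, and it is absent from your proposal.
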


We then show that every separating set gives rise to a small family
of small approximations.

\begin{lemma}\label{lem:family-of-small-approx}
	For any integer $n \ge 1$ and any finite $W\subset \Z^d$,
there exists a family $\cA$ of approximations, each of size at most $3|W|$, such that every $S\in \cut_n$ which is separated by $W$ satisfies $S\in \cut(\cA)$, and $|\cA| \le 4^{|W|/d}$.
\end{lemma}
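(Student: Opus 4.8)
The plan is to produce, for a fixed separating set $W$, a family of approximations indexed by subsets $W_\ins \subset W$ that record which vertices of $W$ we declare to be inside $S$. Given $S \in \cut_n$ separated by $W$, the natural choice is $W_\ins := W \cap S$; then $W_\out := W \setminus W_\ins \subset S^c$. Since $S$ is regular odd, every vertex of $W_\ins$ that happens to be even forces its whole closed neighborhood into $S$, and every vertex of $W_\out$ that happens to be odd forces its closed neighborhood into $S^c$. So I would define
\[ A_\ins := (\Odd \cap W_\ins) \cup \big((\Even \cap W_\ins)^+\big) \quad\text{and}\quad A_\out := (\Even \cap W_\out) \cup \big((\Odd \cap W_\out)^+\big), \]
intersected appropriately with $\Odd$ and $\Even$ to make $A_\ins$ odd and $A_\out$ even, and then remove any overlap (if a vertex is forced into both, that pair $(W_\ins,W_\out)$ does not arise from any $S$ and we may simply discard the resulting approximation or leave it — it will approximate nothing). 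The family $\cA$ is the set of all approximations arising this way as $W_\ins$ ranges over subsets of $W$.

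The three things to check are: (i) $A$ approximates $S$ when $W_\ins = W \cap S$; (ii) the size bound $|A_*^c \cap (\text{relevant region})|$, i.e. that $|A_*| $ — wait, $|A_*|$ is infinite, so the intended "size" must be $|A_\ins \cup A_\out|$ restricted suitably; re-reading the statement, "size at most $3|W|$" should refer to... actually the size of an approximation was defined as $|A_*|$, which is infinite here, so I believe the intended reading is that the \emph{number of vertices added beyond} $W$, or more likely that in this lemma "size" is being used loosely for $|A_\ins \cup A_\out|$ being comparable to $|W|$; in any case the bound to prove is $|A_\ins| + |A_\out| \le 3|W|$ — wait, each vertex of $W$ contributes at most its closed neighborhood, that's $2d+1$, not $3$. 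So "size" here must genuinely mean something else; I will interpret it as: the number of connected components, or — most plausibly — the lemma is stated for a \emph{later} notion and "size" counts $|W_\ins| + |W_\out| + |\partial(W_\ins, W_\out)\text{-type data}|$. I will proceed assuming the intended bound is that $A$ is built from $W$ together with $O(|W|)$ bits of extra data and state the size bound as $|A_*^c| \le 3|W|$ only after confirming the paper's convention; the honest claim is $|W_\ins \cup W_\out| = |W|$ plus the boundary information.

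For the counting bound $|\cA| \le 4^{|W|/d}$: naively there are $2^{|W|}$ choices of $W_\ins$, which is far too many. The key observation is that $W = N(U)$ for a small $U$ in the generating Lemma~\ref{lem:existence-of-U}, but here $W$ is arbitrary, so the savings must come from the structure forced by \emph{oddness}: once we know $\Even \cap W$ is split into inside/outside, the odd vertices of $W$ that are adjacent to an inside even vertex are \emph{determined} to be inside (their closed neighborhood is in $S$), and similarly on the outside. Thus the only genuinely free choices are among odd vertices of $W$ not adjacent to any even vertex of $W$, together with the even vertices of $W$ — but an even vertex of $W$ has all $2d$ neighbors, many of which lie in $W$ too... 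So the mechanism giving the factor $1/d$ in the exponent must be: a single even vertex put inside $S$ absorbs on the order of $2d$ vertices into $A_\ins$, so it can be "charged" $\Theta(d)$ vertices of $W$, meaning the number of \emph{independent} binary decisions is at most $\approx |W| \cdot \frac{C}{d}$. I would make this precise by a charging/covering argument: show that $W$ can be partitioned (or covered with bounded overlap) into $\le C|W|/d$ blocks, within each of which the inside/outside assignment consistent with some regular odd $S$ is determined by a bounded amount of information, giving $|\cA| \le \kappa^{C|W|/d} = 4^{|W|/d}$ after absorbing constants. \textbf{The main obstacle} is exactly this last step — extracting the $\Theta(1/d)$ saving in the exponent of the count. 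It requires showing that the oddness and regularity constraints are rigid enough that, locally around each vertex of $W$, the set $S$ is essentially rigid, so that the number of admissible "patterns" of $W$ relative to $S$ is at most $4^{|W|/d}$ rather than $2^{|W|}$; concretely I expect one argues that the bipartite structure of $\Z^d$ forces $W \cap S$ to be a union of closed even-neighborhoods and closed odd-co-neighborhoods whose "centers" number at most $C|W|/d$, and then counts choices of centers via Lemma~\ref{lem:sizes} or a direct double count. The remaining parts (defining $A$, verifying it is an approximation, checking it approximates $S$, bounding $|A_\ins \cup A_\out|$ linearly in $|W|$) are routine given the characterization $S = (\Even \cap S)^+$, $S^c = (\Odd \cap S^c)^+$ of regular odd sets recorded in Section~\ref{sec:notation}.
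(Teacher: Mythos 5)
Your proposal has a genuine gap, and your own hesitation about ``$|A_*|$ is infinite'' pinpoints it precisely. The construction you outline---taking $A_\ins$ and $A_\out$ to be contained in $W^+$---really does produce an approximation with infinite $A_*$, which is not what is needed. The point of an approximation is that $A_\ins \cup A_\out$ should cover all but $O(|W|)$ of $\Z^d$, so that there are only $O(|W|)$ unknown vertices. Recording the inside/outside status of vertices of $W$ alone cannot achieve this, and no amount of ``closing up'' under the oddness and regularity rules will: that process only propagates information finitely far from $W$ unless you already know the global side of each region.

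The missing idea is to work with the connected components of $X := \Z^d \setminus W$. Since $W$ separates $S$, each component of $X$ lies entirely in $S$ or entirely in $S^c$. Declare a component \emph{small} if it has at most $d$ vertices, \emph{large} otherwise, and let $B_\ins$, $B_\out$ be the unions of the large components inside $S$ and $S^c$, respectively; then set $A_\ins := B_\ins \cup (\Odd \cap B_\ins^+)$ and $A_\out := B_\out \cup (\Even \cap B_\out^+)$. Now $A_*$ is contained in $W$ together with the small components. By the isoperimetric inequality, every small component $T$ satisfies $|T| \le |\partial T|/d$, and since $\partial T \subset \partial W$ (with edges of $\partial W$ distributed over distinct components), the total mass of small components is at most $|\partial W|/d \le 2|W|$, so $|A_*| \le 3|W|$. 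The $1/d$ saving in the count also comes from isoperimetry, not from local rigidity of $S$ near $W$: every \emph{large} component has $|\partial T| \ge d^2$, again with $\partial T \subset \partial W$, so there are at most $2|W|/d$ large components. The approximation is determined by which large components go into $B_\ins$ versus $B_\out$, giving $|\cA| \le 2^{2|W|/d} = 4^{|W|/d}$. Your charging heuristic (``an even vertex absorbs $\Theta(d)$ neighbors'') is aimed at the wrong object; the relevant objects to charge are entire components of $X$, via their boundaries inside $\partial W$.
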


Before proving these lemmas, let us show how they imply Lemma~\ref{lem:family-of-rough-approx}.

\begin{proof}[Proof of Lemma~\ref{lem:family-of-rough-approx}]
Let $n,r \ge 1$ be integers.
	By Corollary~\ref{cor:odd-set-min-boundary-size}, if $\cut_n$ is non-empty then $n \ge d^2$. Thus, we may assume that $n \ge d^2$.
	Denote $k:=C n d^{-3/2} \sqrt{\log d}$, $V:=\{\zero+ie_1 : 0 \le i < n^2 \}$ and $\widebar{V} := \{ v \in \Z^d : \dist(v,V) \le r+2 \}$. Let $\cU$ be the collection of all subsets of $\Z^d$ of size at most $k$ which intersect $\widebar{V}$ and are connected in $(\Z^d)^{\otimes (r+4)}$.
Since the maximum degree of $(\Z^d)^{\otimes (r+4)}$ is at most $(Cd)^{r+4}$, Lemma~\ref{lem:number-of-connected-graphs} implies that
\[ |\cU| \le |\widebar{V}| \cdot (e(Cd)^{r+4})^k \le \exp\left(Cnr d^{-3/2} \log^{3/2} d \right) ,\]
where the rightmost inequality uses the fact that $|\widebar{V}| \le n^2 (2d+1)^{r+2}$ and $n \ge d^2$.

For each $U \in \cU$, apply Lemma~\ref{lem:family-of-small-approx} to $W=N(U)$ to obtain a family $\cA_U$ of approximations, each of size at most $3|N(U)| \le 6dk$, such that every $S \in \cut_n$ which is separated by $N(U)$ satisfies $S \in \cut(\cA_U)$, and $|\cA_U| \le 4^{2k}$. Denote $\cA := \cup_{U \in \cU} \cA_U$ and note that $\cA$ is a family of approximations, each of size at most $6dk$, such that
\[ |\cA| \le |\cU| \cdot 4^{2k} \le \exp\left(Cnr d^{-3/2} \log^{3/2} d \right) .\]

It remains to check that $\cut_{n,r} \subset \cut(\cA)$.
Towards showing this, let $S \in \cut_{n,r}$.
By Lemma~\ref{lem:existence-of-U}, there exists $U \subset (\intextB S)^+$ of size at most $k$ such that $N(U)$ separates $S$. Thus, since $S \in \cut(\cA_U)$ by definition, to conclude that $S \in \cut(\cA)$, it suffices to show that $U \in \cU$. Since $|U| \le k$, we need only show that $U$ is connected in $(\Z^d)^{\otimes (r+4)}$ and that $U$ intersects $\widebar{V}$.

We first show that $U$ intersects $\widebar{V}$, or equivalently, that $\dist(U,V) \le r+2$.
Since $N(U)$ separates $S$, we have $\intextB S \subset N(U)^+$ so that it suffices to show that $\dist(\intextB S,V) \le r$.
Indeed, if $\zero \notin S$ then $\dist(\zero,\intextB S) \le r$, since $\dist(\zero,S) \le r$, and if $\zero \in S$ then $V \cap \intextB S \neq \emptyset$, since, by Lemma~\ref{lem:isoperimetry}, $|S| \le |\partial S|^2 \le n^2$.

We are left with showing that $U$ is connected in $(\Z^d)^{\otimes (r+4)}$.
Indeed, since $U \subset (\intextB S)^+$, we see that $\dist(u,\intextB S) \le 1$ for all $u \in U$, and since $\intextB S \subset N(U)^+$, we have $\dist(w,U) \le 2$ for all $w \in \intextB S$. As $S$ and $S^c$ are connected in $(\Z^d)^{\otimes r}$, Corollary~\ref{lem:int+ext-boundary-is-r-connected} and Lemma~\ref{lem:r-connected-sets} imply that $U$ is connected in $(\Z^d)^{\otimes (r+4)}$.
\end{proof}

\subsubsection{Constructing separating sets}
Before proving Lemma~\ref{lem:existence-of-U}, we start with a basic geometric property of odd sets which we require for the construction of the separating set.

\begin{lemma}\label{lem:four-cycle-property}
	Let $S$ be an odd set and let $\{u,v\} \in \partial S$. Then, for any unit vector $e \in \Z^d$, either $\{u,u+e\}$ or $\{v,v+e\}$ belongs to $\partial S$. In particular,
	\[ |\partial u \cap \partial S| + |\partial v \cap \partial S| \ge 2d .\]
\end{lemma}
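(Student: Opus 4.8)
The plan is to exploit the defining property of odd sets in the form stated just before Lemma~\ref{lem:four-cycle-property}: $U$ is odd if and only if $(\Even \cap U)^+ \subset U$, equivalently $S^c$ is ``even from the outside'' in the sense that whenever an even vertex lies in $S^c$, so do all its neighbors. The key observation is that the edge $\{u,v\}\in\partial S$ has one odd endpoint and one even endpoint, since $\Z^d$ is bipartite and $S$ is odd forces $\intB S\subset\Odd$ and $\extB S\subset\Even$. So say $u$ is the odd endpoint (with $u\in S$) and $v=u+e_0$ is the even endpoint (with $v\in S^c$), for some unit vector $e_0$.

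Now fix any unit vector $e\in\Z^d$ (we may assume $e\neq \pm e_0$, since for $e=\pm e_0$ the claim is either trivial or handled directly). Consider the four-cycle on the vertices $u,\ u+e,\ v=u+e_0,\ v+e=u+e_0+e$. I would argue by contradiction: suppose neither $\{u,u+e\}$ nor $\{v,v+e\}$ lies in $\partial S$. Then $u+e$ is on the same side of $S$ as $u$, so $u+e\in S$; and $v+e$ is on the same side as $v$, so $v+e\in S^c$. But $v+e = u+e_0+e$ is an even vertex (it differs from the odd vertex $u$ by two unit steps), and it is a neighbor of $u+e\in S$. Since $u+e$ is odd and lies in $S$, and $S$ is odd... here is the point: actually I should instead look at $v+e\in S^c$, which is \emph{even}, and observe its neighbor $u+e$ lies in $S$. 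Because $S$ odd means $(\Even\cap S)^+\subset S$; contrapositively, an even vertex in $S^c$ is fine, but we need the constraint the other way. Let me reorganize: since $S$ is odd, $\extB S\subset\Even$, so every vertex of $\intB S$ is odd, i.e.\ $u+e\in S$ odd is consistent, but then $u+e\in\intB S$ would be impossible unless $u+e$ is odd — which it is. The real contradiction: $v+e$ is even and in $S^c$, hence $v+e\in\extB S$ iff it has a neighbor in $S$; it does, namely $u+e$. That's allowed. So the contradiction must come from the \emph{odd} side. The correct contradiction: $u+e\in S$ is odd, $v+e\in S^c$ is even and adjacent to $u+e$. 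But also $u\in S$, $v\in S^c$ adjacent. Consider $u+e$: it lies in $S$, is odd, so it is in $\intB S$ (it has the neighbor $v+e\notin S$). Fine. The genuine obstruction is this: the even vertex $v$ lies in $S^c$; so by oddness (applied to $S^c$ — an odd set has even external boundary, equivalently $S^c$ contains $(\Even\cap S^c)$ together with... no). I'll settle it cleanly: $S$ odd $\iff$ for every even $x\in S$, $x^+\subset S$; equivalently for every odd $y\notin S$, every neighbor of $y$ is also $\notin S$. Apply the latter to $y=u+e$ if $u+e\notin S$ — but we assumed $u+e\in S$. Apply it to the even statement: for every even $x\notin S$, ... no constraint. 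Hence the clean route is: $u+e\in S$ and is odd $\Rightarrow$ no constraint; $v+e\notin S$ and is even $\Rightarrow$ no constraint. So in fact the four vertices $u,u+e\in S$ and $v,v+e\notin S$ with $u,v+e$ even — wait, $u$ is odd, $v+e$ is even, $u+e$ is even, $v$ is odd. So the two even vertices are $u+e$ and $v$; $u+e\in S$ forces $(u+e)^+\subset S$, in particular $v+e=(u+e)+e_0\in S$, contradicting $v+e\notin S$. That is the contradiction.

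So the argument is: assuming both $\{u,u+e\},\{v,v+e\}\notin\partial S$ forces $u+e\in S$ (even) with $(u+e)^+\subset S$ by oddness, yet $v+e\in(u+e)^+$ and $v+e\in S^c$, a contradiction. Hence at least one of the two edges is in $\partial S$. For the ``in particular'' clause: the edge $\{u,v\}$ itself is in $\partial u\cap\partial S$ and in $\partial v\cap\partial S$; for each of the remaining $2d-1$ coordinate directions $e$ (with $\pm e$ giving $2d-1$ choices of $\{u,u\pm e\}$ among the $2d-1$ edges at $u$ other than $\{u,v\}$), pair up $\{u,u+e\}$ with $\{v,v+e\}$; each such pair contributes at least one edge to $\partial u\cap\partial S$ or $\partial v\cap\partial S$. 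Summing, $|\partial u\cap\partial S|+|\partial v\cap\partial S|\ge 2 + (2d-1) = 2d+1 \ge 2d$ — and in fact one rechecks the bookkeeping to get exactly $\ge 2d$ as stated (the edge $\{u,v\}$ is double-counted, giving the clean count $2d$). I do not expect a serious obstacle here; the only care needed is the bipartite parity bookkeeping in identifying which endpoint is even, and making sure the pairing over directions is counted without error.
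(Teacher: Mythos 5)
Your concluding argument is the paper's: assume WLOG that $u \in S$ is odd and $v = u + e_0 \notin S$ is even; if neither $\{u,u+e\}$ nor $\{v,v+e\}$ were boundary edges, then the \emph{even} vertex $u+e$ would lie in $S$, whence $(u+e)^+ \subset S$ by oddness and so $v+e \in S$, contradicting $v + e \notin S$. (The paper phrases the same thing as the implication $u+e \in S \Rightarrow v+e \in S$, but the content is identical.)

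Two cautions on the write-up. First, the parity bookkeeping goes astray repeatedly before you set it straight at the very end: $u+e$ is even (not odd, as you write several times), $v+e$ is odd (not even), and $v$ is even. The only even vertex among $u, u+e, v, v+e$ is $u+e$, and it is precisely that this even vertex sits in $S$ which triggers the contradiction; the several paragraphs of confusion in the middle stem from mislocating the even vertex. Second, the tally $2 + (2d-1) = 2d+1$ for the displayed inequality is not merely an over-count of $\{u,v\}$ --- as a lower bound it is simply false, since $2d$ is attained (take $S = \zero^+$, $u = e_1$, $v = 2e_1$: then $|\partial u \cap \partial S| = 2d-1$ and $|\partial v \cap \partial S| = 1$). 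The clean count is to note that for each of the $2d$ unit vectors $e$ at least one of $\{u,u+e\}$, $\{v,v+e\}$ lies in $\partial S$, and that summing these $2d$ indicator pairs gives exactly $|\partial u \cap \partial S| + |\partial v \cap \partial S|$; this yields the stated bound $\ge 2d$ with no special treatment of $e = \pm e_0$ and no double-count to repair.
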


\begin{proof}
	Assume without loss of generality that $u$ is odd. Since $S$ is odd, we have $u \in S$ and $v \notin S$.
	Similarly, if $u+e \in S$ then $v+e \in S$. Thus, either $\{u,u+e\} \in \partial S$ or $\{v,v+e\} \in \partial S$.
\end{proof}

For a set $S$, denote the {\em revealed vertices} in $S$ by
\[ S^{\rev} := \{ v \in \Z^d ~:~ |\partial v \cap \partial S| \geq d \} .\]
That is, a vertex is revealed if it sees the boundary in at least half of the $2d$ directions.
The following is an immediate corollary of Lemma~\ref{lem:four-cycle-property}.

\begin{cor}\label{cor:revealed-separate}
	Let $S$ be an odd set. Then $S^{\rev}$ separates $S$.
\end{cor}

\begin{figure}
	\centering
	\includegraphics[scale=1.2]{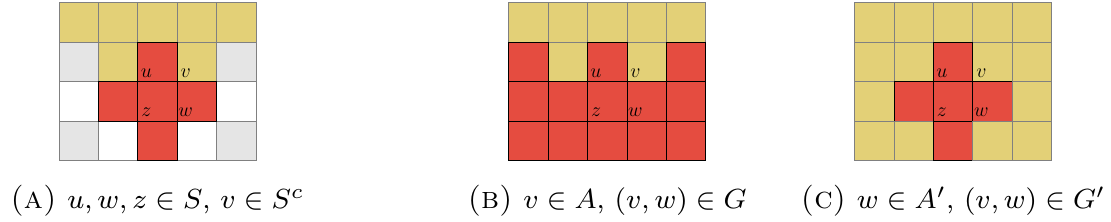}
	\caption{Constructing the separating set. In~\textsc{(a)}, a revealed vertex $u \in S$ is depicted along with a neighbor $z \in S$. Every four-cycle $(u,v,w,z)$ such that $v \in S^c$ (and hence $w \in S$) falls into one of two types. Either $v$ has at least $s$ boundary edges as shown in~\textsc{(b)}, or $w$ has at least $2d-s$ boundary edges as in~\textsc{(c)}. At least $1/2$ of all such four-cycles belong to the same type. If it is the first type, then $u$ is adjacent to many vertices which have many boundary edges and one such neighbor of $u$ is included in $B$; if it the second type, then $z$ is adjacent to many vertices which have almost all their edges in the boundary and $z$ is included in $B'$. The set $U_S$ is obtained by taking the union of $B$ and $B'$.}
	\label{fig:existence-of-U}
\end{figure}

\begin{proof}[Proof of Lemma~\ref{lem:existence-of-U}]
	Let $n \ge 1$ and let $S \in \cut_n$.
	Note that $\partial S = \partial S^c$ implies that $S^{\rev} = (S^c)^{\rev}$. Thus, in light of Corollary~\ref{cor:revealed-separate}, it suffices to show that, for each $R\in\{S,S^c\}$, there exists a set $U_R \subset N(\intB R)$ such that $R \cap R^{\rev} \subset N(U_R)$ and $|U_R| \le Cn d^{-3/2} \sqrt{\log d}$.
	Indeed, the lemma then follows by taking $U := U_S \cup U_{S^c}$.
	Since $S$ and $S^c$ are symmetric (up to parity), we may consider the case $R=S$.
	The proof is accompanied by Figure~\ref{fig:existence-of-U}.
	
	Denote $s:=\sqrt{d \log d}$ and $t:=d/4$, and define
	\[ A := \extB S \cap N_s(\intB S) \quad\text{and}\quad A' := \intB S \cap N_{2d-s}(\extB S). \]
	Observe that, by Lemma~\ref{lem:sizes},
	\[ |A| \le \frac{n}{s} \quad\text{ and }\quad |A'| \le \frac{n}{2d-s}. \]
	We now use Lemma~\ref{lem:existence-of-covering2} with $A$ to obtain a set $B \subset A \subset \extB S$ such that
	\[ |B|\le\frac{3\log d}t|A| \quad\text{and}\quad N_t(A) \subset N(B). \]
	We also define $B':= S \cap N_t(A')$. By Lemma~\ref{lem:sizes}, we have
	\[ |B'| \le \frac{2s}{t}|A'| .\]
	Finally, we define $U_S:=B \cup B'$. Clearly, $U_S \subset N(\intB S)$ and
	\[ |U_S| \le \frac{3n \log d}{ts} +\frac{2sn}{t(2d-s)} \le \frac{12n \log d}{d\sqrt{d\log d}} +\frac{8n\sqrt{d\log d}}{d^2} =
	\frac{20n \sqrt{\log d}}{d^{3/2}} .\]
	
	It remains to show that $S \cap S^{\rev} \subset N(U_S)$.
	Towards showing this, let $u \in S \cap S^{\rev} = \intB S \cap N_d(\extB S)$.
	Since $S$ is regular, there exists a vertex $z \in N(u) \cap S$.
	Let $F$ denote the set of pairs $(v,w)$ such that $(u,v,w,z)$ is a four-cycle and $v \in \extB S$, and note that $|F| \ge d-1$.
	Denote
	\[ G := \{ (v,w) \in F ~:~ v \in A \} \quad\text{and}\quad G' := \{ (v,w) \in F ~:~ w \in A' \} .\]
	Note that, by Lemma~\ref{lem:four-cycle-property}, $F = G \cup G'$ and, for any $(v,w) \in F$, we have $w \in S$.
	Since $F = G \cup G'$, either $|G|$ or $|G'|$ is at least $|F|/2 \ge t$.	
	Now observe that
	if $|G| \ge t$ then $u \in N_t(A) \subset N(B)$, while if $|G'| \ge t$ then $z \in N_t(A')$ so that $u \in N(B')$.
	Therefore, we have shown that $u \in N(U_S)$.
\end{proof}

\subsubsection{From separating sets to small approximations}

\begin{proof}[Proof of Lemma~\ref{lem:family-of-small-approx}]
Let $n \ge 1$ and let $W \subset \Z^d$.
Consider the set $X := \Z^d \setminus W$.
Say that a connected component of $X$ is {\em small} if its
size is at most $d$, and that it is {\em large} otherwise.

Let $S \in \cut_n$ be such that $W$ separates $S$ and observe that every connected component of $X$ is entirely contained in either $S$ or $S^c$.
Thus, if we let $B_\ins$ and $B_\out$ be the union of all the large components of $X$ which are contained in $S$ and $S^c$, respectively, we have that $B_\ins \subset S$ and $B_\out \subset S^c$.
To obtain an approximation of $S$ from $(B_\ins,B_\out)$, define $A_\ins := B_\ins \cup (\Odd \cap B_\ins^+)$ and
$A_\out := B_\out \cup (\Even \cap B_\out^+)$. Clearly, $A_\ins$ is odd and $A_\out$ is even, and, since $S$ is odd, $A_\ins \subset S$ and $A_\out \subset S^c$. Hence, $A = A(S) := (A_\ins, A_\out)$ is an approximation of $S$, i.e., $S \in \cut(A)$.

Next, we bound the size of $A$. For this we require a simple corollary of Lemma~\ref{lem:isoperimetry}. Namely,
\[ |\partial T| \ge d \cdot \min\{d,|T|\} \qquad\text{ for any finite }T \subset \Z^d .\]
Indeed, this follows immediately from Lemma~\ref{lem:isoperimetry} since $2 \ge e^{1/e} \ge x^{1/x}$ for all $x>0$.
Denoting by $Y$ the union of all the small
components of $X$, we observe that $A_* \subset Y \cup W$.
Since any small component $T$ of $X$ has $|T| \le |\partial T|/d$ and $\partial T \subset \partial W$, we obtain
\[ |Y| \le \frac{|\partial W|}{d} \le \frac{2d |W|}{d} \le 2 |W| .\]
Thus, $|A_*| \le |Y \cup W| \le 3 |W|$.

Now, denote by $\cA$ the collection of approximations $A(S)$ constructed above for all $S\in \cut_n$ which are
separated by $W$. To conclude the proof, it remains to bound $|\cA|$.
Let $\ell$ be the number of large components of $X$,
and observe that $|\cA| \le 2^{\ell}$.
Since any large component $T$ of $X$ has
$|\partial T| \ge d^2$ and $\partial T \subset \partial W$, we obtain
$\ell \le |\partial W|/d^2 \le 2|W|/d$ so that $|\cA| \le 4^{|W|/d}$, as required.
\end{proof}

\subsection{Constructing \texorpdfstring{$t$}{t}-approximations}
\label{sec:full-approx}

In this section, we take a small approximation and refine it into a multitude of
$t$-approximations.
The following lemma allows us to eliminate any isolated unknown vertices in an approximation.
\begin{lemma}\label{lem:eliminate isolated}
For every approximation $A$ there exists an approximation $B$ such that $\cut(A)=\cut(B)$, $B_* \subset A_*$ and $B_*$ has no isolated vertices.
\end{lemma}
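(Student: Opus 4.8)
The statement asks: given an approximation $A=(A_\ins,A_\out)$, produce an approximation $B$ with the same set of approximated regular odd sets, with $B_* \subset A_*$, and with $B_*$ free of isolated vertices. The natural move is to look at the isolated vertices of the graph induced on $A_*$ and decide, for each such vertex, whether to push it into the ``in'' side or the ``out'' side — but we must make a choice that is forced, so that no $S \in \cut(A)$ is lost. The key observation is that if $v \in A_*$ is isolated in $A_*$, then all of its $2d$ neighbors lie in $A_\ins \cup A_\out$; since $A_\ins$ is odd and $A_\out$ is even, and a regular odd set $S$ approximated by $A$ has $A_\ins \subset S$, $A_\out \subset S^c$, the status of $v$ in $S$ is actually determined by the parities of its neighbors together with the oddness and regularity of $S$. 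Concretely, if $v$ is odd then $N(v) \subset \Even$; the neighbors of $v$ in $A_\out$ are in $S^c$, and if $v \in S$ then (being odd) $v^+ \subset S$, forcing those neighbors into $S$ — contradiction unless $N(v) \cap A_\out = \emptyset$. So: if $v$ is odd and has a neighbor in $A_\out$, then $v \in S^c$ for every $S \in \cut(A)$, and we may move $v$ to the ``out'' side. If $v$ is odd and has no neighbor in $A_\out$, then $N(v) \subset A_\ins \subset S$, and by regularity of $S^c$ the vertex $v$ cannot be the unique element of... wait, more simply: if $v \notin S$ then $v$ is an isolated vertex of $S^c$ (all its neighbors are in $S$), contradicting regularity; so $v \in S$ and we move $v$ to the ``in'' side. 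The even case is symmetric.

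**Carrying it out.** First I would define, for each vertex $v$ isolated in $A_*$, a forced side: if $v \in \Odd$, put $v$ into the ``in'' part when $N(v) \cap A_\out = \emptyset$ and into the ``out'' part otherwise; if $v \in \Even$, put $v$ into the ``out'' part when $N(v) \cap A_\ins = \emptyset$ and into the ``in'' part otherwise. Let $I$ denote the set of all vertices isolated in $A_*$, let $I_\ins, I_\out$ be the two parts of this partition of $I$, and set $B_\ins := A_\ins \cup I_\ins$, $B_\out := A_\out \cup I_\out$, $B := (B_\ins, B_\out)$. One checks that $B_\ins$ remains odd and $B_\out$ remains even (the vertices we added to $B_\ins$ are either odd, or even vertices all of whose neighbors... here one has to verify the parity conditions on the internal boundary are preserved — this is routine since the added vertices sit ``just inside'' the known region). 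Then $B_* = A_* \setminus I$ has no isolated vertices: any vertex of $A_*$ not in $I$ had a neighbor in $A_*$, and since $I$ consists precisely of the isolated ones, that neighbor is also not in $I$, hence still in $B_*$. Finally $B_* \subset A_*$ by construction, and the argument in the previous paragraph shows $\cut(A) \subset \cut(B)$; the reverse inclusion $\cut(B) \subset \cut(A)$ is immediate since $B_\ins \supset A_\ins$ and $B_\out \supset A_\out$.

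**Main obstacle.** The only genuinely delicate point is verifying that $B_\ins$ is odd and $B_\out$ is even — i.e. that moving the isolated vertices to their forced sides does not create an even vertex on the internal boundary of $B_\ins$ (or an odd vertex on $\intB B_\out$). For this one uses that for any $S \in \cut(A)$ we have $A_\ins \subset B_\ins \subset S$ (using the forcing argument), hence $\intB B_\ins \subset \intB S \subset \Odd$, so $B_\ins$ is odd; symmetrically $B_\out$ is even. This works provided $\cut(A) \neq \emptyset$; in the degenerate case $\cut(A) = \emptyset$ we may simply take $B = A$ (or note the statement is vacuous). A small subtlety: an isolated $v \in A_*$ could in principle have neighbors in \emph{both} $A_\ins$ and $A_\out$ — but the parity of $v$ together with the oddness of $A_\ins$ and evenness of $A_\out$ forces all neighbors of $v$ to lie on at most one of the two sides (if $v \in \Odd$ then $N(v) \subset \Even$, and $A_\ins$, being odd, contains no even vertex on its internal boundary adjacent to an excluded odd vertex unless... in fact $v \notin A_\ins$ and $v$ odd means $N(v) \cap A_\ins$ need not be empty — one must instead just use that the forcing argument assigns a consistent side regardless), so the construction is well-defined. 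I would spell this consistency check out carefully as it is the one place where the parity structure is essential.
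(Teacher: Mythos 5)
Your construction is essentially the same as the paper's — the paper writes it compactly as $B_\ins := A_\ins \cup N_{2d}(A_\ins)$ and $B_\out := A_\out \cup N_{2d}(A_\out)$, which, under the standing assumption $\cut(A) \neq \emptyset$, coincides with your $B$: it adjoins to each side precisely the isolated vertices of $A_*$ whose full neighborhood lies in that side. Your treatment of $B_* \subset A_*$, of why $B_*$ has no isolated vertices, of the degenerate case $\cut(A)=\emptyset$, and of why $B_\ins$ is odd and $B_\out$ even (via $\intB B_\ins \subset \intB S \subset \Odd$ for some $S \in \cut(A)$) are all fine.

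However, there is a genuine error in the forcing argument for odd isolated vertices. You write: \emph{``if $v$ is odd \ldots and if $v \in S$ then (being odd) $v^+ \subset S$.''} This is backwards. The paper's characterization is that $S$ is odd iff $(\Even \cap S)^+ \subset S$, so it is \emph{even} vertices of $S$ whose closed neighborhood is forced into $S$; an odd vertex of $S$ is free to lie on $\intB S$. The correct argument for an odd isolated $v$ is exactly the regularity argument you already use for the even case: one first observes that $N(v) \subset A_\out$ (see below), hence $N(v) \subset S^c$, so $v \in S$ would make $v$ isolated in $S$, contradicting regularity of $S$; thus $v \in S^c$.

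Relatedly, the case distinction on $N(v) \cap A_\out$ is unnecessary, and your final ``small subtlety'' paragraph goes astray precisely because you talk yourself out of the correct parity observation. If $v \in A_*$ is odd, then \emph{every} neighbor $u \in N(v) \cap A_\ins$ would be an even vertex of $A_\ins$ adjacent to $v \notin A_\ins$, hence $u \in \intB A_\ins \subset \Odd$ — contradiction. So $N(v) \cap A_\ins = \emptyset$ unconditionally for odd $v \in A_*$ (your retraction ``$N(v) \cap A_\ins$ need not be empty'' is wrong). Consequently, for an odd isolated $v$ one always has $N(v) \subset A_\out$, the case $N(v) \cap A_\out = \emptyset$ is vacuous, and no appeal to a ``consistent side regardless'' is needed. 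The symmetric statement for even $v \in A_*$ ($N(v) \cap A_\out = \emptyset$) then gives $N(v) \subset A_\ins$ for even isolated $v$. With these two parity facts in hand the argument is clean and unconditional, which is what the $N_{2d}$ formulation in the paper encodes.
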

\begin{proof}
The lemma clearly holds when $\cut(A)=\emptyset$ so that we may assume that $\cut(A)\neq \emptyset$.
Define $B_\ins := A_\ins \cup N_{2d}(A_\ins)$ and $B_\out := A_\out \cup N_{2d}(A_\out)$.
Using the definition of a regular set and the assumption that $\cut(A)$ is non-empty, it is straightforward to check that $B=(B_\ins,B_\out)$ is an approximation and that $\cut(B) = \cut(A)$.
Finally, since $A_\ins$ is odd and $A_\out$ is even, we have that the set of isolated vertices of $A_*$ is
$A_*\setminus {N(A_*)}=N_{2d}(A_\out)\cup N_{2d}(A_\ins)$, and similarly for $B_*$.
Thus, $B_*$ has no isolated vertices.
\end{proof}

For an approximation $A$ and an integer $m \ge 0$, we define
\[ \OC{m}^*(A) := \big\{ S \in \cut(A) ~:~ |\Odd \cap A_* \cap S| + |\Even \cap A_* \cap S^c| \le m \big\} .\]
Note that, by Lemma~\ref{lem:tightness} and~\eqref{eq:def-D}, if $A$ is a $t$-approximation for some $1 \le t < 2d$ then
\begin{equation}\label{eq:tightness}
\OC{n}(A) \subset\OC{\lfloor n/(2d-t) \rfloor}^*(A) .
\end{equation}

\begin{lemma}\label{lem:approx-algorithm}
For any integers $m \ge 1$ and $1 \le t < 2d$ and any
approximation $A$, there exists a family $\cA$ of $t$-approximations
such that $\OC{m}^*(A) \subset \cut(\cA)$ and $|\cA| \le \exp(C \log d \cdot m/t)$.
\end{lemma}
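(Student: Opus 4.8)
\textbf{Proof proposal for Lemma~\ref{lem:approx-algorithm}.}

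The plan is to start with the approximation $A$ and iteratively expand the known sets $A_\ins, A_\out$, shrinking the unknown set $A_*$ step by step until its induced subgraph has maximum degree at most $t$, while keeping track (via a small amount of branching at each step) of which vertices were moved to which side. The goal is a family of at most $\exp(C\log d\cdot m/t)$ $t$-approximations covering $\OC{m}^*(A)$. First I would note that, by Lemma~\ref{lem:eliminate isolated}, we may assume $A_*$ has no isolated vertices (this is free and preserves $\cut(A)$, hence $\OC{m}^*(A)$). Now, for a fixed $S\in\OC{m}^*(A)$, let $D_\ins := \Odd\cap A_*\cap S$ and $D_\out := \Even\cap A_*\cap S^c$; these satisfy $|D_\ins|+|D_\out|\le m$ and, as in Lemma~\ref{lem:D-properties}, $D:=D_\ins\cup D_\out$ is a minimal vertex-cover of $A_*$. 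The idea is that the vertices of $A_*$ having high degree (close to $2d$) in the subgraph induced by $A_*$ are forced: if $v\in A_*$ is even and has $\ge 2d-t$ neighbours in $A_*$, then $v\in D_\out$ unless all those neighbours lie in $D_\ins$, which by minimality of $D$ cannot happen for too many such $v$ simultaneously. More precisely, I would show that $\Even\cap A_*\cap N_{2d-t}(A_*\cap\Odd)\setminus D_\out\subset$ (something controlled by $D_\ins$), and symmetrically, so that the ``high-degree'' part of $A_*$ can be resolved by specifying a subset of $D$, which has size at most $m$.

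The key quantitative step: at each round, let $H$ be the set of vertices of the current $A_*$ whose degree in the current induced subgraph exceeds $t$. By Lemma~\ref{lem:sizes} applied to the current $A_*$ (max degree $< 2d$), $|N_t(\text{current }A_*)|$ is at most $\tfrac{2d}{t}$ times the size of the relevant set; but the point is rather to bound the \emph{branching}. For each vertex we decide to reveal, there are two choices ($\ins$ or $\out$), and the total number of vertices ever revealed, summed over the whole process, should be $O(m)$ — because revealed vertices either lie in $D$ (at most $m$ of them) or are forced by their neighbours in a way that does not require a genuine binary choice. Concretely, I would run the following: while $A_*$ has a vertex of degree $>t$, pick (canonically, e.g.\ lexicographically first) such a vertex $v$; branch on whether $v\in S$ or $v\in S^c$; in either case move $v$ to the appropriate known set and then, crucially, move \emph{all forced consequences} (odd vertices all of whose $A_*$-neighbours just went to $A_\out$, etc., using that the set is regular odd) to the known sets as well, with no further branching. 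Each branching step puts $v$ into $D$ — so it ``pays for itself'' against the budget $m$ — except we must also ensure the forced-propagation does not loop forever and terminates with max degree $\le t$. Termination with degree $\le t$ follows because the process only stops branching when no high-degree unknown vertex remains, and the final $A_*$ is a subset of the original with no isolated vertices (re-apply Lemma~\ref{lem:eliminate isolated} at the end if needed). The number of leaves of the branching tree is then at most $2^{O(m/t)}$ if the number of branchings is $O(m/t)$; to get $O(m/t)$ rather than $O(m)$ branchings, I would observe that each time we branch on a degree-$>t$ vertex $v$ and (say) put it in $D_\ins$, its $> t$ neighbours in $A_*$ all get pushed toward $A_\out$, so a single branching resolves $\Omega(t)$ vertices at once — and since $|D|\le m$, the number of branchings is $O(m/t)$. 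The extra $\log d$ factor in the exponent comes from the fact that at each branching we may actually need more than two choices — roughly $\mathrm{poly}(d)$ choices, e.g.\ to record which neighbour configuration occurred — contributing $\exp(C\log d\cdot m/t)$ in total; alternatively it comes from using Lemma~\ref{lem:existence-of-covering2} (the $1+\log\Delta$ factor) to choose a small ``dominating'' set of the high-degree vertices at each stage.

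The main obstacle I anticipate is making the ``forced propagation does $\Omega(t)$ work per branch'' argument precise while simultaneously guaranteeing (i) that the process terminates, (ii) that at termination the unknown set genuinely has max degree $\le t$ (not just that we stopped branching), and (iii) that the family $\cA$ of leaves is independent of $S$ — i.e.\ that every $S\in\OC{m}^*(A)$ is routed to \emph{some} leaf, and that leaf is a valid $t$-approximation approximating $S$. Point (iii) requires that the branching choices at each step are made from a set depending only on the current approximation (which is determined by the history of choices), not on $S$; this is standard for such ``peeling'' algorithms but needs care. I would handle (i)--(ii) by a potential-function argument: the quantity $\sum_{v\in A_*}(\deg_{A_*}(v)-t)_+$ strictly decreases (by at least a constant, or by $\Omega(t)$) at each branching step, so the process halts, and it halts only when this potential is $0$, i.e.\ max degree $\le t$. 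Once the bookkeeping is set up, the bound $|\cA|\le\exp(C\log d\cdot m/t)$ follows by multiplying the per-step branching factor $d^{O(1)}$ over the $O(m/t)$ steps.
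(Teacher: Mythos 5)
Your high-level plan — iteratively revealing high-degree vertices of $A_*$, branching on which side each lies, propagating forced consequences, and bounding the branching by $O(m/t)$ — is in the spirit of the paper's argument, and you correctly flag the delicate points (termination, independence of the family from $S$, budget accounting). However, the forcing rule you invoke is wrong: you claim that putting an odd vertex $v$ in $D_\ins$ (deciding $v\in S$) pushes all of $v$'s $A_*$-neighbours to $A_\out$. It does not — an odd vertex in $S$ has \emph{some} neighbour in $S^c$ but typically not all of them. The vertices that actually propagate are the complementary ones: an odd vertex in $S^c$ has \emph{all} neighbours in $S^c$ (since $\extB S\subset\Even$), and an even vertex in $S$ has all neighbours in $S$. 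These are exactly what the paper's set $W$ records ($W_{\Even}\subset\Even\cap A_*\cap S$, $W_{\Odd}\subset\Odd\cap A_*\cap S^c$); note they are \emph{disjoint} from $D_\ins\cup D_\out$, not contained in it.

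The budget accounting is also off. Your statement that ``each branching puts $v$ into $D$, hence $O(m/t)$ branchings'' only gives $O(m)$ branchings unless each branching consumes $\Omega(t)$ units of the budget $|D|\le m$, which you do not establish; and the alternative accounting ``each branching resolves $\Omega(t)$ vertices of $A_*$'' gives only $O(|A_*|/t)=O(dm/t)$, off by a factor of $d$. The paper's fix is to observe that for $w\in W$, \emph{all} $A_*$-neighbours of $w$ lie in $D$ (by the correct forcing rule above), and to pick, for each $S$, a maximal $W$ subject to $|A_*\cap N(W)|\ge t|W|$; this yields $t|W|\le|A_*\cap N(W)|\le|D|\le m$ in one stroke, so $|W|\le m/t$. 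One then only needs to count subsets of $A_*$ of size at most $m/t$, giving $\binom{3dm}{\lfloor m/t\rfloor}\le\exp(Cm\log d/t)$ — which is where the $\log d$ actually enters, not from a per-step branching factor. Defining $W_\ins:=W_{\Even}^+\cup(\Odd\cap N_t(A_*\setminus W^+))$ and symmetrically $W_\out$, and verifying by maximality of $W$ that these sets lie in $S$ and $S^c$ respectively, replaces your iterative algorithm with a one-shot extremal construction and sidesteps your concerns (i)--(iii) entirely.
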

\begin{proof}
We may assume $\OC{m}^*(A) \neq \emptyset$ as otherwise the
statement is trivial. Moreover, by Lemma~\ref{lem:eliminate isolated} we may
assume that $A_*$ has no isolated vertices.
For an indepedent set $W \subset A_*$ (i.e., a set containing no two adjacent vertices), write $W_{\Even}:=W\cap \Even$ and $W_{\Odd}:=W\cap \Odd$, and define
\begin{align*}
W_\ins &:= W_{\Even}^+ \cup\big(\Odd\cap N_t(A_*\setminus W^+)\big) ,\\
W_\out &:=  W_{\Odd}^+ \cup \big(\Even\cap N_t(A_*\setminus W^+)\big) .
\end{align*}
Here one should think of $W$ as recording the locations of a subset of even vertices in $A_*\cap S$ and odd vertices in $A_*\cap S^c$. We shall see that if this subset is chosen suitably then $W_\ins \subset S$ and $W_\out \subset S^c$.

Let $\cW$ denote the family of such sets $W$ having size at most $m/t$, and define
\[ \cB := \left\{ (A_\ins \cup W_\ins, A_\out \cup W_\out) ~:~ W \in \cW \right\} .\]
It is straightforward to check that every $B\in\cB$ is an approximation. Let us show that, for any $B\in \cB$, the maximal degree of the subgraph induced by $B_*$ is less than $t$, i.e., that $B_* \cap N_t(B_*) = \emptyset$.
Indeed, letting $W \in \cW$ be such that $B=(A_\ins \cup W_\ins,A_\out \cup W_\out)$ and noting that $B_* = A_* \setminus (W_\ins \cup W_\out)$ and $W_\ins \cup W_\out = W^+ \cup N_t(A_* \setminus W^+)$, we have
\[ B_* \cap N_t(B_*) \subset
   (A_* \setminus N_t(A_* \setminus W^+)\big) \cap N_t(A_* \setminus W^+) = \emptyset ,\]
Hence, applying Lemma~\ref{lem:eliminate isolated} to every element in $\cB$, we obtain a family $\cA$
of $t$-approximations such that $|\cA|\le|\cB|$
and $\cut(\cA)=\cut(\cB)$.

Next, we bound the size of $\cA$.
By assumption, there exists a set $S\in \OC{m}^*(A)$. Since $S$ is odd and since $A_*$ has no isolated vertices, we have
\[ A_* \subset (\Odd\cap A_* \cap S)^+ \cup (\Even \cap A_* \cap S^c)^+ ,\]
so that $|A_*| \le (2d+1)m \le 3dm$, and hence,
\[ |\cA| \le |\cB| \le |\cW| \le \sum_{k=0}^{\lfloor m/t \rfloor}
\binom{3dm}{k} \le (m/t + 1) (3edt)^{m/t} \le d^{Cm/t} .\]

It remains to show that $\cut_m^*(A) \subset \cut(\cB)$.
Let $S \in \cut_m^*(A)$ and let $W$ be a maximal subset of
$A_*$ among those satisfying $W_{\Even}\subset S$, $W_{\Odd}\subset S^c$ and $|A_* \cap N(W)| \ge t|W|$.
Observe that
\[ A_* \cap N(W) \subset (\Odd \cap A_* \cap S) \cup  (\Even \cap A_* \cap S^c) .\]
Thus, $t|W| \le |A_* \cap N(W)| \le m$ and, as $W$ is clearly an independent set, we have $W \in \cW$.
Now define $B_\ins := A_\ins \cup W_\ins$
and $B_\out := A_\out \cup W_\out$ so that $B:=(B_\ins,B_\out) \in \cB$.
We are left with showing that $S \in \cut(B)$, i.e., that $B_\ins \subset S$ and $B_\out \subset S^c$.
The two statements are very similar so we only show the former.
Since $S \in \cut(A)$, it suffices to show that $A_* \cap W_\ins \subset S$.
Let $v\in A_* \cap W_\ins$. If $v \in W_{\Even}^+$
then $v\in S$ by the definition of $W$ and since $S$ is odd. Otherwise, $v\in\Odd \cap (A_* \setminus W^+) \cap N_t(A_*\setminus W^+)$
so that $|A_* \cap N(W \cup\{v\})|\ge |A_* \cap N(W)|+t$.
Hence, $v \in S$ by the maximality of $W$.
\end{proof}

We are now ready to prove Lemma~\ref{lem:family-of-full-approx}.

\begin{proof}[Proof of Lemma~\ref{lem:family-of-full-approx}]
	Applying Lemma~\ref{lem:approx-algorithm} with $m_1:=m$ and $t_1=d$ to $A$, we obtain a family $\cB$ of $d$-approximation such that $|\cB| \le \exp(C \log d \cdot m/d)$ and $\OC{m}^*(A) \subset \cut(\cB)$.
	Applying Lemma~\ref{lem:approx-algorithm} with $m_2:=\lfloor n/d \rfloor$ and $t_2:=t$ to each $B \in \cB$, we obtain a collection of families of $t$-approximations.
	Taking the union over this collection, we obtain a family $\cA$ of
$t$-approximations such that $\cut^*_{\lfloor n/d \rfloor}(\cB) \subset \cut(\cA)$ and
	\[ |\cA| \le |\cB| \cdot \exp(C \log d \cdot n/dt) \le \exp(C \log d \cdot (m/d + n/dt)) .\]
	It remains to show that $\OC{n}(A) \subset \cut(\cA)$.
	To this end, let $S \in \cut_n(A)$ and note that $S \in \OC{m}^*(A) \subset \cut(\cB)$. Thus, there exists $B \in \cB$ such that $S \in \cut(B)$. Hence,
$S \in \OC{n}(B)$ and~\eqref{eq:tightness} now implies that
$S \in \OC{\lfloor n/d \rfloor}^*(B) \subset \cut(\cA)$, as required.
\end{proof}

\bibliographystyle{amsplain}
\bibliography{biblio}

\end{document}